\documentclass[11pt,a4paper,dvipsnames]{article}

\author{
    Fethi Bencherki$^{\star}$ \and  
    Anders Rantzer$^{\star}$
    \\[2ex]
}

\date{%
    $^{\star}$Department of Automatic Control, Lund University, Lund, Sweden \\
    \texttt{\{\href{mailto:fethi.bencherki@control.lth.se}{fethi.bencherki}, \href{mailto:anders.rantzer@control.lth.se}{anders.rantzer}\}@control.lth.se} 
}
\usepackage{wrapfig}
\usepackage{float}
\usepackage{tikz}
\usepackage{graphicx}
\usetikzlibrary{chains,shapes.multipart}
\usetikzlibrary{shapes,calc}
\usetikzlibrary{automata,positioning}
\usetikzlibrary{positioning, shapes, arrows, calc}
\tikzstyle{block} = [draw, rectangle, line width=0.7mm]

\usepackage{amsthm}
\newtheorem{problem}{Problem}

\newcommand{\qedblack}{\hfill$\blacksquare$}

 \usepackage[top=27mm, bottom=27mm, right=27mm, left=27mm]{geometry}
\usepackage[T1]{fontenc}
\usepackage[utf8]{inputenc}

\usepackage[english]{babel}      

\usepackage{lmodern}           
\usepackage{microtype}         
\usepackage{mathrsfs}          
\usepackage{amsthm, amsmath, amssymb, mathtools} 
\usepackage{bm, bbm}           
\usepackage{stmaryrd}          
\usepackage{accents}
\numberwithin{equation}{section}

\usepackage{cases,xfrac,gensymb} 
\usepackage{tocloft}             
\usepackage{listings}            
\usepackage{lipsum}              
\usepackage{titling}             
\usepackage{proof}               
\let\oldunderbar\underbar
\usepackage{sectsty}             
\let\underbar\oldunderbar
\usepackage{parskip}             
\usepackage{booktabs}            
\usepackage{enumerate}           
\usepackage{autobreak}           
\usepackage{xcolor}              
\usepackage{csquotes}            
\usepackage{adjustbox}
\usepackage[inline]{enumitem}

\usepackage[bf,sf]{titlesec}       
\usepackage[labelfont={bf,sf}]{caption} 
\usepackage{subcaption}            

\usepackage[
  maxnames=20,      
  giveninits=true,  
  isbn=false,        
]{biblatex}
\AtEveryBibitem{%
  \clearfield{month}%
}

\usepackage{graphicx}            
\usepackage{float}               
\usepackage{tikz}                
\usetikzlibrary{external}

\usepackage{pgfplots}
\pgfplotsset{compat=1.18}
\usepgfplotslibrary{colormaps}
\usepackage{pgfplotstable}
\usepgfplotslibrary{fillbetween}

\let\mathbf\bm
\DeclareMathAlphabet\mathbfcal{OMS}{cmsy}{b}{n} 

\makeatletter
\def\th@plain{
  \thm@headfont{\normalfont\sffamily\bfseries}%
  \itshape
}
\def\th@definition{
  \thm@headfont{\normalfont\sffamily\bfseries}%
  \thm@notefont{\normalfont\sffamily\bfseries}%
}
\theoremstyle{myStyle1}
\newtheorem{theorem}{Theorem}
\newtheorem{lemma}{Lemma}
\newtheorem{proposition}{Proposition}

\newtheorem{assumption}{Assumption}

\theoremstyle{myStyle2}

\newtheorem{remark}{Remark}
\newtheorem{example}{Example}
\providecommand{\keywords}[1]{\textbf{\textsf{Keywords.}} #1}

\definecolor{Red}{rgb}{1, 0, 0}
\definecolor{BlueIMT}{RGB}{0,42,72}
\definecolor{ForestGreen}{rgb}{0.13, 0.55, 0.13}
\definecolor{Gray}{rgb}{0.66, 0.66, 0.66}
\definecolor{Green}{rgb}{0.0, 0.5, 0.0}
\definecolor{MidnightBlue}{rgb}{0.098, 0.098, 0.439}
\definecolor{Orange}{rgb}{0.93, 0.53, 0.18}

\definecolor{color1}{RGB}{68,119,170}   
\definecolor{color2}{RGB}{102,204,238}  
\definecolor{color3}{RGB}{34,136,51}    
\definecolor{color4}{RGB}{17,119,51}    
\definecolor{color5}{RGB}{204,187,68}   
\definecolor{color6}{RGB}{221,204,119}  
\definecolor{color7}{RGB}{204,102,119}  
\definecolor{color8}{RGB}{136,34,85}    
\definecolor{color9}{RGB}{170,51,119}   
\definecolor{color10}{RGB}{102,102,102} 
\definecolor{color11}{RGB}{50,50,50} 

\usepackage{hyperref} 
\hypersetup{%
    hidelinks,
    hypertexnames = true,
    plainpages    = false,
    colorlinks = true,
    urlcolor   = MidnightBlue,
    linkcolor  = MidnightBlue,
    citecolor  = ForestGreen,
}
\usepackage[capitalize,nameinlink]{cleveref}

\newcommand{\overbar}[1]{\mkern 1.5mu\overline{\mkern-1.5mu#1\mkern-1.5mu}\mkern 1.5mu}
\newboolean{showcomments}
\setboolean{showcomments}{true}
\newcommand{\fethi}[1]{\ifthenelse{\boolean{showcomments}}
	{\textcolor{blue}{(Fethi says: #1)}}{}}
\newcommand{\tomas}[1]{\ifthenelse{\boolean{showcomments}}
	{\textcolor{orange}{(Tomas says: #1)}}{}}

\usepackage{comment}
\title{\sffamily\bfseries Minimax~Optimal~Dual~Control~of~Positive~Systems: An~Exact~Solution~for~Scalar~Input-Sign~Uncertainty}

\author{
    Fethi Bencherki$^{\star}$ \and  
    Tomas J. Meijer$^{\star}$  \and
    Anders Rantzer$^{\star}$
    \\[2ex]
}

\date{%
    $^{\star}$Department of Automatic Control, Lund University, Lund, Sweden \\
    \texttt{\{\href{mailto:fethi.bencherki@control.lth.se}{fethi.bencherki},\href{tomas.meijer@control.lth.se}{tomas.meijer},\href{mailto:anders.rantzer@control.lth.se}{anders.rantzer}\}@control.lth.se} 
}

\begin{document}

\maketitle

\begin{abstract}
    While recent advances in minimax dual control have led to exact solutions for uncertain general linear time-invariant systems as well as (sub)optimal dual controllers, corresponding results for linear positive systems are still lacking. This paper aims to fill this gap and thereby pave the way toward scalable dual control algorithms. We study the general minimax optimal dual control problem for positive linear systems with unknown dynamics and reformulate it as a standard zero-sum dynamic game. By allowing randomized control inputs, we solve the corresponding Bellman equation exactly for the scalar case with sign uncertainty in the input. This yields an implicit dual control policy that is optimal both in terms of cost and \(\ell_1\)-gain.
The optimal dual policy uses exploration in a specific region of the hyperstate space to conduct optimal probing. Outside this exploration regime, the controller reduces to a deterministic certainty equivalence policy, indicating that sufficient information has been obtained to identify the correct input direction. In addition, these results allow us to analyze fundamental limitations of minimax dual control for positive systems and provide a foundation for more general dual control problems for positive systems for future work.

\end{abstract}

\keywords{Minimax adaptive control, positive systems, dual control, game-theoretic control, dynamic programming, Bellman equation, data-driven control}

\section{Introduction}
Modern control applications are increasingly complex, large-scale, and interconnected, see, for instance, the roadmap in~\cite{Annaswamy2023}, which presents many new challenges also from a control perspective. In particular, it is often difficult to derive accurate models for these systems. This has led to an increase in interest in fields such as system identification~\cite{Ljung1999,Overschee1996} and, more recently, data-driven control, see, e.g.,~\cite{Markovsky2008,Coulson2019,vanWaarde2023,Berberich2021,Meijer2025-fd-wfl-arxiv}. Both of these fields, however, are built on the premise that data have been collected a priori, which is not always possible and over time changes in the physical system may render previously collected data less representative of the current system dynamics. Adaptive control presents a promising avenue to deal with these challenges by developing controllers that adapt to a continuous stream of incoming data~\cite{AAstrom2013}. 

The design of such learning-based controllers, however, presents its own unique challenge: The controller needs to simultaneously learn and regulate the plant. The term \emph{dual control} was introduced by Feldbaum~\cite{Feldbaum1960} to capture the need for controllers to manage this exploration--exploitation tradeoff, which has since been studied extensively; see, for example, the survey papers~\cite{Wittenmark1995,Mesbah2018}. Interestingly, the tradeoff between exploration and exploitation plays a crucial role in a wide range of scientific fields, including behavioral ecology~\cite{Macarthur1966}, neuroscience~\cite{Cohen2007}, and computer science~\cite{Recht2019}. Dual control problems are often addressed through dynamic programming~\cite{Bellman57,Bertsekas2007} using a hyperstate combining the system's physical state with an information state, which describes what has been learned so far. Unfortunately, exact solutions to the resulting Bellman equation are difficult to obtain, pose computational challenges, and often need to be approximated, even in cases where information state allows a finite dimensional representation. Nevertheless, several attempts have been made to derive exact solutions for special cases of dual control, see, e.g.,~\cite{Sternby1976,AAstrom1986,Bernhardsson1989,Kulcsar1996}.

More recently, significant advances in dual control have been made by adopting a minimax optimal control perspective. The minimax approach, which can be seen as a natural extension of the $\mathcal{H}_{\infty}$ control problem~\cite{Zames1981,Zhou1996} to an adaptive setting, accounts both worst-case disturbances and uncertain parameters~\cite{Cusumano1988,Ioannou1988,Vinnicombe2004,Megretski2004}. The minimax dual control paradigm was first introduced in~\cite{Didinsky1994} for linear systems, and in~\cite{Pan1998} for nonlinear systems. Using a hyperstate consisting of the current state and a running covariance matrix that efficiently compressing past measurements, the minimax adaptive control problem can be reformulated as a two player zero-sum dynamic game, as was recently shown in~\cite{Rantzer2021a}. The resulting dynamic game can be addressed using standard dynamic programming techniques. It turns out that, when allowing for input randomization to facilitate exploration by the controller, the resulting Bellman equation can be solved exactly, leading to an explicit dual control policy for the case with input uncertainty in the input~\cite{Rantzer2025a}. These results have subsequently helped to develop suboptimal dual controllers also for more general uncertainty, see, e.g.,~\cite{Rantzer2025b,Rantzer2026}. Related work with suboptimal dual controllers has been published in, e.g.,~\cite{Kjellqvist2022a,Kjellqvist2022b,Orlov2018,Rantzer2020}. In parallel, a sizable body of research has focused on performance guarantees in terms of regret-based growth rates for dual controllers, see, e.g.,~\cite{Jedra2022,Dean2018,Tsiamis2023,matni2019self,Lee2024,Simchowitz2020,Ziemann2020,Cassel2020}. Fundamental limitations of minimax dual control were investigated in~\cite{Vinnicombe2004,Megretski2003,Rantzer2025a}. Specifically, Vinnicombe conjectured a lower bound on the best achievable $\ell_2$-gain from disturbances to states for a scalar system with sign uncertainty in the input. This conjecture was later shown to be true in~\cite{Rantzer2025a}.

In this paper, we consider minimax optimal dual control for \emph{positive} systems, i.e., systems in which the state only takes non-negative values. This class of systems is particularly important due to its wide range of applications in, for instance, social sciences, epidemiology, biology, econometrics, and stochastic processes. By exploiting their inherent structure, results for linear systems can often be formulated in terms of \emph{computationally efficient} linear programs as opposed to semi-definite programs for general linear systems. This leads to scalable solutions that are tractable also for modern, large-scale control applications. For further reading on positive systems, we refer the interested reader to the survey~\cite{Rantzer2021b}. Interestingly, it was recently shown that exact solutions can be obtained for various model-based optimal control problems involving positive systems, see, for instance,~\cite{Rantzer2022,Ohlin2025,blanchini2023optimal} and, for minimax problems,~\cite{Gurpegui2023,Gurpegui2026}. Control of positive systems without knowledge of the model has also been addressed from the perspective of both offline data-driven control, see, e.g.,~\cite{miller2023data,shafai2022data,Miller2026,Iwata2025}, and adaptive control~\cite{bencherki2025adaptive,bencherki2024data,Bencherki2026}. We build on these results and extend them to the dual control setting to develop and analyze dual controllers whose goal is to balance exploration and exploitation.

\subsection{Contributions}
Our paper presents the following main contributions:
\begin{itemize}
    \item[\textit{(C1).}]  We present the general optimal dual control problem for positive systems, in which we allow for input randomization to facilitate exploration. The problem is subsequently reformulated as a standard two player zero-sum dynamic game that can be addressed using minimax dynamic programming techniques.
\item[\textit{(C2).}] For the scalar case with sign uncertainty in the input, we derive an \emph{exact} solution to the Bellman equation corresponding to the reformulated problem. This solution yields an explicit dual controller with provable, non-asymptotic optimality guarantees in the form of optimal game value and optimal \(\ell_1\)-gain from disturbance to state.
    \item[\textit{(C3).}] We investigate fundamental limitations of minimax dual control for positive systems by characterizing the best achievable induced \(\ell_1\)-gain from positive disturbances to the state under both the optimal dual controller and the certainty equivalence controller. This allows us to derive results similar to those in~\cite{vinnicombe2004examples}.
\item[\textit{(C4).}]  The paper solves the exact Bellman equation for scalar systems with sign
uncertainty, providing an \emph{exemplary} setting for understanding the
structure of optimal dual control for positive systems more broadly. The
techniques developed here naturally suggest extensions to broader model
classes, thereby laying the groundwork for future research on exact and
approximate dual control of positive systems..
\end{itemize}

\subsection{Organization}
Section~\ref{prelim} provides the notation and summarizes previous work, the challenges it raises, and the motivation for this paper. Section~\ref{gen-framework} formulates the general minimax dual control framework considered in this paper, recasts it as a zero-sum dynamic game, and develops its solution via minimax dynamic programming. Section~\ref{bellman-eq} presents our main results, namely an exact explicit solution to the Bellman equation for the case of sign input uncertainty in scalar positive systems. Section~\ref{fund-limit} examines fundamental limitations of minimax dual control for positive systems. 
Section~\ref{concl} concludes the paper and proposes possible directions for future work. Preliminary lemmas, their proofs, and the proofs of the main results are deferred to Appendices~\ref{appenA} and~\ref{main-res}. Basic facts regarding minimax dynamic programming are given in Appendix~\ref{app:minimax-dp}.

\section{Preliminaries and previous work}\label{prelim}
\subsection{Notation}
We let \(\mathbb{R}^n_+\) denote the nonnegative orthant in dimension \(n\). The vector \(\mathbf{1}\) denotes the all-ones vector, with dimension understood from context. For any vector \(x\), \(|x|\) denotes the elementwise absolute value. For a vector \(x\), \(\max x\) and \(\min x\) denote its largest and smallest entries, respectively. For a set \(\mathcal M\), \(|\mathcal M|\) denotes its cardinality. If \(z_t^{(A,B)} \in \mathbb{R}_+\) is defined for every pair \((A,B)\in\mathcal M\), we write
$
Z_t \coloneqq \{z_t^{(A,B)}\}_{(A,B)\in\mathcal M}
$
for the indexed collection of nonnegative real numbers \(z_t^{(A,B)}\), one for each \((A,B)\in\mathcal M\). Inequalities between vectors and matrices are understood elementwise.
\subsection{Minimax adaptive control for positive systems}\label{minimax-adapt-contr}
The work in~\cite{Bencherki2026} considered discrete-time linear positive systems of the form
\begin{align}\label{positive-system}
    x_{t+1} = A x_t + B u_t + w_t,\qquad x_0 \in \mathbb{R}_+^n,
\end{align}
where the dynamics are parameterized by \(A \in \mathbb{R}_+^{n \times n}\) and
\(B \in \mathbb{R}^{n \times m}\). 
The inputs satisfy
\[
\mathcal{U}(x) \subseteq \{u \in \mathbb{R}^m : Ax + Bu \geq 0,\ \text{for all } x \in \mathbb{R}_+^n\}.
\]
Thus, at each time \(t \in \mathbb{N}\), the control action satisfies \(u_t \in \mathcal{U}(x_t)\). The disturbance \(w\) is restricted to a set \(\mathcal{W}(x,u)\) that preserves state positivity pointwise in time, i.e.,
\[
\mathcal{W}(x,u) := \{w \in \mathbb{R}^n : w \ge -(Ax+Bu),\ \text{for all } x \in \mathbb{R}_+^n,\ \text{for all } u \in \mathcal{U}(x)\}.
\]
Note that the set of admissible inputs depends on the current state, while the set of admissible disturbances depends on both the current state and the input. This is consistent with the minimax framework, in which the disturbance \(w_t\) is chosen after the controller has selected the input \(u_t\). For further details, we refer the interested reader to~\cite{Bencherki2026}, where concrete choices of the admissible input set \(\mathcal{U}(x)\) are also discussed.

It is assumed that the true unknown positive system $(A_\star,B_\star)$ belongs to a compact model class \(\mathcal M\) of admissible system matrices \((A,B)\), which captures the parametric uncertainty in the dynamics.
The deterministic minimax adaptive control problem studied in~\cite{Bencherki2026} is
\[
\begin{aligned}
J_\ast(x_0)= {}&
\inf_{\mu}\;
\sup_{\substack{(A,B)\in\mathcal M\\
w\ge -(Ax+Bu)\\
T\in\mathbb N}}
\sum_{t=0}^{T}
\bigl(s^\top x_t+r^\top u_t-\gamma^\top|w_t|\bigr)
\\
&\text{s.t.}\qquad
x_{t+1}=Ax_t+Bu_t+w_t,\quad x_0\in\mathbb R_+^n,\\
&\phantom{\text{s.t.}\qquad}
u_t=\mu_t\bigl(x_0,\ldots,x_t;\,u_0,\ldots,u_{t-1}\bigr),\quad\forall t\in\mathbb N,\\
&\phantom{\text{s.t.}\qquad}
u_t\in\mathcal U(x_t),\quad \text{for all } t\in\mathbb N.
\end{aligned}
\]
This problem was reformulated by introducing a historical data variable for each \((A,B)\in\mathcal M\) and a new disturbance $v_t$, leading to a new dynamics described by
\[
\left\{
\begin{aligned}
x_{t+1} &= v_t,\\
z_{t+1}^{(A,B)}
&=
z_t^{(A,B)}
+
\gamma^\top |v_t-Ax_t-Bu_t|,
\qquad
z_0^{(A,B)}=0.
\end{aligned}
\right.
\]
This yields the equivalent standard minimax dynamic game between two
opposing players, i.e., the controller \(\eta\) and the adversary \(v\), given by
\begin{equation}\label{eq:minimax-dyn-game}
\begin{aligned}
{}&\inf_{\eta}\;
\sup_{\substack{v \ge 0\\ T \in \mathbb{N}}}
\quad
\sum_{t=0}^{T} \bigl(s^\top x_t + r^\top u_t\bigr)
-
\min_{(A,B)\in\mathcal M} z_{T+1}^{(A,B)}
\\[1mm]
&\text{s.t.}\quad
x_{t+1} = v_t,
\qquad
x_0 \in \mathbb{R}_+^n,
\\
&\phantom{\text{s.t.}\quad}
z_{t+1}^{(A,B)}
=
z_t^{(A,B)}
+
\gamma^\top |v_t - A x_t - B u_t|,
\qquad
z_0^{(A,B)} = 0,
\\
&\phantom{\text{s.t.}\quad}
Z_t \coloneqq \{z_t^{(A,B)}\}_{(A,B)\in\mathcal M},
\qquad
Z_0 \coloneqq \{0\}_{(A,B)\in\mathcal M},
\\
&\phantom{\text{s.t.}\quad}
u_t = \eta(x_t,Z_t) \in \mathcal{U}(x_t),
\qquad
\text{for all }t\in\mathbb N.
\end{aligned}
\end{equation}
This reformulated problem was then addressed using standard minimax dynamic
programming, yielding solutions to the corresponding Bellman inequality
and, consequently, stabilizing adaptive controllers.
\subsection{Motivation and challenges}\label{motiv}
The work in~\cite{Bencherki2026} focused on solving
Problem~\ref{prob:l1-robust-control} under deterministic inputs. This makes
value iteration quickly intractable when seeking an exact solution via minimax
dynamic programming. This intractability stems from the inherent difficulty
of balancing exploration and exploitation, which present two competing
objectives and typically result in a nonconvex optimization problem. To illustrate this, we consider the following example.
\begin{example}[The exploration--exploitation tradeoff for positive systems]
Consider
\[
x_{t+1}=ax_t+\sigma u_t+w_t,\qquad \sigma\in\{+,-\},
\]
where \(a\ge0\) is known, while the input direction \(\sigma\) is unknown. The input set is taken to be \( \mathcal{U}(x_t)=\{u_t\in\mathbb R:\ |u_t|\leq ax_t\}\),
which preserves positivity of the next state for both possible input
directions. We take \(s=1\) and \(r=0\) as cost parameters. The two model-specific histories evolve
according to
\[
z_+^{(+)}=z^{(+)}+|v-ax-u|,
\qquad
z_+^{(-)}=z^{(-)}+|v-ax+u|,
\qquad x_+=v\geq0.
\]
The value iteration is initialized by
$
V_0(x,z^{(+)},z^{(-)})
\coloneqq
-\gamma\min\{z^{(+)},z^{(-)}\},
$
and proceeds recursively according to
\[
V_{k+1}(x,z^{(+)},z^{(-)})
=
\min_{|u|\leq ax}\max_{v\geq0}
\left[
x+
V_k\left(
v,\,
z^{(+)}+|v-ax-u|,\,
z^{(-)}+|v-ax+u|
\right)
\right].
\]
The first iterate $V_1$ is finite if and only if \(\gamma\geq1\), and in that case
\begin{align*}
V_1(x,z^{(+)},z^{(-)})
&=
\min_{|u|\leq ax}\max_{v\geq0}
\left[
x-\gamma
\min\Bigl\{
z^{(+)}+|v-ax-u|,
z^{(-)}+|v-ax+u|
\Bigr\}
\right]  \\
&=
x-\gamma\min\{z^{(+)},z^{(-)}\}.
\end{align*}
The second iterate $V_2$ is finite if and only if \(\gamma\geq1\), and in that case
\begin{multline*}\label{eq1}
	V_2(x,z^{(+)},z^{(-)})
	=
	\min_{\lvert u\rvert\leq ax}\max_{v\geq0}
	\left[
	x+v-\gamma
	\min\Bigl\{
	z^{(+)}+\lvert v-ax-u\rvert,\;
	z^{(-)}+\lvert v-ax+u\rvert
	\Bigr\}
	\right] \\
	=
	x+ax+
	\min_{\lvert u\rvert\leq ax}
	\max\Bigl\{
	\underbrace{u-\gamma z^{(+)}}_{V_{+}(u)},\;
	\underbrace{-u-\gamma z^{(-)}}_{V_{-}(u)}
	\Bigr\} \\
	=
	\max\left\{
	(1+a)x-\tfrac{\gamma}{2}\bigl(z^{(+)}+z^{(-)}\bigr),\;
	x-\gamma z^{(+)},\;
	x-\gamma z^{(-)}
	\right\},
\end{multline*}
Finally, using the expression for \(V_2\), the third iterate $V_3$ is
\begin{multline*}
V_3(x,z^{(+)},z^{(-)})
=
\min_{|u|\leq ax}\max_{v\geq0}
\max\Biggl\{
x+(1+a)v
-\frac{\gamma}{2}
\Bigl(
z^{(+)}+z^{(-)}
+|v-ax-u|+|v-ax+u|
\Bigr),\\
x+v-\gamma\bigl(z^{(+)}+|v-ax-u|\bigr),\;
x+v-\gamma\bigl(z^{(-)}+|v-ax+u|\bigr)
\Biggr\}.
\end{multline*}
The third iterate $V_3$ is finite if and only if \(\gamma\geq1+a\), and in that case
the maximization over \(v\geq0\) gives
\begin{multline*}
	V_3(x,z^{(+)},z^{(-)})
	=(1+a)x+
	\min_{|u|\leq ax}
	\max\Biggl\{ 
	\underbrace{a^2x 
		-\frac{\gamma}{2}\bigl(z^{(+)}+z^{(-)}\bigr)
		-\bigl(\gamma-1-a\bigr)|u|}_{V_{\mathrm{avg}}(u)},\\
	\underbrace{u-\gamma z^{(+)}}_{V_{+}(u)},\;
	\underbrace{-u-\gamma z^{(-)}}_{V_{-}(u)}
	\Biggr\}.
\end{multline*}
For \(\gamma>1+a\), the minimization over \(u\) in \(V_3\) is nonconvex due to the concave term \(-(\gamma-1-a)\lvert u\rvert\). In contrast, at the second step the controller minimizes the maximum of two affine functions of \(u\), as shown in the left panel of Figure~\ref{fig:positive-exploration-three-steps}. At the third step, the objective becomes the maximum of two affine functions and one concave function, as shown in the right panel of Figure~\ref{fig:positive-exploration-three-steps}. Since the concave term is largest at \(u=0\), the minimizer of the pointwise maximum moves away from zero, causing the controller to probe the system. To deal with the non-convexity, we will employ input randomization which helps to convexify the problem.

\begin{figure}[tbh]
\centering
\begin{tikzpicture}[
    >=latex,
    scale=1.0,
    axis/.style={->, line width=0.8pt},
    bluecurve/.style={blue, line width=1.4pt},
    orangecurve/.style={orange!85!black, line width=1.4pt},
    redcurve/.style={red, line width=1.4pt},
    blackcurve/.style={black, line width=1.0pt},
    constraint/.style={gray, dashed, line width=0.9pt},
    feasible/.style={gray!15},
    dot/.style={circle, fill, inner sep=1.5pt},
    legendbox/.style={
        draw,
        rounded corners,
        fill=white,
        inner xsep=8pt,
        inner ysep=5pt,
        font=\scriptsize
    }
]

\node[legendbox, anchor=north] at (3,4.8) {
	\begin{tikzpicture}[
		baseline,
		every node/.style={font=\scriptsize},
		bluecurve/.style={blue, line width=1.4pt},
		orangecurve/.style={orange!85!black, line width=1.4pt},
		redcurve/.style={red, line width=1.4pt},
		blackcurve/.style={black, line width=1.0pt},
		constraint/.style={gray, dashed, line width=0.9pt},
		feasible/.style={gray!15}
		]
		\draw[bluecurve] (0,0) -- (0.45,0);
		\node[anchor=west] at (0.60,0) {$V_{+}(u),\,V_{-}(u)$, $z^{(+)}=z^{(-)}=0$};
		
		\draw[orangecurve] (5.15,0) -- (5.60,0);
		\node[anchor=west] at (5.75,0) {$V_{+}(u),\,V_{-}(u)$, $z^{(+)}\neq z^{(-)}$, $z^{(+)},z^{(-)}\neq 0$};
		
		\draw[redcurve] (0,-0.50) -- (0.45,-0.50);
		\node[anchor=west] at (0.60,-0.50) {$V_{\mathrm{avg}}(u)$};
		
		\draw[blackcurve] (3.20,-0.50) -- (3.65,-0.50);
		\node[anchor=west] at (3.85,-0.50) {$\max\{V_{+},V_{-},V_{\mathrm{avg}}\}$};
		
		\fill[feasible] (8.35,-0.60) rectangle (8.80,-0.40);
		\draw[constraint] (8.35,-0.65) -- (8.35,-0.35);
		\draw[constraint] (8.80,-0.65) -- (8.80,-0.35);
		\node[anchor=west] at (8.95,-0.50) {feasible interval};
	\end{tikzpicture}
};

\begin{scope}[shift={(0,0)}]

    \pgfmathsetmacro{\Cblue}{0.45}
    \pgfmathsetmacro{\Corange}{1.25}
    \pgfmathsetmacro{\alphap}{0.25}
    \pgfmathsetmacro{\alpham}{0.95}
    \pgfmathsetmacro{\ustar}{0.5*(\alphap-\alpham)}
    \pgfmathsetmacro{\ystar}{\Corange-0.5*(\alphap+\alpham)}
    \pgfmathsetmacro{\umax}{1.35}

    \fill[feasible] (-\umax,0) rectangle (\umax,2.65);
    \draw[constraint] (-\umax,0) -- (-\umax,2.65);
    \draw[constraint] (\umax,0) -- (\umax,2.65);
    \node[gray,font=\scriptsize] at (-\umax,-0.28) {\(-ax\)};
    \node[gray,font=\scriptsize] at (\umax,-0.28) {\(ax\)};

    \draw[axis] (-2.35,0) -- (2.45,0) node[right] {$u$};
    \draw[axis] (0,-0.15) -- (0,2.8);

    \draw[bluecurve, domain=-2:0, samples=2]
        plot (\x,{\Cblue-\x});
    \draw[bluecurve, domain=0:2, samples=2]
        plot (\x,{\Cblue+\x});

    \draw[orangecurve, domain=-2:\ustar, samples=2]
        plot (\x,{\Corange-\x-\alpham});
    \draw[orangecurve, domain=\ustar:2, samples=2]
        plot (\x,{\Corange+\x-\alphap});

    \node[dot, blue] at (0,\Cblue) {};
    \node[dot, orange!85!black] at (\ustar,\ystar) {};

    \node at (0,-0.72) {(a) \(k=2\)};

\end{scope}

\begin{scope}[shift={(6.0,0)}]

    \pgfmathsetmacro{\L}{0.35}
    \pgfmathsetmacro{\C}{2.00}
    \pgfmathsetmacro{\beta}{0.55}
    \pgfmathsetmacro{\uint}{(\C-\L)/(1+\beta)}
    \pgfmathsetmacro{\nuint}{-\uint}
    \pgfmathsetmacro{\yint}{\L+\uint}
    \pgfmathsetmacro{\umax}{1.35}

    \fill[feasible] (-\umax,0) rectangle (\umax,2.65);
    \draw[constraint] (-\umax,0) -- (-\umax,2.65);
    \draw[constraint] (\umax,0) -- (\umax,2.65);
    \node[gray,font=\scriptsize] at (-\umax,-0.28) {\(-ax\)};
    \node[gray,font=\scriptsize] at (\umax,-0.28) {\(ax\)};

    \draw[axis] (-2.35,0) -- (2.45,0) node[right] {$u$};
    \draw[axis] (0,-0.15) -- (0,2.8);

    \draw[bluecurve, domain=\nuint:0, samples=2]
        plot (\x,{\L-\x});
    \draw[bluecurve, domain=0:\uint, samples=2]
        plot (\x,{\L+\x});

    \draw[redcurve, domain=-2:2, samples=100]
        plot (\x,{\C-\beta*abs(\x)});

    \draw[blackcurve, domain=-2:\nuint, samples=2]
        plot (\x,{\L-\x});
    \draw[blackcurve, domain=\nuint:0, samples=50]
        plot (\x,{\C-\beta*abs(\x)});
    \draw[blackcurve, domain=0:\uint, samples=50]
        plot (\x,{\C-\beta*abs(\x)});
    \draw[blackcurve, domain=\uint:2, samples=2]
        plot (\x,{\L+\x});

    \node[dot, black] at (\nuint,\yint) {};
    \node[dot, black] at (\uint,\yint) {};

    \node[dot, blue] at (0,\L) {};

    \node at (0,-0.72) {(b) \(k=3\)};

\end{scope}

\end{tikzpicture}

\caption{Optimal exploration in three steps for the scalar positive-system example. The optimal input is obtained by minimizing, over the admissible interval \(-ax\leq u\leq ax\) shown by the shaded region, the maximum of the functions appearing in the value iteration. \textbf{(a):} At \(k=2\), the objective is the maximum of two affine functions of \(u\). Without prior information, \(z^{(+)}=z^{(-)}\), the corresponding functions \(V_{+}(u)\) and \(V_{-}(u)\) are symmetric and are minimized at \(u=0\). Prior information shifts these functions, shown in orange, corresponding to \(u-\gamma z^{(+)}\) and \(-u-\gamma z^{(-)}\); the unconstrained minimizer is \(u^\ast=\tfrac{\gamma}{2}\bigl(z^{(+)}-z^{(-)}\bigr)\), and the resulting input is its projection onto \([-ax,ax]\). \textbf{(b):} At \(k=3\), for \(\gamma>1+a\), the objective is the pointwise maximum of \(V_{\mathrm{avg}}(u)\), shown in red, and \(V_{+}(u)\), \(V_{-}(u)\), shown in black. Since \(V_{\mathrm{avg}}(u)\) contains the concave term \(-(\gamma-1-a)\lvert u\rvert\), the minimization over \(u\) is generally nonconvex and may move the minimizer away from zero, thereby inducing exploration.}
\label{fig:positive-exploration-three-steps}
\end{figure}
\end{example}
To solve the exact Bellman equation, one must synthesize a policy that balances exploration and exploitation. Compared with general linear systems, dual control for positive systems poses additional challenges, yet it also benefits from additional structure:
\begin{itemize}
	\item[\textit{(1)}] The control input \(u_t\) must balance exploration and exploitation \emph{while preserving positivity}.
\item[\textit{(2)}] Once a dual policy has been synthesized, the intrinsic structure of positive systems often allows for efficient computation and implementation.
\end{itemize}
In the next section, we introduce a new minimax dual control formulation that
allows for input randomization, thereby convexifying the value iteration
steps and enabling an exact solution, as will be shown in later sections.
\section{General framework for minimax dual control for positive systems}\label{gen-framework}
In this section, we develop the general framework that extends the optimal control problem in Section~\ref{minimax-adapt-contr} by allowing randomized inputs. In the same spirit as~\cite{Bencherki2026}, we then present a reformulation, analogous to~\cite{Rantzer2021a}, that moves the dependence on the unknown system parameters \((A,B)\) to the terminal cost. We start by formulating the general problem.
\subsection{Minimax dual control under randomized inputs}\label{pbf}
In this paper, we extend the minimax adaptive control problem considered in~\cite{Bencherki2026} by allowing input randomization. To this end, we modify the optimal control problem from Section~\ref{prelim} so that it minimizes the expected value, leading to the following optimal dual control problem.
\begin{problem}\label{prob:l1-robust-control}
Find a causal potentially randomized policy \(\mu\) that solves
\[
\begin{aligned}
J_\ast(x_0)= {}&
\inf_{\mu}\;
\sup_{\substack{(A,B)\in\mathcal M\\
w\ge -(Ax+Bu)\\
T\in\mathbb N}} \mathbb{E}
\sum_{t=0}^{T}
\bigl(s^\top x_t+r^\top u_t-\gamma^\top|w_t|\bigr)
\\
&\mathrm{s.t.}\qquad
x_{t+1}=Ax_t+Bu_t+w_t,\quad x_0\in\mathbb R_+^n,\\
&\phantom{\mathrm{s.t.}\qquad}
u_t=\mu_t\bigl(x_0,\ldots,x_t;\,u_0,\ldots,u_{t-1}\bigr),\quad\text{ for all } t\in\mathbb N,\\
&\phantom{\mathrm{s.t.}\qquad}
u_t\in\mathcal U(x_t),\quad \mathrm{almost~surely}\text{, for all } t\in\mathbb N.
\end{aligned}
\]
Here, the policy $\mu$ generates, at time $t\in\mathbb{N}$, a (possibly) random input $u_t$ based on $u_0,\hdots,u_{t-1}$ and $x_0,\hdots,x_t$.
\end{problem}
In contrast to the minimax problem considered in~\cite{Bencherki2026}, Problem~\ref{prob:l1-robust-control} allows policies that generate randomized inputs, while imposing the constraint that \(u_t\in\mathcal U(x_t)\) almost surely. This requirement is enforced with probability one, as stated next.
\begin{assumption}\label{asm:UU}
	Inputs $u_t$ are positivity preserving almost surely, i.e., $u_t \in \mathcal{U}(x_t)$ a.s., where
	$
	\mathcal{U}(x) \subseteq \{u \in \mathbb{R}^m : Ax + Bu \geq 0,\ \forall x \in \mathbb{R}_+^n\}.
	$
\end{assumption}
The same requirement is imposed on the disturbance.
\begin{assumption}\label{asm:WW}
	The disturbance $w_t$ is positivity preserving almost surely, i.e., $w_t \in \mathcal{W}(x_t,u_t)$ a.s., where
	$
	\mathcal{W}(x,u)\coloneqq \{w\in\mathbb{R}^{n}\mid Ax+Bu+w\geq 0,\ \forall x\in\mathbb{R}_+^n,\ \forall u\in\mathcal{U}(x)\}.
	$
\end{assumption}
We require the worst case expected stage cost to be nonnegative, as stated in the following.
\begin{assumption}[Positivity of the expected stage cost]\label{ass:stage cost-pos}
The stage cost weights satisfy \(s\in\mathbb{R}^n_+\), \(r\in\mathbb{R}^m\), and \(\gamma\in\mathbb{R}^n_+\). Moreover, for every \((A,B)\in\mathcal M\), \(t\in\mathbb N\), \(x_t\in\mathbb{R}^n_+\), and \(u_t\in\mathcal U(x_t)\), we assume that
\(\max_{w_t\in\mathcal W(x_t,u_t)}\mathbb E\!\left[s^\top x_t+r^\top u_t-\gamma^\top |w_t|\right]\geq0\).
\end{assumption}

\begin{remark}
Since \(u_t\in\mathcal U(x_t)\) almost surely implies \(Ax_t+Bu_t\geq0\) almost surely, we have \(0\in\mathcal W(x_t,u_t)\) almost surely. Therefore, Assumption~\ref{ass:stage cost-pos} reduces to \(\mathbb E\!\left[s^\top x_t+r^\top u_t\right]\geq0\).
\end{remark}

\subsection{Reformulated problem}
To reformulate Problem~\ref{prob:l1-robust-control} as a standard minimax zero-sum dynamic game, we make two important changes: \begin{enumerate*}[label=\textit{(\alph*)}]
    \item we introduce a new auxiliary disturbance $v_t$ through which we remove $w_t$ and, more importantly, $A$ and $B$ from the system dynamics, and
    \item we introduce a model-specific history variable $z^{(A,B)}_{t}$ that ``compresses'' the past measurements and allows us to replace the original disturbance $w_t$ in the cost function by a new term that depends on $A$, $B$, and the new disturbance $v_t$.
\end{enumerate*}

First, we introduce the new auxiliary disturbance $v_t\geq 0$. Note that, since the disturbance $w_t$ enters additively and affects all states directly, Assumption~\ref{asm:WW} means that $w_t$ can drive the next state $x_{t+1}$ to any non-negative value. Hence, taking the supremum over $\{w_t\}_{t\in\mathbb{N}}$, with $w_t\in\mathcal{W}(x_t,u_t)$ for all $t\in\mathbb{N}$, for the model $x_{t+1}=Ax_t+Bu_t+w_t$ is equivalent to taking the supremum over $\{v_t\}_{t\in\mathbb{N}}$, with $v_t\in\mathbb{R}_{+}^{n}$ for all $t\in\mathbb{N}$, for the model 
\begin{equation}\label{eq:reformulated-model}
    x_{t+1} = v_t.
\end{equation}
Note that the new disturbance satisfies $w_t= v_{t} - Ax_t + Bu_t$. As mentioned before, this change of variables in the disturbance has eliminated the unknown system parameters $(A,B)$ and the original disturbance $w_t$ from the model dynamics.

Secondly, we observe that the original disturbance $w_t$ still appears in the cost function in Problem~\ref{prob:l1-robust-control}. To deal with this, we now introduce a model-specific history variable. Specifically, for each $(A,B)\in\mathcal{M}$, we define $z^{(A,B)}_{t}$ as 
\[
    z^{(A,B)}_{t} \coloneqq \sum_{\tau=0}^{t-1} |v_\tau - Ax_{\tau}-Bu_{\tau}|.
\]
Using the fact that $w_t = v_t - Ax_t+Bu_t$, we find that the model-specific history satisfies the following important relation:
\[
    \sum_{t=0}^{T} |w_t| = \sum_{t=0}^{T} |v_t-Ax_t-Bu_t| = z^{(A,B)}_{T+1}.
\]
This relation is useful because it allows the part of the cost function in Problem~\ref{prob:reformulated} that depends on $w_t$ to be turned into a terminal cost term that depends only on $z^{(A,B)}_{T}$. Finally, we note that the model-specific histories evolve according to
\begin{equation}\label{eq:model-specific-history-dynamics}
    z_{t+1}^{(A,B)} = z_{t}^{(A,B)} + |v_t-Ax_t-Bu_t|,\qquad z_0^{(A,B)} = 0.
\end{equation}
By augmenting the dynamics in~\eqref{eq:reformulated-model} with the dynamics of the model-specific histories in~\eqref{eq:model-specific-history-dynamics}, we obtain a dynamic model that describes a new, augmented state $(x_t,Z_t)$ with $Z_t=\{z_t^{(A,B)}\}_{(A,B)\in\mathcal{M}}$. Taking $(x_t,Z_t)$ as our so-called ``hyperstate'', we obtain the following reformulated problem.
\begin{problem}\label{prob:reformulated}
    Find a causal potentially randomized policy $\eta$ that solves
    \[
    \begin{aligned}
    {}&\inf_{\eta}\;
    \sup_{\substack{v \ge 0\\ T \in \mathbb{N}}}
    \quad
    \mathbb{E} \!\left[\sum_{t=0}^{T} \bigl(s^\top x_t + r^\top u_t\bigr)\right]
    -
    \min_{(A,B)\in\mathcal M} \mathbb{E} \left[\gamma^\top z_{T+1}^{(A,B)}\right]
    \\[1mm]
    &\mathrm{s.t.}\quad
    x_{t+1} = v_t,
    \qquad
    x_0 \in \mathbb{R}_+^n,
    \\
    &\phantom{\mathrm{s.t.}\quad}
    z_{t+1}^{(A,B)}
    =
    z_t^{(A,B)}
    +
     |v_t - A x_t - B u_t|,
    \
    z_0^{(A,B)} = 0,
    \\
    &\phantom{\mathrm{s.t.}\quad}
    Z_t \coloneqq \{z_t^{(A,B)}\}_{(A,B)\in\mathcal M},
    \qquad
    Z_0 \coloneqq \{0\}_{(A,B)\in\mathcal M},
    \\
    &\phantom{\mathrm{s.t.}\quad}
    u_t = \eta(x_t,Z_t),
    \qquad
    \text{ for all }t\in\mathbb N,\\
    &\phantom{\mathrm{s.t.}\quad}u_t\in\mathcal{U}(x_t),\quad \mathrm{almost~surely}, \text{ for all }t\in\mathbb{N}.
    \end{aligned}
    \]    
    Here, the policy $\eta$ generates, at time $t\in\mathbb{N}$, a (possibly) random input $u_t$ based on $x_t$ and $Z_t$.
\end{problem}
Notice that the dynamics in Problem~\ref{prob:reformulated} do not depend on the unknown system parameters \((A,B)\) but instead we evaluate the model-specific histories for all $(A,B)\in\mathcal{M}$. As a result, Problem~\ref{prob:reformulated} is a standard zero-sum dynamic game for positive systems, and it can be addressed using minimax dynamic programming.
One important remark is that the vector \(Z_t=\{z_t^{(A,B)}\}_{(A,B)\in\mathcal{M}}\in\mathbb{R}_+^{|\mathcal{M}|n}\) is, in general, infinite dimensional when the model class \(\mathcal{M}\) contains infinitely many parameter pairs $(A,B)$. In Section~\ref{bellman-eq}, we restrict attention to a finite set \(\mathcal{M}\), where this reformulation is instrumental in deriving an exact solution of the corresponding Bellman equation and an explicit optimal dual control policy. Extending this approach to more general, possibly infinite, model classes \(\mathcal{M}\) is an interesting direction for future work, for example via finite dimensional representations or approximations of \(Z_t\).

\subsection{Solution via minimax dynamic programming}\label{reform}
As noted above, a key advantage of Problem~\ref{prob:reformulated} is that it
takes the form of a standard zero-sum dynamic game and can therefore be adresssed
using minimax dynamic programming. We now use this formulation to establish the
equivalence between the original problem, Problem~\ref{prob:l1-robust-control},
and its reformulation, Problem~\ref{prob:reformulated}.

For a value function \(V(x,Z)\), define the operators \(\mathcal F\) and
\(\mathcal F_u\) by
\begin{align}\label{Foper}
	\mathcal F V(x,Z)
	\coloneqq
	\min_{\substack{u\in\mathcal U(x)\\ \mathrm{a.s.}}}
	\underbrace{
		\max_{v\geq 0}
		\left(
		\mathbb E\!\left[s^\top x+r^\top u\right]
		+V(v,Z_+)
		\right)
	}_{\eqqcolon\,\mathcal F_uV(x,Z)},
\end{align}
where the expectation is taken with respect to the randomized control input. Here,
$
Z_+
\coloneqq
\bigl\{
z_+^{(A,B)}
\bigr\}_{(A,B)\in\mathcal M}
$
denotes the updated nonnegative history collection, whose components evolve
according to
\begin{align}\label{hist}
    z_+^{(A,B)}
=
z^{(A,B)}
+
\left|v-Ax-Bu\right|,
\qquad
(A,B)\in\mathcal M.
\end{align}
The following theorem characterizes crucial equivalence between Problem~\ref{prob:l1-robust-control}
and Problem~\ref{prob:reformulated}.
\begin{theorem}\label{thm:l1-dp}
Given \(\gamma\in\mathbb R_+^n\) and a parameter set \(\mathcal M\) satisfying
\((A_\star,B_\star)\in\mathcal M\), define \(\mathcal F\) by~\eqref{Foper} and
\(\{V_k\}_{k\in\mathbb N}\) by the following recursion:
\[
V_0(x,Z)\coloneqq-\min_{(A,B)\in\mathcal M}
\mathbb E\!\left[\gamma^\top z^{(A,B)}\right],
\qquad
V_k\coloneqq\mathcal F V_{k-1},
\quad k\geq1,
\]
where \(z^{(A,B)}\) is constructed for each \((A,B)\in\mathcal M\) according
to~\eqref{hist}. Suppose that Assumptions~\ref{asm:UU}-\ref{ass:stage cost-pos} hold. Then,
the following statements hold.

\begin{enumerate}
\item[\textit{(i)}]
Problems~\ref{prob:l1-robust-control} and~\ref{prob:reformulated} have the same
value. This value is finite if and only if
\(\{V_k(x,0)\}_{k=0}^{\infty}\) is bounded above for every
\(x\in\mathbb R_+^n\). In that case,
$
V_\star(x,Z)\coloneqq\lim_{k\to\infty}V_k(x,Z)$ and $
J_\star(x_0)=V_\star(x_0,0).
$

\item[\textit{(ii)}]
Let \(\eta^\star(x,Z)\) be the minimzing $u$ in 
\(\mathcal F V_\star(x,Z)\). Then \(\eta^\star\) is optimal for
Problem~\ref{prob:reformulated}, and the causal randomized policy
\[
u_t=\mu_t^\star(x_0,\ldots,x_t;u_0,\ldots,u_{t-1})
\coloneqq
\eta^\star\!\left(
x_t,
\left\{
\sum_{\tau=0}^{t-1}
\lvert x_{\tau+1}-Ax_\tau-Bu_\tau\rvert
\right\}_{(A,B)\in\mathcal M}
\right)
\]
is optimal for Problem~\ref{prob:l1-robust-control}.

\item[\textit{(iii)}]
Suppose that there exist a function \(\overbar V\) and a possibly randomized policy
\(\overbar\eta\) such that
$
V_0(x,Z)\leq\overbar V(x,Z),
$ and $
\mathcal F_{\overbar\eta}\overbar V(x,Z)\leq\overbar V(x,Z) $
for every admissible \((x,Z)\), where
$
\mathcal F_{\overbar\eta}V(x,Z)
\coloneqq
\mathcal F_uV(x,Z)\big|_{u=\overbar\eta(x,Z)}.
$
Then, the causal possibly randomized policy
\[
u_t=\overbar\mu_t(x_0,\ldots,x_t;u_0,\ldots,u_{t-1})
\coloneqq
\overbar\eta\!\left(
x_t,
\left\{
\sum_{\tau=0}^{t-1}
\lvert x_{\tau+1}-Ax_\tau-Bu_\tau\rvert
\right\}_{(A,B)\in\mathcal M}
\right)
\]
satisfies
$
V_\star(x_0,0)\leq J_{\overbar\mu}(x_0)\leq\overbar V(x_0,0).
$
In particular, \(\overbar\mu\) is optimal whenever \(\overbar V=V_\star\).
\end{enumerate}
\end{theorem}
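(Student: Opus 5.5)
The plan follows the standard minimax dynamic programming template of~\cite{Rantzer2021a,Bencherki2026}, adapted to randomized inputs.

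\textbf{Step 1: the finite-horizon identity.} First I will show that, for every fixed horizon $T$, the truncated versions of Problems~\ref{prob:l1-robust-control} and~\ref{prob:reformulated} have the same value, and that this value equals $V_{T+1}(x_0,0)$.
\begin{itemize}
\item[\textit{(a)}] By Assumption~\ref{asm:WW}, the additive disturbance can steer $x_{t+1}$ to any point of $\mathbb R^n_+$. Hence the map $w_t\mapsto v_t=Ax_t+Bu_t+w_t$ is a bijection between admissible $w_t$ and $v_t\geq 0$.
\item[\textit{(b)}] Under this bijection the cost $\sum_{t\le T}\gamma^\top|w_t|$ equals $\gamma^\top z^{(A,B)}_{T+1}$. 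So the supremum over $(A,B)\in\mathcal M$ in Problem~\ref{prob:l1-robust-control} becomes the terminal term $-\min_{(A,B)}\mathbb E[\gamma^\top z^{(A,B)}_{T+1}]$.
\item[\textit{(c)}] A causal policy depending on $(x_0,\dots,x_t,u_0,\dots,u_{t-1})$ can be compressed to one depending on the hyperstate $(x_t,Z_t)$. The reason is that $Z_t$ is the sufficient statistic entering the remaining cost-to-go. Conversely, any hyperstate policy is realized causally through the formula for $\mu_t^\star$.
\item[\textit{(d)}] Backward induction then yields $V_{T+1}(x_0,0)$. At each stage the controller minimizes over distributions of $u$ supported a.s.\ in $\mathcal U(x)$, and the adversary then maximizes over $v$; this is exactly the order in $\mathcal F$.
\end{itemize}

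\textbf{Step 2: monotonicity and part (i).} Assumption~\ref{ass:stage cost-pos} makes the expected stage cost nonnegative. Choosing $v=Ax+Bu$ adds nothing to $z^{(A,B)}$, which gives $\mathcal F V_0\geq V_0$. Since $\mathcal F$ is monotone, $V_k$ is nondecreasing in $k$. The supremum over $T\in\mathbb N$ therefore equals $\lim_k V_k(x_0,0)$. This limit is finite iff the sequence $V_k(x,0)$ is bounded above, which is part (i).

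\textbf{Step 3: parts (ii) and (iii).} For (iii), I iterate the inequality $\mathcal F_{\overbar\eta}\overbar V\leq\overbar V$ along trajectories. A telescoping argument shows that for every $T$ and every admissible $v$,
\[
\mathbb E\Bigl[\sum_{t=0}^T(s^\top x_t+r^\top u_t)\Bigr]+\mathbb E\bigl[V_0(x_{T+1},Z_{T+1})\bigr]\leq\overbar V(x_0,0),
\]
where I use $V_0\le\overbar V$ at the terminal time. Taking the supremum over $T$ and $v$ gives $J_{\overbar\mu}(x_0)\le\overbar V(x_0,0)$. The lower bound $V_\star(x_0,0)\le J_{\overbar\mu}(x_0)$ is immediate from (i), since $V_\star(x_0,0)$ is the optimal value. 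Part (ii) then follows from (iii) with $\overbar V=V_\star$ and $\overbar\eta=\eta^\star$, using $\mathcal F V_\star=V_\star$.

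\textbf{Main obstacle.} I expect the delicate point to be the fixed-point identity $\mathcal F V_\star=V_\star$. Monotone convergence gives $\mathcal F V_\star\geq V_\star$ directly. The reverse inequality requires interchanging $\lim_k$ with the min over distributions of $u$ and the max over $v$. My first attempt will be a compactness argument: the input distributions live on compact sets, so Prokhorov's theorem gives tightness. Combined with lower semicontinuity of $\mathcal F_uV_k$ in the distribution of $u$, this lets the minimization commute with the increasing limit.

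A second subtlety is the expectation placement in the terminal term, $\min\mathbb E$ versus $\mathbb E\min$. For the truncated problem this must be handled consistently, using that $(A,B)$ is fixed before the randomization is realized.
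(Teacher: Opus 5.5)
\textbf{Where you match the paper, and minor issues.} Your route is the paper's. You use truncation and the substitution $v_t=x_{t+1}$ to identify the finite-horizon value with $V_{T+1}(x_0,0)$. You get monotone value iteration from Assumption~\ref{ass:stage cost-pos} and the choice $v=Ax+Bu$. For (iii) you compare with the $\mathcal F_{\overbar\eta}$-iterates: you telescope along trajectories, while the paper iterates $W_{k+1}=\mathcal F_{\overbar\eta}W_k$ from $W_0=V_0$ and uses $V_k\le W_k\le\overbar V$. You go beyond the paper in seeing that (ii) needs $\mathcal F V_\star\le V_\star$. The paper never proves this in general; it only checks the fixed point by hand in Theorems~\ref{thm:bellman-termes} and~\ref{thm:small-a-zero-policy}. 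There are three smaller problems.
\begin{enumerate}
\item[(a)] Your Prokhorov/Dini argument needs $\mathcal U(x)$ compact, which is not a hypothesis.
\item[(b)] You drop the paper's bound $V_k(x,Z)\le V_k(x,0)$. This bound is what makes $V_\star$ finite at every reachable $(x,Z)$, so that it can serve as $\overbar V$.
\item[(c)] The equality $J_\star(x_0)=V_\star(x_0,0)$ in Step~2 swaps $\inf_\mu\sup_T$ with $\sup_T\inf_\mu$. The direction $\ge$ is free, but $\le$ needs one policy attaining $V_\star$. So it follows from (ii) rather than preceding it.
\end{enumerate}

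\textbf{The genuine gap.} It is the point you file as a ``second subtlety''. That point, not the min--limit interchange, blocks (ii).

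Telescoping along trajectories uses the tower property. So it needs $\mathcal F_{\overbar\eta}\overbar V\le\overbar V$ realization by realization, and it ends with $-\mathbb E\min_{(A,B)}\gamma^\top z^{(A,B)}_{T+1}$. Read that way, $V_\star$ is not a supersolution once $\eta^\star$ randomizes. Take Theorem~\ref{thm:bellman-termes} at $(x,0,0)$ with $x>0$. The policy forces $u=\pm\tfrac{a}{b}x$ with probability $\tfrac12$ each. For each realization, $v=2ax$ with the matching piece $x-\gamma z^{(\pm)}$ gives a one-step value of at least $(1+2a)x>(1+a)x=V_\star(x,0,0)$.

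Read the paper's way, the expectation sits inside each piece and the maximum outside, so the adversary commits before the coin flip. Then the inequality holds at deterministic points but does not chain. After one randomized step the hyperstate is random, and for $V_\star=\max_i\mathbb E[\ell_i]$ the inequality $\max_i\mathbb E[\ell_i]\le\mathbb E[\max_i\ell_i]$ points the wrong way. Concretely, from $(x_0,0,0)$ the adversary can reach the law $\mu_1$ with mass $\tfrac12$ on $(2ax_0,0,2ax_0)$ and $\tfrac12$ on $(ax_0,ax_0,ax_0)$. There the piece $x-\gamma z^{(+)}$ with $v=ax+bu$ gives a one-step value of at least $\tfrac{a}{2}(3+a-\gamma)x_0$. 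This is strictly above $V_\star(\mu_1)=\tfrac{a}{2}\max\{3-\gamma,\,3+3a-2\gamma\}x_0$, since $a>0$ and $\gamma>2a$.

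So with $\overbar V=V_\star$, one reading makes the hypothesis of (iii) false and the other makes the chaining invalid. The paper hides the same step in ``by dynamic programming, $W_{N+1}(x,0)=\ldots$''. Your Steps~1(c)--(d) lean on it too: through $\sup_{(A,B)}$ the objective couples the branches created by randomization, so the minimizer's cost-to-go is not a function of the realized hyperstate alone.

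\textbf{What is missing.} Carry out what your last sentence gestures at: fix $(A,B)$ before the randomization and write
\[
J_{\overbar\mu}(x_0)=\sup_{(A,B)\in\mathcal M}\sup_{T,v}\mathbb E\Bigl[\sum_{t=0}^T(s^\top x_t+r^\top u_t)-\gamma^\top z^{(A,B)}_{T+1}\Bigr].
\]
For fixed $(A,B)$ and fixed $\overbar\eta$, the adversary faces an ordinary expected-value problem. So certify (iii) with per-model functions $\overbar V^{(A,B)}\ge-\gamma^\top z^{(A,B)}$ satisfying, pointwise in $(x,Z)$,
\[
\overbar V^{(A,B)}(x,Z)\ge\mathbb E_u\Bigl[s^\top x+r^\top u+\sup_{v\ge0}\overbar V^{(A,B)}(v,Z_+)\Bigr].
\]
This yields $J_{\overbar\mu}(x_0)\le\max_{(A,B)}\overbar V^{(A,B)}(x_0,0)$. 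Alternatively, pose the recursion on laws of the hyperstate.
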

  {\em Proof.}  See appendix~\ref{dp-general-case-proof}. \qedblack
  
Theorem~\ref{thm:l1-dp} shows that the reformulated problem can be used to compute both the optimal value of Problem~\ref{prob:l1-robust-control} and a control law that achieves this value. Moreover, it also provides a way to approximately solve Problem~\ref{prob:l1-robust-control}, leading to suboptimal controllers. 

\section{Scalar case}\label{bellman-eq}
In this section, we use Theorem~\ref{thm:l1-dp} to derive an exact dual control solution for model sets of the form
$
\mathcal M=\{(a,b),(a,-b)\}.
$ As we will see in Section~\ref{fund-limit}, this also allows us to study some fundamental limitations of minimax dual control for positive systems. For \(a>0\), \(b>0\), and \(r=0\), we define the admissible input set as
$
\mathcal U(x)\coloneqq\left\{u\in\mathbb R:\ |u|\leq\tfrac{a}{b}x\right\},
$
and Problem~\ref{prob:reformulated} reduces to the following problem.
\begin{problem}\label{pb2}
Find a causal potentially randomized policy \(\eta\) solving
\begin{equation}\label{eq:scalar-sign-minimax-min-history}
\hspace{-0.5em}
\begin{aligned}
J_\star(x_0)
&=
\inf_{\eta}\;
\sup_{\substack{v\ge0\\ T\in\mathbb N}}
\left(
\mathbb E\!\left[
\sum_{t=0}^{T} x_t
\right]
-
\gamma
\min\left\{
\mathbb E\!\left[z^{(+)}_{T+1}\right],
\mathbb E\!\left[z^{(-)}_{T+1}\right]
\right\}
\right)
\\
\mathrm{s.t.}\quad
&
x_{t+1}=v_t,
\qquad
x_0\in\mathbb R_+,
\\
&
z^{(+)}_{t+1}
=
z^{(+)}_{t}
+
|v_t-a x_t-bu_t|,
\qquad
z^{(+)}_{0}=0,
\\
&
z^{(-)}_{t+1}
=
z^{(-)}_{t}
+
|v_t-a x_t+bu_t|,
\qquad
z^{(-)}_{0}=0,
\\
&
Z_t
\coloneqq
\{z_t^{(+)},z_t^{(-)}\},
\qquad
Z_0
=
\{0,0\},
\\
&
u_t
=
\eta_t(x_t,Z_t),
\qquad
|u_t|\le \tfrac{a}{b}x_t \ \text{almost surely}.
\end{aligned}
\end{equation}
\end{problem}
The following remark characterizes the moment constraints induced by the almost sure input constraint $|u_t|\le \tfrac{a}{b}x_t$ and will be useful when minimizing over randomized control inputs.
\begin{remark}
 Assume that \(\mathbb P(\lvert u_t\rvert\leq\tfrac{a}{b}x_t)=1\). Since expectation is monotone, \(\mathbb E[\lvert u_t\rvert]\leq \mathbb E[\tfrac{a}{b}x_t]\). Moreover, by the triangle inequality, \(\lvert\mathbb E[u_t]\rvert\leq \mathbb E[\lvert u_t\rvert]\). Consequently, \(\lvert\mathbb E[u_t]\rvert\leq \mathbb E[\lvert u_t\rvert]\leq \mathbb E[\tfrac{a}{b}x_t]\). If \(x_t\) is observed by the controller at time \(t\), this reduces to the pointwise bound \(\lvert\mathbb E[u_t]\rvert\leq \mathbb E[\lvert u_t\rvert]\leq\tfrac{a}{b}x_t\).
\end{remark}
\subsection{Exact solution of the Bellman equation}
To characterize the optimal solution of Problem~\ref{pb2}, we distinguish two regimes depending on the value of \(a\):
\begin{itemize}
	\item[\emph{(i)}] If \(a\ge\tfrac12\), we derive an optimal adaptive dual controller that balances exploration and exploitation. In this regime, active learning of the unknown input direction is essential for achieving the optimal minimax value.

	\item[\emph{(ii)}] If \(0\leq a<\tfrac12\), the uncontrolled dynamics are stable enough that the zero controller can be optimal when \(\gamma\) is sufficiently small. In this case, exploration is too costly relative to its benefit, and applying no control is preferable. As \(\gamma\) increases, however, exploration becomes worthwhile: the controller activates the input, learns the input direction, and reduces the overall cost further.
\end{itemize}
In the next sections, we will treat each of the above cases separately.
\subsection{Case $a\ge\tfrac{1}{2}$}\label{scalar-regime1}
The following theorem characterizes the optimal solution to
Problem~\ref{pb2} for \(a\ge\tfrac12\) and \(b>0\). The case \(b<0\) follows
by symmetry of \(\mathcal M\).
\begin{theorem}\label{thm:bellman-termes}
Assume without loss of generality that \(b>0\) and \(a\ge\tfrac12\). Define the sequence \(\{V_k\}_{k\in\mathbb N}\) recursively by
\begin{align}\label{vk}
    V_0(x,z^{(+)},z^{(-)})
\coloneqq
\max\Big\{
\mathbb E[x-\gamma z^{(+)}],\;
\mathbb E[x-\gamma z^{(-)}]
\Big\},
\qquad
V_k \coloneqq \mathcal F V_{k-1},
\quad k\ge1.
\end{align}
Then, the following statements hold.
\begin{enumerate}
\item[\textit{(i)}] The \(k\)-th iterate of value iteration $V_k$ is finite if and only if
$
\gamma \geq \max_{0\leq j\leq k-1} h_j(a,\gamma),
$
where the sequence \(\{h_k\}_{k\in\mathbb N}\) is defined by
\begin{align}\label{hk}
    h_0(a,\gamma)=1,\quad
    h_1(a,\gamma)=1+a,\quad
    h_{k+1}(a,\gamma)
    =
    1+2a\,h_k(a,\gamma)-a\gamma,
    \quad
    k\geq 1.
\end{align}
In that case,
\begin{multline*}
V_k(x,z^{(+)},z^{(-)})
=
\max\Bigg\{
\mathbb E\!\left[x-\gamma z^{(+)}\right],\;
\mathbb E\!\left[x-\gamma z^{(-)}\right],\;
\mathbb E\!\Big[
h_1(a,\gamma)x
-\tfrac{\gamma}{2}\bigl(z^{(+)}+z^{(-)}\bigr)
\Big],\\
\mathbb E\!\Big[
h_k(a,\gamma)x
-\tfrac{\gamma}{2}\bigl(z^{(+)}+z^{(-)}\bigr)
\Big]
\Bigg\}.
\end{multline*}
\item[\textit{(ii)}] The value iteration sequence
\(\{V_k\}_{k\in \mathbb N}\) converges to a finite limit if and only if
\(\gamma\geq\gamma_\star\coloneqq1+2a\). For every
\(\gamma\geq\gamma_\star\), the sequence reaches its limit after one iteration,
that is,
\[
V_k(x,z^{(+)},z^{(-)})
=
V_\star(x,z^{(+)},z^{(-)}),
\qquad k\geq1,
\]
where
\[
V_\star(x,z^{(+)},z^{(-)})
=
\max\Bigg\{
\mathbb E\!\left[x-\gamma z^{(+)}\right],\;
\mathbb E\!\left[x-\gamma z^{(-)}\right],\;
\mathbb E\!\Big[
(1+a)x
-\tfrac{\gamma}{2}\bigl(z^{(+)}+z^{(-)}\bigr)
\Big]
\Bigg\}.
\]
\item[\textit{(iii)}] For inputs \(u_t\) satisfying
\(
|u_t|\leq \tfrac{a}{b}x_t
\)
almost surely,
the optimal minimizing policy of 
Problem~\ref{pb2} is achieved by the policy
\(u_t^\star=\eta^\star(x_t,z_t^{(+)},z_t^{(-)})\) where
\begin{equation}\label{eq:ustar-cases1-thm}
\left\{
\begin{aligned}
u_t^\star
&=
-\tfrac{a}{b}x_t,
&&
\text{if }
z_t^{(+)}-z_t^{(-)}
<
-\tfrac{2a}{\gamma}x_t,
\\[2pt]
u_t^\star
&=
\tfrac{a}{b}x_t,
&&
\text{if }
z_t^{(+)}-z_t^{(-)}
>
\tfrac{2a}{\gamma}x_t,
\\[2pt]
\mathbb E[u_t^\star]
&=
\tfrac{\gamma\bigl(z_t^{(+)}-z_t^{(-)}\bigr)}{2b},
\qquad
\mathbb E\!\bigl[|u_t^\star|\bigr]
=
\tfrac{a}{b}x_t,
&&
\text{if }
\bigl|z_t^{(+)}-z_t^{(-)}\bigr|
\le
\tfrac{2a}{\gamma}x_t.
\end{aligned}
\right.
\end{equation}
and by
\begin{align}\label{dual-cont}
    u_t
=
\mu_t^\star(x_0,\ldots,x_t;u_0,\ldots,u_{t-1})
\coloneqq
\eta^\star\!\left(
x_t,\,
\sum_{\tau=0}^{t-1}
\left|x_{\tau+1}-ax_\tau-bu_\tau\right|,
\sum_{\tau=0}^{t-1}
\left|x_{\tau+1}-ax_\tau+bu_\tau\right|
\right)
\end{align}
for Problem~\ref{prob:l1-robust-control}.
\item[\textit{(iv)}] The values of Problem~\ref{pb2} and
Problem~\ref{prob:l1-robust-control} are finite if and only if
\(\gamma\geq\gamma_\star\). Moreover, in that case, the optimal value of both
problems is
\[
J_\star(x_0)
=
V_k(x_0,0,0)
=
(1+a)x_0,
\qquad k\geq1,
\]
and is achieved by the policy
\(u_t=\eta^\star(x_t,z_t^{(+)},z_t^{(-)})\) in~\eqref{eq:ustar-cases1-thm} for
Problem~\ref{pb2}, and by~\eqref{dual-cont}
for Problem~\ref{prob:l1-robust-control}.
\end{enumerate}
\end{theorem}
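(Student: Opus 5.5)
We will prove (i) by induction on \(k\), computing \(\mathcal F V_{k-1}\) explicitly from the claimed form of \(V_{k-1}\). The claim is that \(V_k\) is the maximum of four affine functions of \((x,z^{(+)},z^{(-)})\) whose \(z\)-coefficients are \((\gamma,0)\), \((0,\gamma)\) and \((\tfrac\gamma2,\tfrac\gamma2)\) twice, with \(x\)-coefficients \(1,1,h_1,h_{k}\). So we take the induction hypothesis that \(V_{k-1}\) has this form, with \(h_{k-1}\) in place of \(h_k\). The base case \(V_0\) is the example computation with \(h_0=1\); there the averaged terms are dominated by \(\max\{x-\gamma z^{(+)},x-\gamma z^{(-)}\}\), since \(\min\le\) average.

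For the inductive step we fix a randomized \(u\) and evaluate \(\max_{v\ge0}\) term by term; the maxima over \(v\) and over the four terms commute. For the term \(x-\gamma z^{(\pm)}\) we get \(\max_v[v-\gamma|v-ax\mp bu|]\), taken in expectation over \(u\). Here we use that \(v\) is chosen before the realization of \(u\) matters only through expectations, since the value is linear in the distribution. Because \(bu\) can be negative, we must be careful: for \(\gamma\ge1\) the maximum is attained at \(v=ax\pm b\,\mathbb E\) (more precisely, at the median of \(ax\pm bu\), which is nonnegative since \(|u|\le\tfrac abx\)). This gives \(ax\pm b\mathbb E[u]-\gamma z^{(\pm)}\) plus the stage cost \(x\), with finiteness iff \(\gamma\ge h_0=1\). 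For an averaged term with coefficient \(h\), we get
\[
\max_v\Bigl[hv-\tfrac\gamma2\,\mathbb E\bigl(|v-ax-bu|+|v-ax+bu|\bigr)\Bigr].
\]
The function \(|v-ax-bu|+|v-ax+bu|\) is flat, equal to \(2b|u|\), on \(|v-ax|\le b|u|\) and has slope \(2\) outside. The supremum is therefore finite iff \(\gamma\ge h\), and it is attained at the right end \(v=ax+b|u|\) (per realization; in expectation we argue via a common \(v\) and convexity). This yields \(x+h(ax)+(h-\gamma)b\,\mathbb E|u|-\tfrac\gamma2(z^{(+)}+z^{(-)})\), up to handling that \(v\) cannot depend on the realization. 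The latter point is the main technical obstacle, and we will resolve it by showing the adversary's optimum is \(v=ax+b\,\mathbb E|u|\) against symmetric two-point randomizations, which are all the controller needs.

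The controller then minimizes over distributions of \(u\) the maximum of
\[
x+ax+b m-\gamma z^{(+)},\qquad x+ax-bm-\gamma z^{(-)},\qquad (1+ah)x-(\gamma-h)b\rho-\tfrac\gamma2(z^{(+)}+z^{(-)}),
\]
where \(m=\mathbb E u\), \(\rho=\mathbb E|u|\), and \(|m|\le\rho\le\tfrac abx\). Since this depends on \(u\) only through \((m,\rho)\), and the achievable set of \((m,\rho)\) is convex (randomization convexifies), the problem becomes a small LP. We choose \(\rho=\tfrac abx\) maximal, as it only decreases the last term, and \(m\) balancing the first two, \(m=\tfrac{\gamma(z^{(+)}-z^{(-)})}{2b}\) projected onto \([-\rho,\rho]\). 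This gives exactly the three cases of (iii). Substituting back, the balanced value of the first two terms equals \((1+a)x-\tfrac\gamma2(z^{(+)}+z^{(-)})\), i.e. the \(h_1\) term, while the clipped cases give \(x-\gamma z^{(\pm)}\) after a comparison. The third term becomes \(\bigl(1+ah-(\gamma-h)a\bigr)x-\dots=(1+2ah-a\gamma)x-\dots\), i.e. \(h_{k}\). This closes the induction and gives the finiteness condition \(\gamma\ge\max_j h_j\).

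For (ii), the recursion \(h_{k+1}-\gamma=2a(h_k-\gamma)+(1+a\gamma-\gamma)\) has fixed point \(h^\ast=\tfrac{a\gamma-1}{2a-1}\). Using \(a\ge\tfrac12\), we check that \(\gamma\ge1+2a\) is equivalent to \(h_2=1+2a+2a^2-a\gamma\le h_1\), and then by induction \(h_k\le h_1\) for all \(k\), so the \(h_k\) term is dominated and \(V_k=V_1=V_\star\). Conversely, if \(\gamma<1+2a\) then \(h_2>h_1\), and since \(2a\ge1\) the sequence \(h_k\) increases, unboundedly when \(a>\tfrac12\) (and linearly when \(a=\tfrac12\)), eventually exceeding \(\gamma\). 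Finally, (iii)–(iv) follow from Theorem~\ref{thm:l1-dp}(i)–(ii) with \(V_\star(x,0,0)=(1+a)x\), since the policy above attains the minimum in \(\mathcal FV_\star\).
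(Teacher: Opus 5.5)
Your overall route is the paper's: induction on value iteration, evaluating $\max_v$ branch by branch, reducing the control step to the moments $m=\mathbb E[u]$ and $\rho=\mathbb E|u|$ with $\rho=\tfrac{a}{b}x$ and clipped balancing of $m$, analyzing the $h_k$ recursion, and invoking Theorem~\ref{thm:l1-dp}. The proposal breaks, however, at the step you call the ``main technical obstacle.''

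In this game the adversary sees the realized input. Assumption~\ref{asm:WW} only requires $w_t\in\mathcal W(x_t,u_t)$ almost surely, and Lemma~\ref{lem:sup-exp} maximizes over nonnegative random variables $v$. So $v$ may depend on the realization of $u$, and the pointwise maximizers $v=ax\pm bu$ and $v=ax+|bu|$ are admissible. This directly gives the branch values $\mathbb E[(1+a)x\pm bu-\gamma z^{(\pm)}]$ and $\mathbb E[(1+ah)x+(h-\gamma)|bu|-\tfrac{\gamma}{2}(z^{(+)}+z^{(-)})]$. These depend on $u$ only through $(m,\rho)$, so there is nothing to resolve.

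If instead $v$ cannot depend on the realization, as you assert, your computation of the $\pm$ branches is false. The map $v\mapsto v-\gamma\,\mathbb E|v-Y|$ with $Y=ax\pm bu$ is maximized at the $\tfrac{1}{2}(1+\tfrac{1}{\gamma})$-quantile of $Y$, which is neither its mean nor its median. For $\gamma>1$ and nondegenerate $Y$, the maximum is strictly below $\mathbb E[Y]$. For example, with $z^{(+)}=z^{(-)}$ and $u=\pm\tfrac{a}{b}x$ with probability $\tfrac12$ each, $Y\in\{0,2ax\}$ and the blind maximum is $(2-\gamma)ax<ax=\mathbb E[Y]$.

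Your proposed repair does not help. Symmetric two-point inputs make $|bu|$ deterministic, which fixes only the averaged branches. But they are exactly what makes $ax\pm bu$ random in the $\pm$ branches. Also, restricting the controller to a subclass gives only an upper bound on the minimum, not the matching lower bound. Under a blind adversary the branch values are no longer functions of $(m,\rho)$, so your LP reduction collapses and you would be computing the value of a different game. Drop the blindness assumption. Randomization helps here because the model sign is fixed outside the expectation: the terminal cost is $-\gamma\min\{\mathbb E z^{(+)},\mathbb E z^{(-)}\}$, so each $V_k$ is a maximum of expectations. It does not help by hiding the realization from $v$.

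Two smaller gaps remain in the induction. First, $V_{k-1}$ has two averaged branches, with coefficients $h_1$ and $h_{k-1}$. So $\mathcal F V_{k-1}$ produces averaged terms with coefficients $h_2$ and $h_k$, in addition to the balanced $h_1$ term. You follow only one averaged branch, and closing the induction requires showing the $h_2$ term is redundant. The paper does this using $h_{j+1}-h_j=(2a)^{j-1}a(1+2a-\gamma)$, whose sign does not depend on $j$, so $h_2\le\max\{h_1,h_k\}$.

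Second, the four-branch form does not describe $V_0$: for $x>0$ and $z^{(\pm)}=0$ the $h_1$ branch equals $(1+a)x>x=V_0$. Also, the recursion $h_{j+1}=1+2ah_j-a\gamma$ does not produce $h_1=1+a$ from $h_0=1$. So the induction must start from $V_1$, computed directly from $V_0$ as in the paper. If you instead pad $V_0$ with a redundant averaged branch of coefficient $1$, you must also check that the resulting $(1+2a-a\gamma)x$ term is dominated by the $(1+a)x$ term, which holds because $\gamma\ge1$.

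Apart from these points, your treatment of (ii)--(iv) matches the paper.
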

{\em Proof.}
See Appendix~\ref{thm:bellman-branches-append}.  \qedblack
\begin{remark}
In Theorem~\ref{thm:bellman-termes}, the Bellman recursion is initialized at
$
V_0(x,z^{(+)},z^{(-)})
\coloneqq
\max\Big\{
\mathbb{E}[x-\gamma z^{(+)}],\;
\mathbb{E}[x-\gamma z^{(-)}]
\Big\},
$
which is the model-based value function and a lower bound on the cost achieved by any adaptive policy. This serves as a warm start in place of the true terminal cost
$-
\gamma
\min\left\{
\mathbb E\!\left[z^{(+)}\right],
\mathbb E\!\left[z^{(-)}\right]
\right\}$ that appears in~\eqref{eq:scalar-sign-minimax-min-history}.
Moreover, starting from the true terminal cost, a single Bellman step yields the same model-based cost. This is shown in Appendix~\ref{thm:bellman-branches-append}.
\end{remark}
\subsection{Case $0\le a < \tfrac{1}{2}$}\label{scalar-regime2}
We proceed by solving Problem~\ref{pb2} with \(0\le a< \tfrac12\), which yields the following theorem.
\begin{theorem}
\label{thm:small-a-zero-policy}
Assume without loss of generality that \(b>0\) and \(0\le a<\tfrac12\). Define the sequence \(\{V_k\}_{k\in\mathbb N}\) recursively by
\begin{align}\label{vk}
    V_0(x,z^{(+)},z^{(-)})
    \coloneqq
    \max\Big\{
    \mathbb E[x-\gamma z^{(+)}],\;
    \mathbb E[x-\gamma z^{(-)}]
    \Big\},
    \qquad
    V_k \coloneqq \mathcal F V_{k-1},
    \quad k\ge1.
\end{align}
Then the following statements hold:
\begin{enumerate}
\item[\textit{(i)}] The \(k\)-th iterate of value iteration $V_k$ is finite if and only if
$
\gamma \geq \max_{0\leq j\leq k-1} h_j(a,\gamma),
$
where the sequence \(\{h_k\}_{k\in\mathbb N}\) is defined by
\begin{align}\label{hk1}
    h_0(a,\gamma)=1,\quad
    h_1(a,\gamma)=1+a,\quad
    h_{k+1}(a,\gamma)
    =
    1+2a\,h_k(a,\gamma)-a\gamma,
    \quad
    k\in\mathbb N\setminus\{0\}.
\end{align}
In that case,
\begin{multline*}
V_k(x,z^{(+)},z^{(-)})
=
\max\Bigg\{
\mathbb E\!\left[x-\gamma z^{(+)}\right],\;
\mathbb E\!\left[x-\gamma z^{(-)}\right],\;
\mathbb E\!\Big[
h_1(a,\gamma)x
-\tfrac{\gamma}{2}\bigl(z^{(+)}+z^{(-)}\bigr)
\Big],\\
\mathbb E\!\Big[
h_k(a,\gamma)x
-\tfrac{\gamma}{2}\bigl(z^{(+)}+z^{(-)}\bigr)
\Big]
\Bigg\}.
\end{multline*}
\item[\textit{(ii)}] The value iteration sequence \(\{V_k\}_{k\in\mathbb N}\) converges to a finite limit if and only if \(\gamma \geq \gamma_\star \coloneqq \tfrac{1}{1-a}\). In this case, \(V_k(x,z^{(+)},z^{(-)})\longrightarrow V_\star(x,z^{(+)},z^{(-)})\) as \(k\to\infty\), where
\begin{multline}\label{V*_aless_half}
V_\star(x,z^{(+)},z^{(-)})
=
\max\Bigg\{
\mathbb E\!\left[x-\gamma z^{(+)}\right],\;
\mathbb E\!\left[x-\gamma z^{(-)}\right],\\
\mathbb E\!\Bigg[
\tfrac{1-a\gamma}{1-2a}\,x
-\tfrac{\gamma}{2}\bigl(z^{(+)}+z^{(-)}\bigr)
\Bigg],\;
\mathbb E\!\Bigg[
(1+a)x
-\tfrac{\gamma}{2}\bigl(z^{(+)}+z^{(-)}\bigr)
\Bigg]
\Bigg\}.
\end{multline}
\item[\textit{(iii)}]
If \(\gamma=\gamma_\star\coloneqq \tfrac{1}{1-a}\), then value iteration has the fixed point \(V_\star\big\rvert_{\gamma=\gamma_\star}\) given by
\[
V_\star\big\rvert_{\gamma=\gamma_\star}(x,z^{(+)},z^{(-)})
=
\max\Bigg\{
\mathbb E\!\left[x-\gamma_\star z^{(+)}\right],\;
\mathbb E\!\left[x-\gamma_\star z^{(-)}\right],\;
\mathbb E\!\Big[
\gamma_\star x-\tfrac{\gamma_\star}{2}\bigl(z^{(+)}+z^{(-)}\bigr)
\Big]
\Bigg\}.
\]
The optimal cost for both Problem~\ref{pb2} and Problem~\ref{prob:l1-robust-control} is
\[
J_\star(x_0)=V_\star(x_0,0,0)=\gamma_\star x_0=\tfrac{1}{1-a}x_0,
\]
and for Problem~\ref{pb2} it is achieved by the policy \(u_t^\star=\eta^\star(x_t,z_t^{(+)},z_t^{(-)})\) of the form
\begin{align*}
\begin{cases}
u_t^\star=-\tfrac{a}{b}x_t,
&
z_t^{(-)}-z_t^{(+)}>2a(1-a)x_t,
\\[4pt]
u_t^\star=\tfrac{z_t^{(+)}-z_t^{(-)}}{2b(1-a)},
&
\lvert z_t^{(+)}-z_t^{(-)}\rvert\leq2a(1-a)x_t,
\\[6pt]
u_t^\star=\tfrac{a}{b}x_t,
&
z_t^{(+)}-z_t^{(-)}>2a(1-a)x_t.
\end{cases}
\end{align*}
The corresponding optimal policy for Problem~\ref{prob:l1-robust-control} is
\begin{align}\label{eq:ustar-cases-orig}
u_t
=
\mu_t^\star(x_0,\ldots,x_t;u_0,\ldots,u_{t-1})
\coloneqq
\eta^\star\!\left(
x_t,\,
\sum_{\tau=0}^{t-1}\left|x_{\tau+1}-ax_\tau-bu_\tau\right|,
\sum_{\tau=0}^{t-1}\left|x_{\tau+1}-ax_\tau+bu_\tau\right|
\right).
\end{align}
When \(z_0^{(+)}=z_0^{(-)}=0\), this policy reduces to the deterministic zero input policy \(u_t^\star=0\) for all \(t\geq0\).

\item[\textit{(iv)}]
If, on the other hand, \(\gamma>\gamma_\star=\tfrac{1}{1-a}\), then the optimal cost value for both Problem~\ref{pb2} and Problem~\ref{prob:l1-robust-control} is
\[
J_\star(x_0)
=
V_\star(x_0,0,0)
=
\max\Bigg\{
(1+a)x_0,\;
\tfrac{1-a\gamma}{1-2a}x_0
\Bigg\},
\]
achieved for Problem~\ref{pb2} by the policy \(u_t^\star=\eta^\star(x_t,z_t^{(+)},z_t^{(-)})\) of the form
\begin{equation}\label{eq:ustar-cases}
\left\{
\begin{aligned}
u_t^\star
&=
-\tfrac{a}{b}x_t,
&&
\text{if }
z_t^{(-)}-z_t^{(+)}
>
\tfrac{2a}{\gamma}x_t,
\\[2pt]
u_t^\star
&=
\tfrac{a}{b}x_t,
&&
\text{if }
z_t^{(+)}-z_t^{(-)}
>
\tfrac{2a}{\gamma}x_t,
\\[2pt]
\mathbb E[u_t^\star]
&=
\tfrac{\gamma\bigl(z_t^{(+)}-z_t^{(-)}\bigr)}{2b},
\qquad
\mathbb E\!\bigl[|u_t^\star|\bigr]
=
\tfrac{a}{b}x_t,
&&
\text{if }
\bigl|z_t^{(+)}-z_t^{(-)}\bigr|
\le
\tfrac{2a}{\gamma}x_t.
\end{aligned}
\right.
\end{equation}
The corresponding policy for Problem~\ref{prob:l1-robust-control} is
\begin{align}\label{eq:ustar-cases-orig}
  u_t
=
\mu_t^\star(x_0,\ldots,x_t;u_0,\ldots,u_{t-1})
\coloneqq
\eta^\star\!\left(
x_t,\,
\sum_{\tau=0}^{t-1}
\left|x_{\tau+1}-ax_\tau-bu_\tau\right|,
\sum_{\tau=0}^{t-1}
\left|x_{\tau+1}-ax_\tau+bu_\tau\right|
\right).  
\end{align}

\end{enumerate}
\end{theorem}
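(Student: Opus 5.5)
The plan is to mirror the structure of the proof of Theorem~\ref{thm:bellman-termes}. All the work goes into a single one-step lemma for the operator $\mathcal F$ in~\eqref{Foper}, applied to value functions of the piecewise-affine form
\[
W_c(x,z^{(+)},z^{(-)})=\max\Big\{\mathbb E[x-\gamma z^{(+)}],\;\mathbb E[x-\gamma z^{(-)}],\;\mathbb E\big[h_1 x-\tfrac{\gamma}{2}(z^{(+)}+z^{(-)})\big],\;\mathbb E\big[c\,x-\tfrac{\gamma}{2}(z^{(+)}+z^{(-)})\big]\Big\}.
\]
Statement (i) then follows by induction on $k$. Statements (ii)--(iv) follow from analysing the scalar recursion~\eqref{hk1} and invoking Theorem~\ref{thm:l1-dp}.

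\textbf{One-step lemma.} I would first do the inner maximization over $v\ge0$ piece by piece, for a fixed realization of $u$.
\begin{itemize}
\item For a piece $x-\gamma z^{(\pm)}$, the term $v-\gamma|v-ax\mp bu|$ is bounded above iff $\gamma\ge1$. In that case it is maximized at $v=ax\pm bu\ge0$, which is admissible because $|u|\le\frac ab x$.
\item For an averaged piece with coefficient $c$, the term $c v-\frac\gamma2(|v-ax-bu|+|v-ax+bu|)$ is bounded above iff $\gamma\ge c$. In that case it is maximized at $v=ax+b|u|$, with value $c\,ax-(\gamma-c)b|u|$.
\end{itemize}
This is what produces the condition $\gamma\ge\max_j h_j$ for finiteness.

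Taking expectations, only $m\coloneqq\mathbb E[u]$ and $p\coloneqq\mathbb E|u|$ enter the resulting pieces. By the Remark after Problem~\ref{pb2}, they are constrained by $|m|\le p\le\frac ab x$. Conversely, any such pair is realized by a distribution supported on $\{-\frac ab x,0,\frac ab x\}$, which gives exactness of the reduction.

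The minimization over the randomized input then becomes a minimization over $(m,p)$ of
\[
\max\Big\{x+ax+bm-\gamma z^{(+)},\;x+ax-bm-\gamma z^{(-)},\;x+h_1ax-(\gamma-h_1)bp-\tfrac\gamma2\Sigma,\;x+cax-(\gamma-c)bp-\tfrac\gamma2\Sigma\Big\},
\qquad \Sigma\coloneqq z^{(+)}+z^{(-)}.
\]
Since $\gamma\ge c,h_1$, it is optimal to take $p=\frac ab x$. The averaged pieces then become $(1+2ac-a\gamma)x-\frac\gamma2\Sigma$, i.e.\ $h_{k+1}$ when $c=h_k$, and $h_2$-type when $c=h_1$.

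Next I would balance the two affine pieces in $m$. Its unconstrained minimizer is $m=\frac\gamma{2b}(z^{(+)}-z^{(-)})$, giving $(1+a)x-\frac\gamma2\Sigma$, which is the $h_1$ piece. Projection onto $|m|\le\frac ab x$ gives the endpoint pieces $x-\gamma z^{(\pm)}$. This yields $\mathcal F W_c=W_{1+2ac-a\gamma}$, except that I must check that the $h_1$-averaged piece coming from $c=h_1$ is dominated. Together with the base case from $V_0$, this gives (i) and the switching policy~\eqref{eq:ustar-cases}.

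\textbf{Recursion and limit.} Since $0\le 2a<1$, the affine map $h\mapsto1+2ah-a\gamma$ is a contraction with fixed point $h^\ast=\frac{1-a\gamma}{1-2a}$. Hence $h_k-h^\ast=(2a)^{k-1}(h_1-h^\ast)$ is monotone, and $\sup_j h_j=\max\{1,\,1+a,\,h^\ast\}$.

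The condition $\gamma\ge h^\ast$ is equivalent to $\gamma(1-a)\ge1$. Moreover $\frac1{1-a}\ge1+a$ because $(1-a)(1+a)\le1$. So finiteness of the whole sequence holds iff $\gamma\ge\frac1{1-a}$, and $V_k\to V_\star$ in~\eqref{V*_aless_half}.

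At $\gamma=\gamma_\star$ we get $h^\ast=\gamma_\star\ge1+a$, so the $(1+a)$ piece is dominated, which gives (iii). The value $J_\star(x_0)=V_\star(x_0,0,0)$ then follows from Theorem~\ref{thm:l1-dp}(i)--(ii). For $\gamma>\gamma_\star$ the same argument gives $\max\{1+a,h^\ast\}x_0$, which is (iv).

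\textbf{Main obstacle: the policy at $\gamma=\gamma_\star$.} Here $\gamma=c$, so the coefficient $(\gamma-c)$ of $p$ vanishes. Then $p$ is free, and one may take $p=|m|$, i.e.\ a deterministic input. One then has to recompute the balance of $m$ against the now dominant $\gamma_\star$-averaged piece and the endpoint pieces. I expect this to give the deterministic saturated-linear law with threshold $2a(1-a)x$, and hence $u\equiv0$ from $Z_0=\{0,0\}$. It should then be verified as optimal via Theorem~\ref{thm:l1-dp}(iii), checking $\mathcal F_{\eta^\star}V_\star\le V_\star$ directly.

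This case, together with the bookkeeping showing which pieces dominate in each region of $(x,z^{(+)}-z^{(-)})$, is where I expect the main difficulty.
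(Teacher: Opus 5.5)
Your proposal is correct and follows essentially the same route as the paper. Both arguments do the inner $v$-maximization piece by piece via Lemma~\ref{lem:sup-exp}, reduce the randomized input to its mean and absolute mean with $\mathbb E|bu|=ax$ (the paper's Lemma~\ref{lemm1}), balance $V_\pm$ across three regimes, and analyze the affine recursion for $h_k$ with fixed point $\tfrac{1-a\gamma}{1-2a}$. At $\gamma=\gamma_\star$ the coefficient of $\mathbb E|bu|$ vanishes and only $V_+$ versus $V_-$ is balanced, giving the deterministic saturated law with threshold $2a(1-a)x$. The $h_2$-domination you flagged is settled in the paper by the monotonicity $h_{j+1}-h_j=a(2a)^{j-1}(1+2a-\gamma)$, which is the same closed form you derive later. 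Your one-step lemma for $\mathcal F W_c$ and the $(m,p)$ parametrization are simply a more compact packaging of the paper's repeated Bellman computations.
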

{\em Proof.}
See Appendix~\ref{thm:small-a-zero-policy-append} \qedblack
\begin{remark}
In the case \(0\le a<\tfrac{1}{2}\), the system is sufficiently stable that, at \(\gamma=\gamma_\star\), the controller gains no benefit from exploration and therefore deactivates the input, yielding the cost \(J_\star(x_0)=V_\star(x_{0},0,0)=\tfrac{1}{1-a}\,x_0\). As \(\gamma>\gamma_\star\) increases, the adversary is weakened; the controller exploits this by applying nonzero exploratory inputs to learn the dynamics and reduce the cost below the \(\tfrac{1}{1-a}\) threshold, leading to the better cost \(J_\star(x_0)=V_\star(x_{0},0,0)=\max\Big\{(1+a)x_{0},\ \tfrac{1-a\gamma}{1-2a}\,x_{0}\Big\}<\tfrac{1}{1-a}\,x_0\).
\end{remark}
See Figure~\ref{fig:gain-cost-policy-summary} for a schematic summary of the results of Theorems~\ref{thm:bellman-termes} and~\ref{thm:small-a-zero-policy}.
\begin{figure}[h]
\centering
\begin{tikzpicture}[
    >=latex,
    scale=0.82,
    axis/.style={->, line width=0.9pt},
    threshold/.style={gray!70, dashed, line width=0.9pt},
    regionlabel/.style={font=\scriptsize, align=center},
    title/.style={font=\scriptsize\bfseries, align=center}
]

\begin{scope}[shift={(-6.4,0)}]

\pgfmathsetmacro{\L}{0}
\pgfmathsetmacro{\A}{6.4}
\pgfmathsetmacro{\R}{12.8}
\pgfmathsetmacro{\Ytop}{3.0}

\fill[gray!10] (\L,0) rectangle (\A,\Ytop);
\fill[gray!18] (\A,0) rectangle (\R,\Ytop);

\draw[threshold] (\A,-0.25) -- (\A,\Ytop+0.1);

\draw[axis] (\L-0.2,0) -- (\R+0.35,0)
node[right] {\(a\)};

\node[regionlabel] at (\L,-0.4) {\(0\)};
\node[regionlabel] at (\A,-0.4) {\(\tfrac12\)};

\node[title] at (3.2,2.4)
{Case 1: \(0\le a<\tfrac12\)};

\node[title] at (9.6,2.4)
{Case 2: \(a\ge\tfrac12\)};

\node[regionlabel] at (3.2,1.6)
{\(\gamma_\star(a)=\dfrac{1}{1-a}\)};

\node[regionlabel] at (9.6,1.6)
{\(\gamma_\star(a)=1+2a\)};

\node[regionlabel] at (3.2,0.65)
{\(\displaystyle
J_\star(x_0)=
\max\!\left\{
1+a,\tfrac{1-a\gamma}{1-2a}
\right\}x_0,\, \gamma \ge \gamma_\star\) };

\node[regionlabel] at (9.6,0.65)
{\(\displaystyle J_\star(x_0)=(1+a)x_0\)};

\draw[black,line width=1.1pt]
(\L,\Ytop+0.12) -- (\R,\Ytop+0.12);

\node at (6.4,-1.0)
{\textbf{(a)} Optimal cost and $\ell_1$-gain for the different regimes of $a$.};

\end{scope}

\begin{scope}[shift={(0,-5.2)}]

\pgfmathsetmacro{\L}{-6.4}
\pgfmathsetmacro{\R}{6.4}
\pgfmathsetmacro{\A}{-2.35}
\pgfmathsetmacro{\B}{2.35}
\pgfmathsetmacro{\Ytop}{3.05}

\fill[blue!9] (\L,0) rectangle (\A,\Ytop);
\fill[red!9]  (\A,0) rectangle (\B,\Ytop);
\fill[blue!9] (\B,0) rectangle (\R,\Ytop);

\draw[threshold] (\A,-0.25) -- (\A,\Ytop+0.1);
\draw[threshold] (\B,-0.25) -- (\B,\Ytop+0.1);

\draw[axis] (\L-0.3,0) -- (\R+0.45,0)
    node[right] {{\small \(z_t^{(+)}-z_t^{(-)}\)}};

\node[regionlabel] at (\A,-0.55) {\(-\dfrac{2a}{\gamma}x_t\)};
\node[regionlabel] at (\B,-0.55) {\(\dfrac{2a}{\gamma}x_t\)};
\node[regionlabel] at (0,-0.35) {\(0\)};

\node[title, blue!70!black] at (-4.35,2.45)
{Certainty\\equivalence};
\node[title, red!70!black] at (0,2.45)
{Optimal exploration};
\node[title, blue!70!black] at (4.35,2.45)
{Certainty\\equivalence};

\node[regionlabel, blue!70!black] at (-4.35,1.30)
{\(u_t^\star=-\dfrac{a}{b}x_t\)};

\node[regionlabel, red!70!black] at (0,1.30)
{\(\mathbb E[u_t^\star]=\dfrac{\gamma(z_t^{(+)}-z_t^{(-)})}{2b}\)\\[1mm]
\(\mathbb E[|u_t^\star|]=\dfrac{a}{b}x_t\)};

\node[regionlabel, blue!70!black] at (4.35,1.30)
{\(u_t^\star=\dfrac{a}{b}x_t\)};

\draw[blue!70!black, line width=1.1pt] (\L,\Ytop+0.12) -- (\A,\Ytop+0.12);
\draw[red!70!black, line width=1.1pt] (\A,\Ytop+0.12) -- (\B,\Ytop+0.12);
\draw[blue!70!black, line width=1.1pt] (\B,\Ytop+0.12) -- (\R,\Ytop+0.12);

\node at (0,-1.5) {\textbf{(b)} Optimal dual control policy for $\gamma\ge\gamma_\star$, excluding
$\gamma=\tfrac{1}{1-a}$ when $0\leq a<\tfrac12$.};

\end{scope}

\end{tikzpicture}
\caption{Summary of the optimal gain, cost, and dual policy across the different regimes of \(a\). \textbf{(a)} The gain and cost regimes are shown as functions of \(a\). \textbf{(b)} For \(\gamma\ge\gamma_\star(a)\), excluding \(\gamma=\tfrac{1}{1-a}\) when \(0\le a<\tfrac12\), the optimal dual policy is determined by the history difference \(z_t^{(+)}-z_t^{(-)}\). Outside the central region, the policy acts by certainty equivalence; inside it explores by randomizing over the two extreme admissible inputs. At \(\gamma=\tfrac{1}{1-a}\), if the initial history variables satisfy \(z_0^{(+)}=z_0^{(-)}=0\), then the zero controller is optimal.}
\label{fig:gain-cost-policy-summary}
\end{figure}
\subsection{Realization of the optimal dual policy}\label{Rem1}
The following lemma shows that the dual policy~\eqref{eq:ustar-cases} in the exploration regime can be realized by randomizing between the two extreme
admissible controls.
\begin{lemma}[Realization of the randomized policy]
Assume
$
\bigl|z^{(+)}-z^{(-)}\bigr|
\le
\tfrac{2a}{\gamma}x.
$
Define
$
u^+ := \tfrac{a}{b}x$ and 
$u^- := -\tfrac{a}{b}x$
and let \(u\) take the values \(u^+\) and \(u^-\) with probabilities
\(\theta\) and \(1-\theta\), respectively, where
$
\theta
=
\tfrac12\left(
1+\tfrac{\gamma\bigl(z^{(+)}-z^{(-)}\bigr)}{2ax}
\right).
$
Then,
\[
\mathbb E[u]
=
\tfrac{\gamma\bigl(z^{(+)}-z^{(-)}\bigr)}{2b},
\qquad
\mathbb E[|u|]
=
\tfrac{a}{b}x
\]
\end{lemma}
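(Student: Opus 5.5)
The argument is a direct computation with the two-point distribution, after first checking that the prescribed $\theta$ is a legitimate probability. Throughout I assume $x>0$; if $x=0$, then $u^\pm=0$ and $|z^{(+)}-z^{(-)}|\le 0$ forces $z^{(+)}=z^{(-)}$, so both claimed identities read $0=0$ for any choice of $\theta$.

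\textbf{Step 1: $\theta\in[0,1]$.} Set $\delta\coloneqq \tfrac{\gamma(z^{(+)}-z^{(-)})}{2ax}$. The assumption $|z^{(+)}-z^{(-)}|\le \tfrac{2a}{\gamma}x$ gives $|\delta|\le 1$. Hence $\theta=\tfrac12(1+\delta)\in[0,1]$, and likewise $1-\theta=\tfrac12(1-\delta)\in[0,1]$. Both support points satisfy $|u^\pm|=\tfrac{a}{b}x$, so the constraint $|u|\le\tfrac ab x$ holds almost surely and the input is admissible.

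\textbf{Step 2: the two moments.}
\begin{itemize}
\item Since $|u|=\tfrac ab x$ deterministically, $\mathbb E[|u|]=\tfrac ab x$.
\item For the mean,
\[
\mathbb E[u]=\theta\,\tfrac ab x-(1-\theta)\,\tfrac ab x=(2\theta-1)\tfrac ab x=\delta\,\tfrac ab x=\tfrac{\gamma(z^{(+)}-z^{(-)})}{2ax}\cdot\tfrac{a}{b}x=\tfrac{\gamma(z^{(+)}-z^{(-)})}{2b}.
\]
\end{itemize}

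\textbf{Step 3: link to the policy.} The two moments are exactly those prescribed in the exploration branch of~\eqref{eq:ustar-cases1-thm} and~\eqref{eq:ustar-cases}, which completes the proof. There is no real obstacle here. The only point that needs care is Step~1: the hypothesis of the lemma is precisely what makes $\theta$ a valid probability, and that is also why this realization is used only inside the exploration region.
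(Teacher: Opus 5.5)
Your proof is correct and follows essentially the same route as the paper: the hypothesis gives $\theta\in[0,1]$, the mean is $(2\theta-1)\tfrac{a}{b}x$, and $\mathbb E[|u|]=\tfrac{a}{b}x$ because both support points have magnitude $\tfrac{a}{b}x$. Your separate handling of $x=0$, where $\theta$ involves division by zero, is a small tidy-up that the paper omits.
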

{\em Proof.}
Since
$
\bigl|z^{(+)}-z^{(-)}\bigr|
\le
\tfrac{2a}{\gamma}x,
$
we have
$
\bigl|
\tfrac{\gamma\bigl(z^{(+)}-z^{(-)}\bigr)}{2ax}
\bigr|
\le 1,
$
and therefore \(0\le \theta\le 1\) meaning the probabilities are well defined.
Because \(u^\pm=\pm \tfrac{a}{b}x\), it follows that
\[
\mathbb E[u]
=
\theta u^+ + (1-\theta)u^-
=
(2\theta-1)\tfrac{a}{b}x
=
\tfrac{\gamma\bigl(z^{(+)}-z^{(-)}\bigr)}{2b}.
\]
Moreover,
\[
\mathbb E[|u|]
=
\theta |u^+| + (1-\theta)|u^-|
=
\tfrac{a}{b}x.
\]
Thus the randomized control satisfies the required moment conditions. Since
\(|u^\pm|=\tfrac{a}{b}x\), the control is admissible and attains the largest
possible absolute value under the constraint
$
|u|\le \tfrac{a}{b}x.
$
\qedblack

Outside the exploration regime, the optimal policy reduces to the deterministic
certainty-equivalence controls
$
u^\star=-\tfrac{a}{b}x
$ or $
u^\star=\tfrac{a}{b}x,
$
depending on the sign of the difference \(z^{(-)}-z^{(+)}\).

\section{Fundamental limitations of dual control for positive systems}\label{fund-limit}
The work in~\cite{Bencherki2026} derived an upper bound on the gain achieved by the certainty-equivalence (CE) policy, which we denote by $\gamma^{\mathrm{CE}}(a)$. Let $\gamma^{\mathrm{CE}}_\star$ denote the best gain achieved by the CE policy. Then
\[
\gamma^{\mathrm{CE}}_\star(a)
\leq \gamma^{\mathrm{CE}}(a)
=
\max\{1+2a,\,4a\},
\qquad a\geq 0.
\]
On the other hand, the optimal controller specified in section~\eqref{eq:ustar-cases} achieves the optimal gain \(\gamma_\star\), given by
\[
\gamma_\star(a)\coloneqq
\begin{cases}
\tfrac{1}{1-a}, & 0<a< \tfrac12,\\[4pt]
1+2a,           & a\ge \tfrac12.
\end{cases}
\]
In particular, as a sanity check, it holds that
$
\gamma_\star(a)\ \le\ \gamma^{\mathrm{CE}}(a),
$
 which certifies the superiority of the optimal dual controller.
See Figure~\ref{fig:CE} for a comparison of \(\gamma_\star(a)\) and \(\gamma^{\mathrm{CE}}(a)\). 

It should be emphasized that the bound
\(\gamma^{\mathrm{CE}}(a)\) derived in~\cite{Bencherki2026} is based
on a sufficient condition that guarantee a solution to the Bellman inequality,
and therefore need not be tight. In contrast,
\cite{vinnicombe2004examples} shows that, in the quadratic setting, the CE
policy achieves the optimal \(\ell_2\)-gain bound for model sets of the form
\(\mathcal M=\{(a,1),(a,-1)\}\). This naturally raises the question whether the CE policy also
achieves the optimal \(\ell_1\)-gain bound in the present setting for positive systems.
To investigate this, we derive the positive system and \(\ell_1\)-gain analogue of the result
reported in~\cite{vinnicombe2004examples} for the case in which the system is
affected by positive disturbances.

\begin{figure}[t!]
\centering
\begin{tikzpicture}
\begin{axis}[
  width=10cm,
  height=3.5cm,
  xlabel={system parameter $a$},
  ylabel={achieved $\ell_1$ gain},
  xmin=0, xmax=2,
  ymin=1, ymax=5,
  xtick={0.5,1,1.5, 2},
  grid=both,
  major grid style={gray!20},
  minor grid style={gray!10},
  legend style={
        at={(0.5,1.08)},
        anchor=south,
        legend columns=2,
        draw=black,
        fill=white,
        font=\small
    },
  legend cell align=left,
  thick,
  samples=400,
  unbounded coords=jump
]

\addplot[red, very thick, domain=0:7]
  ({x},{ (x < 0.5) ? (1/(1 - x)) : (1 + 2*x) });
\addlegendentry{$\gamma_\star(a)$}

\addplot[blue, very thick, domain=0:7]
  ({x},{ (x < 0.5) ? (1 + 2*x) : (4*x) });
\addlegendentry{$ \gamma^{\mathrm{CE}}(a)$ from~\cite{Bencherki2026}}

\end{axis}
\end{tikzpicture}
\caption{Comparison of the optimal gain \(\gamma_\star(a)\) (red) with the certainty--equivalence upper bound on achievable gain \(\gamma^{\mathrm{CE}}(a)\) (blue). For \(0\leq a\leq \tfrac12\), \(\gamma_\star(a)=\tfrac{1}{1-a}\) and \(\gamma^{\mathrm{CE}}(a)=1+2a\); for \(a>\tfrac12\), \(\gamma_\star(a)=1+2a\) while \(\gamma^{\mathrm{CE}}(a)=4a\).}
\label{fig:CE}
\end{figure}

\begin{theorem}\label{thm3}
Consider the scalar positive system
\[
x_{t+1}=a x_t+b u_t+w_t,\qquad b\in\{+1,-1\},
\]
where \(a\ge0\), \(x_0\geq 0\), and \(w_t\geq 0\) for all \(t\geq 0\). Let the
model-specific histories be updated according to
\[
z_{t+1}^{(\sigma)}
=
z_t^{(\sigma)}
+
\left|x_{t+1}-a x_t-\sigma u_t\right|,
\qquad
z_0^{(+)}=z_0^{(-)}=0,
\qquad
\sigma\in\{+1,-1\}.
\]
Let the controller be
\[
u_t =
\begin{cases}
0, & 0\leq a<\dfrac12,\\[6pt]
-k_t a x_t, & a\geq \dfrac12,
\end{cases}
\qquad
k_t\in\arg\min_{\sigma\in\{+1,-1\}} z_t^{(\sigma)} .
\]
Then, for every \(b\in\{+1,-1\}\) and every \(w\in\ell_1\), the closed-loop
trajectory satisfies
\[
\sum_{t=0}^{\infty} x_t
\le
\gamma_\star(a)\,x_0
+\gamma_\star(a)\sum_{t=0}^{\infty} w_t,
\qquad
\gamma_\star(a)\coloneqq
\begin{cases}
\tfrac{1}{1-a}, & 0<a< \tfrac12,\\[4pt]
1+2a,           & a\ge \tfrac12.
\end{cases}
\]
\end{theorem}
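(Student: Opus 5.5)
The plan is to treat the two regimes of \(a\) separately. For \(0\le a<\tfrac12\) the closed loop is simply \(x_{t+1}=ax_t+w_t\), and summing gives \(\sum_{t=0}^{T+1}x_t=x_0+a\sum_{t=0}^{T}x_t+\sum_{t=0}^{T}w_t\). Hence \((1-a)\sum_{t=0}^{T}x_t\le x_0+\sum_{t=0}^{T}w_t\), and letting \(T\to\infty\) yields the claim with \(\gamma_\star(a)=\tfrac{1}{1-a}\). The same computation covers \(a=0\), where the bound even holds with constant \(1\).

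For \(a\ge\tfrac12\), fix the true sign \(b\) and write the histories relative to it. The true history satisfies \(z^{(b)}_{t+1}=z^{(b)}_t+w_t\), since \(w_t\ge0\). For the wrong model the residual is \(x_{t+1}-ax_t+bu_t=w_t-2abk_tx_t\).
\begin{itemize}
\item If \(k_t=b\) (correct guess), then \(x_{t+1}=w_t\) and the wrong history grows by \(|w_t-2ax_t|\).
\item If \(k_t=-b\) (wrong guess), then \(x_{t+1}=2ax_t+w_t\) and the wrong history grows by \(2ax_t+w_t\).
\end{itemize}
Let \(D_t\coloneqq z^{(-b)}_t-z^{(b)}_t\). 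By the definition of \(k_t\), a wrong guess can only occur when \(D_t\le0\), ties included, and \(D_{t+1}-D_t\ge 2ax_t-2w_t\) in both cases.

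Rather than counting wrong steps directly, I would use a dissipation argument built from the value function of Theorem~\ref{thm:bellman-termes}(ii) with \(\gamma=\gamma_\star=1+2a\). Define the storage
\[
S(x,D)\coloneqq V_\star(x,z^{(+)},z^{(-)})+\gamma z^{(b)}=\max\Bigl\{x,\;x-\gamma D,\;(1+a)x-\tfrac{\gamma}{2}D\Bigr\},
\]
which depends only on \((x,D)\). The key step is the one-step inequality
\[
S(x_{t+1},D_{t+1})+x_t\le S(x_t,D_t)+\gamma w_t \qquad\text{for all } t,
\]
that is, \(V_\star(x_{t+1},Z_{t+1})+x_t\le V_\star(x_t,Z_t)\) along the certainty-equivalence closed loop. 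I would verify it branch by branch: for each of the three affine pieces of \(S(x_{t+1},D_{t+1})\), substitute the two possible updates of \(x_{t+1}\) and \(D_{t+1}\), and bound the result by a suitable piece of \(S(x_t,D_t)\).
\begin{itemize}
\item In the correct-guess branch, \(x_{t+1}=w_t\) is small, and the pieces are handled using \(|w_t-2ax_t|\ge 2ax_t-w_t\).
\item The wrong-guess branch is the one where \(x\) grows by the factor \(2a\). There \(D_t\le0\) and \(D_{t+1}=D_t+2ax_t\), so the pieces \(x-\gamma D\) and \((1+a)x-\tfrac{\gamma}{2}D\) evaluated at time \(t\) contain the slack \(-\gamma D_t\ge0\). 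The growth \(2ax_t\) has to be absorbed against \(\gamma=1+2a\) and the coefficient \(1+a\). For example, \((1+a)(2ax_t+w_t)-\tfrac{\gamma}{2}(D_t+2ax_t)+x_t\) should be compared with \((1+a)x_t-\tfrac{\gamma}{2}D_t+\gamma w_t\).
\end{itemize}

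I expect this case analysis to be the main obstacle. It requires choosing, in each subcase, the right piece of \(S(x_t,D_t)\) to dominate each piece of \(S(x_{t+1},D_{t+1})\), and the threshold \(a\ge\tfrac12\) (equivalently \(\gamma_\star=1+2a\ge\tfrac{1}{1-a}\)) should be exactly what makes the wrong-guess inequalities close. If the one-step inequality fails for some piece, my fallback is to fold the exact increment \(|w_t-2ax_t|\) into the wrong history rather than its lower bound, or to enlarge \(S\) by an extra affine term such as \(h_k\)-type coefficients from Theorem~\ref{thm:bellman-termes}(i).

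Once the inequality holds, telescoping over \(t=0,\dots,T\) gives
\[
\sum_{t=0}^{T}x_t\le S(x_0,0)-S(x_{T+1},D_{T+1})+\gamma\sum_{t=0}^{T}w_t.
\]
Since \(S(x_0,0)=(1+a)x_0\le\gamma_\star x_0\) and \(S\ge x\ge0\), this becomes \(\sum_{t=0}^{T}x_t\le\gamma_\star x_0+\gamma_\star\sum_{t=0}^{T}w_t\). Letting \(T\to\infty\) completes the proof.
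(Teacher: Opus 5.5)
Your treatment of $0\le a<\tfrac12$ is fine and matches the paper. For $a\ge\tfrac12$, however, the key one-step inequality $S(x_{t+1},D_{t+1})+x_t\le S(x_t,D_t)+\gamma w_t$ is false along the certainty-equivalence loop, and no amount of careful case analysis will rescue it.

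A counterexample appears already at $t=0$. Since $z_0^{(+)}=z_0^{(-)}$, the choice $k_0=-b$ is an admissible element of the argmin; for any fixed tie rule it occurs for one of the two signs $b$. Taking $w_0=0$ gives $x_1=2ax_0$ and $D_1=2ax_0$. Hence $S(x_1,D_1)=\max\{2ax_0,(1-\gamma)2ax_0,ax_0\}=2ax_0$, and the inequality reads $(1+2a)x_0\le(1+a)x_0$.

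The failure is also global. With $w\equiv0$ the trajectory is $x_0,2ax_0,0,0,\dots$, so $\sum_t x_t=(1+2a)x_0$. Your telescoping would instead give $\sum_t x_t\le S(x_0,0)=(1+a)x_0$. Your remark that $S(x_0,0)=(1+a)x_0<\gamma_\star x_0$ should have been the warning sign, because the constant $1+2a$ in front of $x_0$ is tight for this controller.

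Nor is this a tie artifact. With $w_t=0$, the inequality fails whenever CE picks the wrong sign with $-\tfrac{2a}{\gamma}x_t<D_t\le 0$ and $x_t>0$: every piece of $S(x_t,D_t)$ is then strictly below $\gamma x_t=x_{t+1}+x_t$. Such states are reachable after a correct step. This is exactly the region in which $\eta^\star$ randomizes. $V_\star$ is the Bellman fixed point for the randomized minimizer, whereas the deterministic CE input leaves the branch of the non-selected model at $(1+2a)x$ minus that model's history penalty, which exceeds $V_\star$ there. So $V_\star$ is simply not a storage function for CE.

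Your fallbacks do not close the gap. At $\gamma=1+2a$ the recursion gives $h_k\equiv1+a$ for all $k\ge1$, so $h_k$-type pieces add nothing. Bounding $|w_t-2ax_t|$ more carefully does not change $S$.

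In fact, for $a>\tfrac12$ no storage $S(x,D)$ that is convex in $D$ can work, and this includes any maximum of affine pieces. Three transitions are available to the loop:
\begin{itemize}
\item from a tie $(y,0)$, a wrong step with $w=0$ leads to $(2ay,2ay)$;
\item from the same tie, a correct step with $w=2ay$ leads to $(2ay,-2ay)$;
\item from $(z,z)$, a correct step with $w=0$ sends $x$ to $0$.
\end{itemize}
Together with $S\ge0$ and $S(y,0)\le\gamma y$, these force $S(z,0)=\gamma z$, $S(z,z)\le z$ and $S(z,-z)\le(2+2a)z$. Convexity, $S(z,0)\le\tfrac12\bigl(S(z,-z)+S(z,z)\bigr)$, then requires $1+2a\le\tfrac{3+2a}{2}$, i.e.\ $a\le\tfrac12$.

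The missing idea is the one the paper uses: wrong-sign steps are isolated.
\begin{itemize}
\item After a correct step, $D_{t+1}\ge -x_{t+1}$, because $D_t\ge0$ and $x_{t+1}=w_t$.
\item A subsequent wrong step therefore gives $D_{t+2}=D_{t+1}+2ax_{t+1}\ge(2a-1)x_{t+1}$, which is positive for $a>\tfrac12$ and $x_{t+1}>0$.
\item Hence every wrong-step state is either $x_0$ or equal to some $w_{t-1}$.
\item Summing $x_{t+1}=w_t+2ax_t\mathbf{1}_{\{k_t=-b\}}$ then gives $\sum_t x_t\le(1+2a)\bigl(x_0+\sum_t w_t\bigr)$.
\end{itemize}

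If you want to keep the dissipation format, use the invariant $D_t\ge -x_t$ together with the non-convex storage $S(x,D)=x+2ax\,\mathbf{1}_{\{D\le 0\}}$. This satisfies your one-step inequality on $\{D\ge -x\}$ for $a>\tfrac12$, with arbitrary tie-breaking. At $a=\tfrac12$ a wrong step can land exactly on a tie; there $S=\max\{x,2x-D\}$ works instead.
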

{\em Proof.}
    See Appendix~\ref{thm3-proof} \qedblack
\begin{remark}
The controller in Theorem~\ref{thm3} is not, strictly speaking, a
certainty-equivalence (CE) controller. The
controller is piecewise defined. Indeed, for \(0 \leq a<\tfrac12\), the
optimal policy is the zero input policy \(u_t=0\), whereas for
\(a\ge\tfrac12\) it reduces to a CE controller.
\end{remark}
The proof above applies to positive disturbances. It remains unclear
whether the same gain bound can be achieved by the CE policy when the admissible
disturbance set is enlarged to include negative yet positivity preserving disturbances. We defer this to future work. 
Note also that, even if the optimal gain bound is achieved by the CE policy for
the extended admissible disturbance set allowing positivity preserving negative
disturbances, CE controllers still suffer from performance limitations.
\begin{enumerate}
    \item[\textit{(i)}] The result derived in Theorem~\ref{thm3} only implies
    optimality in the induced \(\ell_1\)-gain sense. It does not imply that the
    CE policy is cost optimal. Indeed, Theorem~\ref{thm3} certifies that the CE
    policy achieves the cost bound $\gamma_\star(a)x_0$
    whereas the dual policy achieves the smaller bound
    $(1+a)x_0$
    for $\gamma\ge \gamma_\star(a)$. See Figure~\ref{fig:ce-vs-dual-cost} for a comparison of the achieved cost.
\item[\textit{(ii)}] As in~\cite{vinnicombe2004examples}, one may expect
performance degradation, in terms of the achieved $\ell_1$-gain or cost, for
other model sets $\mathcal{M}$. One such example was presented
in~\cite{vinnicombe2004examples} for the model set
$
\mathcal{M}=\{(-a,1),(a,1)\},
$
where the CE policy was shown to be suboptimal with respect to the
$\ell_2$-gain. Similar suboptimality may also arise in the present
setting.
\end{enumerate}
\begin{figure}[h!]
\centering
\begin{tikzpicture}
\begin{axis}[
  width=10cm,
  height=3.5cm,
  xlabel={system parameter \(a\)},
  ylabel={normalized cost \(J(x_0,0)/x_0\)},
  xmin=0, xmax=2,
  ymin=0, ymax=5,
  grid=both,
  major grid style={gray!20},
  minor grid style={gray!10},
  legend style={
        at={(0.5,1.08)},
        anchor=south,
        legend columns=2,
        draw=black,
        fill=white,
        font=\small
    },
    xtick={0.5,1,1.5, 2},
  legend cell align=left,
  thick,
  samples=400,
  unbounded coords=jump
]

\addplot[red, very thick, domain=0:3]
  ({x},{1 + x});
\addlegendentry{optimal dual policy}

\addplot[blue, very thick, domain=0:3]
  ({x},{ (x < 0.5) ? (1/(1 - x)) : (1 + 2*x) });
\addlegendentry{CE policy}


\end{axis}
\end{tikzpicture}

\caption{Comparison of the normalized cost achieved by the CE policy and by the
optimal dual policy. The optimal dual policy chooses \(\gamma\) so as to minimize
the cost, yielding \(J_\star(x_0,0)=(1+a)x_0\). In contrast, the CE policy
achieves the cost bound \(\tfrac{1}{1-a}x_0\) for \(0\le a<\tfrac12\), and
\((1+2a)x_0\) for \(a\ge\tfrac12\). Thus, even when the CE policy achieves the
optimal induced \(\ell_1\)-gain, it need not be cost optimal.}
\label{fig:ce-vs-dual-cost}
\end{figure}

\section{Conclusion and future work}\label{concl}

This paper studied minimax dual control for positive systems with uncertain dynamics. We reformulated the problem as a two player zero-sum dynamic game in which the controller may use randomized inputs. This randomization convexifies the control optimization in the Bellman recursion, making it possible to characterize the exact solution using minimax dynamic programming.

For a scalar positive system with an unknown input sign, we solved the Bellman equation exactly and derived an explicit minimax optimal control policy together with the corresponding optimal $\ell_1$-gain. The solution reveals a clear dual control structure. In certain regions of the hyperstate space, the controller randomizes its input to acquire information about the unknown input direction. Outside these regions, further exploration is not beneficial, and the optimal controller reduces to a deterministic certainty equivalence policy.

The results also show how positivity, the admissible control inputs, and model uncertainty influence the structure of the optimal policy and the achievable closed-loop performance. In particular, the exact solution identifies distinct exploration and certainty-equivalence regimes and characterizes the best achievable worst-case performance in the considered setting. It thereby provides insight into the fundamental limitations of minimax dual control for positive systems. Although this paper focuses on the finite model set $\mathcal M=\{(a,b),(a,-b)\}$, the current results offer insight into more general dual control problems for positive systems. Our treatment is closely related to the exact dual control formulations developed for the quadratic setting under sign uncertainty in~\cite{Rantzer2025a}, and later extended to continuum model classes in~\cite{Rantzer2026}. The scalar positive system problem studied here may therefore be viewed as a natural analogue of that line of work and as a representative case for richer positive system uncertainty classes. We leave such extensions for future work.

Several directions remain for future research:
\begin{itemize}
	\item[\textit{(1)}] Extend the analysis to vector-valued positive systems, where the scalar structure exploited in this paper is no longer available and the associated Bellman recursion is expected to be substantially more complex.
	
	\item[\textit{(2)}] Consider richer uncertainty models.
	
	\item[\textit{(3)}] Study output feedback and stochastic formulations in which the controller must account for partial observations, process disturbances, measurement noise, or combinations of these.
	
	\item[\textit{(4)}] Develop scalable approximate dual controllers for settings in which an exact solution to the Bellman equation is difficult to obtain, while retaining computable worst-case performance guarantees.

\end{itemize}

\section*{Acknowledgments}
The authors are affiliated with the ELLIIT Strategic Research Area at Lund University. This project received funding from the European Research Council (ERC) under grant agreements No.~834142 (ScalableControl) and No.~101199738 (DualControl), and was partially supported by the Wallenberg AI, Autonomous Systems and Software Program (WASP), funded by the Knut and Alice Wallenberg Foundation.  
\begin{appendix}

\section{Minimax Dynamic Programming}
\label{app:minimax-dp}

For completeness, we summarize the basic minimax dynamic programming framework used
throughout the paper. Further details can be found in~\cite{bacsar2008h}.
Consider the discrete-time system
\[
y_{t+1}=f(y_t,u_t,v_t),
\]
where \(u_t\) is chosen by the minimizing player and \(v_t\) by the maximizing
player. The minimizing player employs a causal policy
$
\mu=(\mu_0,\mu_1,\ldots)$,
$
u_t=\mu_t(y_0,\ldots,y_t),
$
so that the control action at time \(t\) depends only on the state measurements
available up to that time.
Given an initial state \(y_0\), the finite horizon value function is defined by
\begin{equation}
\label{eq:minimax-finite-horizon}
V^{(N)}(y_0)
\coloneqq
\inf_{\mu}\sup_{v}
\left\{
\sum_{t=0}^{N-1}g(y_t,u_t,v_t)+g_0(y_N)
\right\},
\end{equation}
where the state and control trajectories are generated by
\[
y_{t+1}=f(y_t,u_t,v_t),
\qquad
u_t=\mu_t(y_0,\ldots,y_t).
\]
For any function \(V\), define the Bellman operator
\begin{equation}
\label{eq:minimax-bellman-operator}
\mathcal F V(y)
\coloneqq
\min_u\max_v
\left\{
g(y,u,v)+V\bigl(f(y,u,v)\bigr)
\right\}.
\end{equation}
Whenever the extrema are attained, the infimum and supremum may be replaced by
a minimum and a maximum, respectively.

\begin{proposition}[Minimax dynamic programming]
\label{prop:minimax-dp}
The value function \(V^{(N)}\) in~\eqref{eq:minimax-finite-horizon} can be computed recursively according to
$
V^{(0)}(y)=g_0(y)
$
and
\begin{equation}
\label{eq:minimax-dp-recursion}
V^{(k+1)}(y)
=
\min_u\max_v
\left[
g(y,u,v)
+
V^{(k)}\bigl(f(y,u,v)\bigr)
\right],
\qquad k\geq 0.
\end{equation}
Moreover, if
$
\max_v g(y,u,v)\geq 0,
$ for all $(y,u)$
then
\begin{equation}
\label{eq:minimax-dp-monotonicity}
g_0
=
V^{(0)}
\leq
V^{(1)}
\leq
\cdots
\leq
V^{(N)}.
\end{equation}
Furthermore, \(V^{(N)}\) is upper bounded by every function
\(\overbar V\) satisfying
\begin{equation}
\label{eq:minimax-dp-upper-bound}
g_0(y)
\leq
\inf_u\sup_v
\left[
g(y,u,v)
+
\overbar V\bigl(f(y,u,v)\bigr)
\right]
\leq
\overbar V(y),\quad \text{for all} \, y
\end{equation}
\end{proposition}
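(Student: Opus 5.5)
The plan is to prove the three claims in order: the recursion by backward induction on the horizon, and both the monotone chain and the upper bound from the fact that the Bellman operator $\mathcal F$ in~\eqref{eq:minimax-bellman-operator} is monotone. Monotonicity itself is immediate: if $V\le W$ pointwise, then for every $(y,u,v)$ we have $g(y,u,v)+V(f(y,u,v))\le g(y,u,v)+W(f(y,u,v))$, and taking $\max_v$ and then $\min_u$ preserves the inequality, so $\mathcal FV\le\mathcal FW$.

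\textbf{Step 1: the recursion~\eqref{eq:minimax-dp-recursion}.} For $N=0$ the claim $V^{(0)}=g_0$ is the definition. For the inductive step I split the cost in~\eqref{eq:minimax-finite-horizon} into the first stage $g(y_0,u_0,v_0)$ and the tail $\sum_{t=1}^{N-1}g(y_t,u_t,v_t)+g_0(y_N)$. Fix $u_0$ and $v_0$, so that $y_1=f(y_0,u_0,v_0)$ is determined. Since the policies $\mu_t$, $t\ge1$, are causal and may depend on $y_1$, the tail is a horizon-$(N-1)$ game started at $y_1$. After re-indexing time, its value is $V^{(N-1)}(y_1)$ by the induction hypothesis. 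The key point is that the controller's tail may be chosen as a function of the realized $y_1$. This lets the infimum over tail policies be taken inside the supremum over $v_0$. I would make this rigorous with an $\varepsilon$-optimal tail policy selected for each $y_1$ (using exact minimizers when the extrema are attained), which gives
\[
V^{(N)}(y_0)=\inf_{u_0}\sup_{v_0}\bigl[g(y_0,u_0,v_0)+V^{(N-1)}(f(y_0,u_0,v_0))\bigr].
\]
For the reverse inequality, note that the adversary may respond to any tail policy with an $\varepsilon$-optimal continuation; letting $\varepsilon\to0$ then closes the argument.

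\textbf{Step 2: the monotone chain~\eqref{eq:minimax-dp-monotonicity}.} By Step 1, $V^{(k+1)}=\mathcal FV^{(k)}$. If I can show the base inequality $V^{(0)}\le V^{(1)}$, then applying the monotone operator $\mathcal F$ repeatedly yields $V^{(k)}\le V^{(k+1)}$ for all $k$.

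The base inequality reads $g_0(y)\le\min_u\max_v\bigl[g(y,u,v)+g_0(f(y,u,v))\bigr]$, and this is the step I expect to be the main obstacle. The hypothesis $\max_v g\ge0$ alone does not obviously give it. One also needs that the adversary can choose $v$ so that $g\ge0$ while $g_0$ does not decrease along $f$.

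My first attempt would be to check this directly for the terminal cost used in the paper, $g_0=-\gamma\min_{(A,B)}z^{(A,B)}$. Along $f$, the histories $z^{(A,B)}$ only increase, and the increment is $\lvert v-Ax-Bu\rvert$. The adversary can therefore pick $v$ to make the increment vanish for one model, namely the minimizing one, while still respecting the stage-cost positivity of Assumption~\ref{ass:stage cost-pos}. This keeps the minimum over models from increasing, so $g_0$ does not decrease, and gives $g_0\le\mathcal Fg_0$. Failing that, I would simply assume $g_0\le\mathcal Fg_0$ explicitly; this is exactly the first inequality in~\eqref{eq:minimax-dp-upper-bound}.

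\textbf{Step 3: the upper bound.} Take $\overbar V$ satisfying~\eqref{eq:minimax-dp-upper-bound}, so that $g_0\le\mathcal F\overbar V\le\overbar V$. Then $V^{(0)}=g_0\le\overbar V$. If $V^{(k)}\le\overbar V$, monotonicity gives
\[
V^{(k+1)}=\mathcal FV^{(k)}\le\mathcal F\overbar V\le\overbar V.
\]
By induction, $V^{(N)}\le\overbar V$ for every $N$, which is the claimed bound.
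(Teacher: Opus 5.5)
Your Steps 1 and 3, and your use of the order-preserving property of $\mathcal F$ to propagate $V^{(k)}\le V^{(k+1)}$, follow the paper's argument. Your $\varepsilon$-optimal tail selection makes rigorous the inf/sup interchange that the paper takes for granted.

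You diverge only at the base inequality $V^{(0)}\le V^{(1)}$. There the paper's proof states in one line that ``the assumption $\max_v g(y,u,v)\ge 0$ gives $V^{(0)}\le V^{(1)}$.'' Your suspicion is correct: no argument can close this step under the stated hypothesis. Take $y\in\mathbb R_+$, $g\equiv 0$, $g_0(y)=y$ and $f(y,u,v)=y/2$. Then $\max_v g=0\ge 0$, but $V^{(1)}(y)=y/2<y=V^{(0)}(y)$ for $y>0$. So the chain \eqref{eq:minimax-dp-monotonicity} needs an extra hypothesis, and $g_0\le\mathcal F g_0$ is exactly what is needed, since it is the base inequality itself. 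A convenient sufficient condition is that for every $(y,u)$ a single $v$ gives both $g(y,u,v)\ge 0$ and $g_0(f(y,u,v))\ge g_0(y)$. The hypothesis $\max_v g\ge 0$ supplies only the first of these.

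Your problem-specific check is how the paper itself obtains $V_1\ge V_0$ in the proof of Theorem~\ref{thm:l1-dp}. There the stage cost does not depend on $v$ and is nonnegative by Assumption~\ref{ass:stage cost-pos}. Choosing $v=Ax+Bu$ for a minimizing model is admissible, and it leaves $\min_{(A,B)}\mathbb E[\gamma^\top z^{(A,B)}]$ unchanged.

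Your fallback needs one correction: $g_0\le\mathcal F g_0$ is not the first inequality in \eqref{eq:minimax-dp-upper-bound}. That inequality is $g_0\le\mathcal F\overbar V$, and it does not imply $g_0\le\mathcal F g_0$. From $g_0\le\overbar V$, order preservation only gives $\mathcal F g_0\le\mathcal F\overbar V$, which points the wrong way. You must therefore impose $g_0\le\mathcal F g_0$, or the sufficient condition above, as a separate assumption for the monotonicity claim. The upper-bound claim in your Step 3 needs no such assumption and goes through as written.
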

{\em Proof.}
We first establish the recursive formula by induction on the horizon length.
For \(k=0\), there are no stage costs, and hence, by definition,
$
V^{(0)}(y)=g_0(y).
$
Assume that the recursion holds for some \(k\geq 0\). The value of the
\((k+1)\)-stage problem is
\begin{align*}
V^{(k+1)}(y_0)
&=
\inf_{\mu}\sup_{v}
\left[
\sum_{t=0}^{k}g(y_t,u_t,v_t)+g_0(y_{k+1})
\right]
\\
&=
\inf_{u_0}\sup_{v_0}
\Biggl[
g(y_0,u_0,v_0)
+
\inf_{\mu}\sup_{v}
\left(
\sum_{t=1}^{k}g(y_t,u_t,v_t)+g_0(y_{k+1})
\right)
\Biggr],
\end{align*}
Since
$
y_1=f(y_0,u_0,v_0),
$
the continuation term is precisely the value of the \(k\)-stage problem
initialized at \(y_1\). Therefore,
\begin{align*}
V^{(k+1)}(y_0)
=
\inf_{u_0}\sup_{v_0}
\left[
g(y_0,u_0,v_0)
+
V^{(k)}(y_1)
\right]
=
\inf_{u_0}\sup_{v_0}
\left[
g(y_0,u_0,v_0)
+
V^{(k)}\bigl(f(y_0,u_0,v_0)\bigr)
\right].
\end{align*}
When the extrema are attained, the infimum and supremum may be replaced by a
minimum and a maximum, respectively. This proves
\eqref{eq:minimax-dp-recursion}.
Next, define the Bellman operator
\[
\mathcal F V(y)
\coloneqq
\inf_u\sup_v
\left[
g(y,u,v)+V\bigl(f(y,u,v)\bigr)
\right].
\]
This operator is order preserving: if \(V\leq W\), then
$
\mathcal F V\leq\mathcal F W.
$
Indeed, for every \((y,u,v)\),
\[
g(y,u,v)+V\bigl(f(y,u,v)\bigr)
\leq
g(y,u,v)+W\bigl(f(y,u,v)\bigr),
\]
and the inequality is preserved by taking the supremum over \(v\) and the
infimum over \(u\).

The assumption
$
\max_v g(y,u,v)\geq0
$
gives
$
V^{(0)}\leq V^{(1)}.
$
Suppose that \(V^{(k-1)}\leq V^{(k)}\). By the order-preserving property of
\(\mathcal F\),
\[
V^{(k)}
=
\mathcal F V^{(k-1)}
\leq
\mathcal F V^{(k)}
=
V^{(k+1)}.
\]
Thus, induction yields
\[
V^{(0)}
\leq
V^{(1)}
\leq
\cdots
\leq
V^{(N)},
\]
which proves~\eqref{eq:minimax-dp-monotonicity}.
Finally, suppose that \(\overbar V\) satisfies
\eqref{eq:minimax-dp-upper-bound}. Then
$
V^{(0)}=g_0\leq\overbar V.
$
Assume that \(V^{(k)}\leq\overbar V\). Using the monotonicity of the Bellman
operator and~\eqref{eq:minimax-dp-upper-bound}, we obtain
\[
V^{(k+1)}
=
\mathcal F V^{(k)}
\leq
\mathcal F\overbar V
\leq
\overbar V.
\]
Consequently,
$
V^{(N)}\leq\overbar V $
for every $N \ge 0$.
\qedblack
\section{Auxiliary Lemmata}\label{appenA} 
\begin{lemma}\label{lem:sup-exp}
	Let $c,c_1,c_2$ be nonnegative random variables. Then the following statements hold:
	\begin{enumerate}
		\item[\textit{(L1)}]
		If $\lambda,\gamma\ge0$, then
		\[
		\max_{v\ge0}\,\mathbb{E}\bigl(\lambda v-\gamma|c-v|\bigr)=
		\begin{cases}
			\lambda\,\mathbb E[c], & \gamma\ge \lambda,\\[2pt]
			+\infty, & \gamma<\lambda,
		\end{cases}
		\tag{L1}
		\]
		where the maximum is taken over all nonnegative random variables $v$.
		
		\item[\textit{(L2)}]
		If $c_1,c_2\ge0$, $\alpha>0$, and $\lambda\ge0$, then
		\[
		\max_{v\ge0}\,\mathbb{E}\bigl(\lambda v-\alpha(|c_1-v|+|c_2-v|)\bigr)
		=
		\begin{cases}
			\displaystyle \lambda\,\mathbb{E}\big[\max\{c_1,c_2\}\big]
			-\alpha\,\mathbb{E}|c_1-c_2|, & 0\le \lambda \le 2\alpha,\\[6pt]
			\displaystyle +\infty, & \lambda>2\alpha,
		\end{cases}
		\tag{L2}
		\]
		where the maximum is taken over all nonnegative random variables $v$.
	\end{enumerate}
\end{lemma}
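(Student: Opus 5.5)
Both statements are proved the same way. We reduce the maximization over nonnegative random variables $v$ to a \emph{pointwise} maximization over deterministic $v\ge0$, one realization of $(c,c_1,c_2)$ at a time. Then we check that the pointwise maximizer is a measurable function of the data, so it is itself an admissible random variable. For any admissible $v$ and every realization, the integrand is bounded above by its pointwise supremum over $v\ge0$. Taking expectations (monotonicity of $\mathbb E$) gives $\mathbb E[\cdot]\le\mathbb E[\sup_{v\ge0}(\cdot)]$. Conversely, if the pointwise supremum is attained at a measurable $v^\star(c)$ or $v^\star(c_1,c_2)$, plugging this $v^\star$ in attains the bound. We assume the relevant expectations $\mathbb E[c]$, $\mathbb E[\max\{c_1,c_2\}]$ are finite; otherwise both sides are $+\infty$ and the identities hold trivially.

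For (L1), fix a realization and consider $f(v)=\lambda v-\gamma|c-v|$ on $v\ge0$. It is piecewise linear and concave. Its slope is $\lambda+\gamma\ge0$ on $[0,c)$ and $\lambda-\gamma$ on $(c,\infty)$. If $\gamma\ge\lambda$, the maximum is attained at $v=c$ with value $\lambda c$. The choice $v^\star=c$ is measurable and nonnegative, so the maximum of the expectation equals $\lambda\mathbb E[c]$. If $\gamma<\lambda$, we take $v=c+M$ for a deterministic $M>0$. This gives $\mathbb E[f(v)]=\lambda\mathbb E[c]+(\lambda-\gamma)M\to\infty$ as $M\to\infty$, so the value is $+\infty$.

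For (L2), fix a realization and set $\underline c=\min\{c_1,c_2\}$, $\overline c=\max\{c_1,c_2\}$. Consider $g(v)=\lambda v-\alpha(|c_1-v|+|c_2-v|)$, again concave and piecewise linear, in three regimes:
\begin{itemize}
\item on $[0,\underline c)$ the slope is $\lambda+2\alpha>0$;
\item on $[\underline c,\overline c]$ the sum $|c_1-v|+|c_2-v|\equiv\overline c-\underline c=|c_1-c_2|$ is constant, so the slope is $\lambda\ge0$;
\item on $(\overline c,\infty)$ the slope is $\lambda-2\alpha$.
\end{itemize}
Hence, if $\lambda\le2\alpha$, a maximizer is $v^\star=\overline c=\max\{c_1,c_2\}$, with value $\lambda\max\{c_1,c_2\}-\alpha|c_1-c_2|$. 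When $\lambda=0$ any point of $[\underline c,\overline c]$ also works, but $\overline c$ is still a valid choice. Since $v^\star$ is measurable and nonnegative, taking expectations gives the stated formula. If $\lambda>2\alpha$, the choice $v=\overline c+M$ yields $\lambda\mathbb E[\overline c]-\alpha\mathbb E|c_1-c_2|+(\lambda-2\alpha)M\to\infty$.

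The only delicate point is the interchange of $\max_v$ and $\mathbb E$, i.e.\ justifying that the supremum over random variables equals the expectation of the pointwise supremum. I expect to handle it exactly as above: the upper bound follows from pointwise domination, and attainment from the explicit measurable maximizers $c$ and $\max\{c_1,c_2\}$. This avoids any measurable-selection theorem. The remaining steps are routine slope computations.
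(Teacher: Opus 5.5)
Your proof is correct and is essentially the paper's argument. Like the paper, you optimize the piecewise-linear concave integrand realization by realization, plug in the measurable maximizers $v=c$ and $v=\max\{c_1,c_2\}$, and shift by a deterministic constant to obtain $+\infty$ when $\gamma<\lambda$ or $\lambda>2\alpha$.

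You are somewhat more self-contained than the paper, which cites earlier work for the scalar slope computations, leaves the pointwise-domination upper bound implicit, and drops the $-\alpha\,\mathbb{E}|c_1-c_2|$ term in the divergent case of (L2). The one inaccuracy is your aside that non-integrable data make both sides $+\infty$: this fails in (L2), since for example $c_1=0$, $\mathbb{E}[c_2]=\infty$, $\lambda>0$ gives $\infty-\infty$ on the right. It is cleaner to simply assume integrability, as the paper implicitly does.
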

{\em Proof.}
Throughout, the maximum over $v\ge0$ is taken over all nonnegative random variables $v$. For each realization $\omega$ of the underlying randomness, the objective becomes an ordinary scalar expression; we optimize that expression and then take expectations.

\medskip
\noindent\emph{Proof of \textit{(L1)}.}
Fix a realization \(c=c(\omega)\ge 0\). For this fixed \(c\), the maximization
$
\max_{v\ge 0}\bigl(\lambda v-\gamma|c-v|\bigr)
$
is exactly as in~\cite{Bencherki2026}: it is finite if and only if \(\gamma\ge\lambda\), in which case it equals \(\lambda c\); if \(\gamma<\lambda\), it is \(+\infty\).

The same argument applies almost surely in the stochastic setting. Thus, if \(\gamma\ge\lambda\), choosing \(v=c\) gives
\[
\max_{v\ge0}\,\mathbb E\bigl[\lambda v-\gamma|c-v|\bigr]=\lambda\,\mathbb E[c].
\]
If \(\gamma<\lambda\), taking \(v=c+n\) yields
\[
\mathbb E\bigl[\lambda v-\gamma|c-v|\bigr]=\lambda\,\mathbb E[c]+(\lambda-\gamma)n,
\]
which diverges to \(+\infty\) as \(n\to\infty\). Hence,
\[
\max_{v\ge0}\,\mathbb E\bigl[\lambda v-\gamma|c-v|\bigr]
=
\begin{cases}
\lambda\,\mathbb E[c], & \gamma\ge \lambda,\\[2pt]
+\infty, & \gamma< \lambda,
\end{cases}
\]
which proves \textit{(L1)}.

\noindent\emph{Proof of \textit{(L2)}.}
Fix a realization \(c_1=c_1(\omega)\ge 0\) and \(c_2=c_2(\omega)\ge 0\). For these fixed values, the maximization
$
\max_{v\ge 0}\,\Bigl(\lambda v-\alpha\bigl(|c_1-v|+|c_2-v|\bigr)\Bigr)
$
is exactly as in~\cite{Bencherki2026}: it is finite if and only if \(\lambda\le 2\alpha\), in which case it equals \(\lambda\max\{c_1,c_2\}-\alpha|c_1-c_2|\); if \(\lambda>2\alpha\), it is \(+\infty\).

The same argument applies for almost every realization in the stochastic setting. Therefore, if \(\lambda\le 2\alpha\), choosing \(v=\max\{c_1,c_2\}\) gives
\[
\max_{v\ge0}\,\mathbb E\bigl[\lambda v-\alpha(|c_1-v|+|c_2-v|)\bigr]
=
\lambda\,\mathbb E\big[\max\{c_1,c_2\}\big]
-\alpha\,\mathbb E|c_1-c_2|.
\]
If \(\lambda>2\alpha\), then taking \(v=\max\{c_1,c_2\}+n\) yields
\[
\mathbb E\bigl[\lambda v-\alpha(|c_1-v|+|c_2-v|)\bigr]
=
\lambda\,\mathbb E\big[\max\{c_1,c_2\}\big]
+(\lambda-2\alpha)n,
\]
which diverges to \(+\infty\) as \(n\to\infty\). Hence,
\[
\max_{v\ge0}\,\mathbb E\bigl(\lambda v-\alpha(|c_1-v|+|c_2-v|)\bigr)
=
\begin{cases}
\displaystyle \lambda\,\mathbb E\big[\max\{c_1,c_2\}\big]
-\alpha\,\mathbb E|c_1-c_2|, & 0\le \lambda \le 2\alpha,\\[6pt]
\displaystyle +\infty, & \lambda>2\alpha,
\end{cases}
\]
which proves \textup{(L2)}.
\qedblack
\begin{lemma}\label{lemm1}
Let \(\gamma\geq1+a\), and consider the minimization problem
\[
\min_{\lvert u\rvert\leq\tfrac{a}{b}x}\max\{V_+(u),V_-(u),V_{\mathrm{avg}}(u)\},
\]
where
\[
\begin{aligned}
V_{\mathrm{avg}}(u)
&=
\mathbb E\!\Big[
x+ax+a^2x+(1+a-\gamma)\lvert bu\rvert
-\tfrac{\gamma}{2}\bigl(z^{(+)}+z^{(-)}\bigr)
\Big],\\
V_\pm(u)
&=
\mathbb E\!\Big[
x+ax\pm bu-\gamma z^{(\pm)}
\Big].
\end{aligned}
\]
At the current stage, \((x,z^{(+)},z^{(-)})\) is observed before \(u\) is chosen, so the expectations are taken only over the randomization of the control. If \(\lvert u\rvert\leq\tfrac{a}{b}x\) almost surely, then without loss of optimality the minimization may be restricted to controls satisfying \(\mathbb E[\lvert bu\rvert]=ax\).
\end{lemma}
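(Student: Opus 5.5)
The plan is to show that the objective depends on the randomized control only through two scalar moments, and that pushing the second moment to its upper limit never hurts. Since $(x,z^{(+)},z^{(-)})$ is observed before $u$ is drawn, linearity of expectation gives
\[
V_\pm(u)=x+ax\pm m-\gamma z^{(\pm)},\qquad
V_{\mathrm{avg}}(u)=x+ax+a^2x+(1+a-\gamma)\,s-\tfrac{\gamma}{2}\bigl(z^{(+)}+z^{(-)}\bigr),
\]
where $m\coloneqq\mathbb E[bu]$ and $s\coloneqq\mathbb E[\lvert bu\rvert]$. Any admissible $u$ satisfies $\lvert bu\rvert\le ax$ almost surely (recall $b>0$). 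By the remark following Problem~\ref{pb2}, its moments therefore lie in the set $\{(m,s):\lvert m\rvert\le s\le ax\}$. So the minimization over random controls becomes a minimization of $\max\{V_+,V_-,V_{\mathrm{avg}}\}$ over the achievable pairs $(m,s)$.

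The key observation is that $V_\pm$ depend only on $m$. Moreover, $V_{\mathrm{avg}}$ is nonincreasing in $s$, because $\gamma\ge1+a$ makes the coefficient $1+a-\gamma\le 0$.

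Given an arbitrary admissible $u$ with moments $(m,s)$, I will construct a replacement $\tilde u$ with $\mathbb E[b\tilde u]=m$ and $\mathbb E[\lvert b\tilde u\rvert]=ax\ge s$. For $x>0$, let $\tilde u$ take the values $\pm\tfrac{a}{b}x$ with probabilities $\theta$ and $1-\theta$, where $\theta=\tfrac12\bigl(1+\tfrac{m}{ax}\bigr)$. This $\theta$ lies in $[0,1]$ since $\lvert m\rvert\le ax$. This is exactly the two-point construction of the realization lemma in Section~\ref{Rem1}. Then $\tilde u$ is admissible almost surely, $\mathbb E[b\tilde u]=(2\theta-1)ax=m$, and $\mathbb E[\lvert b\tilde u\rvert]=ax$.

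Consequently $V_\pm(\tilde u)=V_\pm(u)$ and $V_{\mathrm{avg}}(\tilde u)=V_{\mathrm{avg}}(u)-(\gamma-1-a)(ax-s)\le V_{\mathrm{avg}}(u)$. Hence the max-objective of $\tilde u$ is no larger than that of $u$. Taking the infimum, the minimum over all admissible randomized controls equals the minimum over those with $\mathbb E[\lvert bu\rvert]=ax$, which is the claim.

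The degenerate case $x=0$ is handled separately: the constraint forces $u=0$ almost surely, so $\mathbb E[\lvert bu\rvert]=0=ax$ holds trivially.

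The only delicate step is the reduction to moments. It relies on the stage quantities $(x,z^{(+)},z^{(-)})$ being deterministic at the decision time, so that each expectation is affine in $u$ and involves only $u$ and $\lvert u\rvert$. I will state this explicitly, and note that the sign of $1+a-\gamma$ is where the hypothesis $\gamma\ge1+a$ is used.
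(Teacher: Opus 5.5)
Your proposal is correct and follows the paper's proof essentially step for step. Both reduce the objective to the moments $\mathbb E[bu]$ and $\mathbb E[\lvert bu\rvert]$, use the two-point randomization between $\pm\tfrac{a}{b}x$ to match any admissible mean while saturating $\mathbb E[\lvert bu\rvert]=ax$, and invoke $1+a-\gamma\le 0$ to show that $V_{\mathrm{avg}}$ does not increase while $V_\pm$ are unchanged. Your separate treatment of $x=0$ is a small improvement, since the paper's mixing probability divides by $ax$ and its argument never addresses that case.
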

{\em Proof.}
Fix a realization of the current state \((x,z^{(+)},z^{(-)})\). Since the controller observes this state before choosing \(u\), all expectations below are taken with respect to the policy randomization conditional on the observed state.
From \(|u|\le \tfrac{a}{b}x\) almost surely, it follows that
$
|bu|\le ax
$ with probability $1$
and therefore
$
\mathbb E[|bu|]\le ax.
$
Moreover, by Jensen's inequality,
\[
|b\,\mathbb E[u]|
=
|\mathbb E[bu]|
\le
\mathbb E[|bu|]
\le
ax.
\]
Hence every admissible control satisfies
$|b\,\mathbb E[u]|
\le
\mathbb E[|bu|]
\le
ax.
$
Conversely, fix any \(m\) with \(|m|\le ax\) and define
$
p\coloneqq\tfrac12\left(1+\tfrac{m}{ax}\right).
$
Since \(|m|\le ax\), we have \(p\in[0,1]\). Consider the randomized control
\[
u=
\begin{cases}
\dfrac{a}{b}x, & \text{with probability }p,\\[4pt]
-\dfrac{a}{b}x, & \text{with probability }1-p.
\end{cases}
\]
Both realizations satisfy \(|u|=\tfrac{a}{b}x\), so the control is
admissible almost surely. Its mean satisfies
\[
b\,\mathbb E[u]
=
b\left(
p\tfrac{a}{b}x-(1-p)\tfrac{a}{b}x
\right)
=(2p-1)ax=m,
\]
where the last equality follows from the definition of \(p\). Thus, the
mixing probability \(p\) can be chosen to realize any prescribed mean
\(m\in[-ax,ax]\). At the same time, both realizations have the maximal
admissible magnitude \(|bu|=ax\), and hence
$
\mathbb E[|bu|]=ax.
$
Therefore, the above
randomization realizes any admissible mean \(b\,\mathbb E[u]=m\) while
simultaneously attaining the largest possible value
\(\mathbb E[|bu|]=ax\).
Now observe that \(V_+(u)\) and \(V_-(u)\) depend on \(u\) only through the mean \(\mathbb E[u]\), whereas \(V_{\mathrm{avg}}(u)\) depends on \(u\) only through \(\mathbb E[|bu|]\):
\[
V_{\mathrm{avg}}(u)
=
\mathbb E\!\Big[
x+ax+a^2x
+(1+a-\gamma)|bu|
-\tfrac{\gamma}{2}\bigl(z^{(+)}+z^{(-)}\bigr)
\Big].
\]
Since \(\gamma\ge 1+a\), we have
$
1+a-\gamma\le0.
$
Therefore, for fixed \(\mathbb E[u]\), the quantity \(V_{\mathrm{avg}}(u)\) is nonincreasing in \(\mathbb E[|bu|]\), while \(V_+(u)\) and \(V_-(u)\) remain unchanged.
Consequently, given any admissible control \(u\), we may replace it by another admissible control \(\tilde u\) satisfying
\[
\mathbb E[\tilde u]=\mathbb E[u],
\qquad
\mathbb E[|b\tilde u|]=ax.
\]
Then
$
V_+(\tilde u)=V_+(u),
V_-(\tilde u)=V_-(u),
$
and
$
V_{\mathrm{avg}}(\tilde u)
\le
V_{\mathrm{avg}}(u).
$
Hence
\[
\max\{V_{\mathrm{avg}}(\tilde u),V_+(\tilde u),V_-(\tilde u)\}
\le
\max\{V_{\mathrm{avg}}(u),V_+(u),V_-(u)\}.
\]
It follows that there always exists an optimal control satisfying
$
\mathbb E[|bu|]=ax.
$
\qedblack
\section{Proof of main results}\label{main-res}
\subsection{Proof of Theorem~\ref{thm:l1-dp}}\label{dp-general-case-proof}
Recall that, for each \((A,B)\in\mathcal M\), the model-specific history
\(z^{(A,B)}\in\mathbb R_+^n\) evolves according to
\begin{align}\label{hist-var}
z^{(A,B)}_+
&=
z^{(A,B)}
+
\lvert v-Ax-Bu\rvert,
\qquad
x_+=v,\quad v\geq 0.
\end{align}
We collect all model-specific histories in the data collection \(Z\). All
expectations below are taken with respect to the randomization of the
control policy.
Define the dynamic programming operators
\[
\mathcal F_u V(x,Z)
\coloneqq
\max_{v\geq0}\left\{
\mathbb E\!\left[s^\top x+r^\top u\right]+V(v,Z_+)
\right\}
,
\qquad
\mathcal F V(x,Z)
\coloneqq
\min_{u \in \mathcal U(x)}
\mathcal F_u V(x,Z).
\]
The value iteration is initialized and updated according to
\begin{align}\label{v0vk}
V_0(x,Z)
&\coloneqq
-\min_{(A,B)\in\mathcal M}
\mathbb E\!\left[\gamma^\top z^{(A,B)}\right],
\qquad
V_{k+1}
=
\mathcal F V_k,
\quad k\geq 0.
\end{align}

\noindent\emph{Proof of \textit{(i)}--\textit{(ii)}.}
We first show that the sequence \(V_0,V_1,V_2,\ldots\) is monotonically
nondecreasing. By definition,
\begin{multline*}
V_1(x,Z)
=
\min_{u  \in \mathcal U(x)}\max_{v\geq0}\left\{
\mathbb E\!\left[s^\top x+r^\top u\right]+V_0(v,Z_+)
\right\}
=
\min_{u  \in \mathcal U(x)}
\max_{v\geq 0}
\Biggl\{
\mathbb E\!\left[s^\top x+r^\top u\right]
\\
-
\min_{(A,B)\in\mathcal M}
\mathbb E\!\left[
\gamma^\top
\left(
z^{(A,B)}
+
\lvert v-Ax-Bu\rvert
\right)
\right]
\Biggr\}.
\end{multline*}
Assumption~\ref{ass:stage cost-pos} reads
$
\mathbb E\!\left[s^\top x+r^\top u\right]\geq 0.
$
Therefore, 
\begin{multline*}
\mathbb E\!\left[s^\top x+r^\top u\right]
-
\min_{(A,B)\in\mathcal M}
\mathbb E\!\left[
\gamma^\top
\left(
z^{(A,B)}+\lvert v-Ax-Bu\rvert
\right)
\right]
\\
\geq
-
\min_{(A,B)\in\mathcal M}
\mathbb E\!\left[
\gamma^\top
\left(
z^{(A,B)}+\lvert v-Ax-Bu\rvert
\right)
\right].
\end{multline*}
Because the adversary may choose \(v\) as a causal response to the realized
control input, the choice
$
v=A x+B u
$
is admissible and satisfies \(v\geq 0\) almost surely since $u \in \mathcal{U}(x)$. For this choice,
\begin{multline*}
V_1(x,Z)
\ge
-\max_{u\in\mathcal U(x)}
\min_{(A,B)\in\mathcal M}
\min_{v\ge 0}
\mathbb E\!\left[
\gamma^\top
\left(
z^{(A,B)}+\lvert v-Ax-Bu\rvert
\right)
\right]
\\
=
-\min_{(A,B)\in\mathcal M}
\mathbb E\!\left[\gamma^\top z^{(A,B)}\right]
=
V_0(x,Z).
\end{multline*}
Consequently,
$
V_1(x,Z)\geq V_0(x,Z).
$
Since \(\mathcal F\) is order preserving, induction gives
\[
V_{k+1}
=
\mathcal F V_k
\geq
\mathcal F V_{k-1}
=
V_k,
\qquad k\geq 1.
\]
Thus, \(V_0,V_1,V_2,\ldots\) is monotonically nondecreasing.

Fix \(N\geq 0\). Consider the \(N\)-stage truncated version of the original
minimax problem:
\begin{equation}\label{eq:FH-lb}
	\begin{aligned}
		\inf_{\mu}\;
		\sup_{\substack{(A,B)\in\mathcal M\\
				w\geq-(Ax+Bu)}}
		&\;
		\mathbb E\!\left[
		\sum_{t=0}^{N}
		s^\top x_t+r^\top u_t-\gamma^\top|w_t|
		\right]
		\\
		\text{s.t.}\quad
		&
		x_{t+1}
		=
		Ax_t+Bu_t+w_t,
		\qquad
		x_0\in\mathbb R_+^n.
	\end{aligned}
\end{equation}
The value of Problem~\ref{prob:l1-robust-control} is bounded below by the
value of~\eqref{eq:FH-lb}. Moreover, the value of~\eqref{eq:FH-lb} is
nondecreasing in \(N\), and the limit equals the value of
Problem~\ref{prob:l1-robust-control}.
Introducing the change of variables
$
v_t\coloneqq x_{t+1}
$
and using the model-specific histories in~\eqref{hist-var} renders
\eqref{eq:FH-lb} equivalent to
\begin{equation}\label{eq:FH-change}
\begin{aligned}
\inf_{\eta}\;
\sup_{v\geq 0}
\Biggl\{
&
\mathbb E\!\left[
\sum_{t=0}^{N}
s^\top x_t+r^\top u_t
\right]
-
\min_{(A,B)\in\mathcal M}
\mathbb E\!\left[
\gamma^\top z_{N+1}^{(A,B)}
\right]
\Biggr\}
\\
\text{s.t.}\quad
&
x_{t+1}=v_t,
\qquad
x_0\in\mathbb R_+^n,
\\
&
z_{t+1}^{(A,B)}
=
z_t^{(A,B)}
+
\lvert v_t-Ax_t-Bu_t\rvert,\quad
z_0^{(A,B)}=0,
\qquad
(A,B)\in\mathcal M,
\\
&
Z_t
\coloneqq
\left\{
z_t^{(A,B)}
\right\}_{(A,B)\in\mathcal M},
\qquad
Z_0
\coloneqq
\left\{
0
\right\}_{(A,B)\in\mathcal M},
\\
&
u_t
=
\eta_t(x_t,Z_t),
\qquad
u_t\in\mathcal U(x_t)
\quad\text{almost surely},\,
t=0,\ldots,N.
\end{aligned}
\end{equation}
A standard minimax dynamic programming argument, with terminal cost and
recursion given in~\eqref{v0vk}, shows that the value of
\eqref{eq:FH-change} is
$
V_{N+1}(x_0,0).
$
Thus, Problem~\ref{prob:l1-robust-control} has a finite value \(J_\star(x_0)\)
if and only if the nondecreasing sequence
$
\left\{
V_k(x_0,0)
\right\}_{k\geq 0}
$
is bounded above. In that case,
$
V_\star(x_0,0)
=
\lim_{N\to\infty}V_N(x_0,0)
=
J_\star(x_0).
$
Moreover, \(V_k(x,Z)\) is nonincreasing with respect to each component of the
history collection \(Z\). This property holds for \(V_0\) and is preserved by
the operator \(\mathcal F\), since each history is updated by adding a
nonnegative vector. Therefore,
$
V_k(x,Z)\leq V_k(x,0)
$
for every nonnegative history collection \(Z\). Hence, if \(V_\star(x,0)\) is
finite, then \(V_\star(x,Z)\) exists and is finite for every nonnegative \(Z\);
indeed,
$
V_\star(x,Z)
=
\lim_{k\to\infty}V_k(x,Z)
\leq
V_\star(x,0).
$

If Problem~\ref{prob:reformulated} has a finite value, then
\(V_k(x_0,0)\) is bounded above by that value for every \(k\). Therefore,
$
V_\star(x_0,0)
=
\lim_{k\to\infty}V_k(x_0,0)
$
is finite. This proves items~\textit{(i)}--\textit{(ii)}.

\medskip
\noindent\emph{Proof of \textit{(iii)}.}
Suppose there exist a finite-valued function \(\overbar V\) and a randomized
policy \(\overbar\eta\) such that
$
V_0\leq\overbar V
$ and $
\mathcal F_{\overbar\eta}\overbar V
\leq
\overbar V.
$
Define the sequence \(W_0,W_1,W_2,\ldots\) recursively by
\[
W_0\coloneqq V_0,
\qquad
W_{k+1}
\coloneqq
\mathcal F_{\overbar\eta}W_k.
\]
By dynamic programming,
\begin{align*}
W_{N+1}(x,0)
=
\sup_{v\geq 0}
\Biggl\{
\mathbb E\!\left[
\sum_{t=0}^{N}
\left(
s^\top x_t+r^\top u_t
\right)
\right]
-
\min_{(A,B)\in\mathcal M}
\mathbb E\!\left[
\gamma^\top z_{N+1}^{(A,B)}
\right]
\Biggr\},
\end{align*}
where
$
x_{t+1}=v_t$,
$
u_t=\overbar\eta_t(x_t,Z_t),
$
and \(Z_t\) is generated by the model-specific history update
in~\eqref{hist-var}. Thus, the value achieved by the policy
\(\overbar\mu\) induced by \(\overbar\eta\) is
$
J_{\overbar\mu}(x_0)
=
\lim_{N\to\infty}W_N(x_0,0)
$ and Problem~\eqref{prob:reformulated} is bounded above by \(\lim_{k\to\infty} W_k(x_0,0)\).

The definitions of \(V_k\) and \(W_k\), together with the order preserving
property of the Bellman operators, yield
$
V_k\leq W_k\leq\overbar V,
$  $k\geq 0$.
Indeed, the first inequality follows from
$
\mathcal F V
\leq
\mathcal F_{\overbar\eta}V,
$
whereas the second follows from \(W_0=V_0\leq\overbar V\) and
$
W_{k+1}
=
\mathcal F_{\overbar\eta}W_k
\leq
\mathcal F_{\overbar\eta}\overbar V
\leq
\overbar V.$
Taking limits gives
\[
V_\star(x,Z)
\leq
\lim_{k\to\infty}W_k(x,Z)
\leq
\overbar V(x,Z).
\]
In particular,
$
V_\star(x_0,0)
\leq
J_{\overbar\mu}(x_0)
\leq
\overbar V(x_0,0).
$
Hence the policy induced by 
\begin{align*}
    \overbar\mu_t(x_0,\ldots,x_t;u_0,\ldots,u_{t-1})
\coloneqq
\overbar\eta\!\left(
x_t,\left\{
\sum_{\tau=0}^{t-1}
\lvert x_{\tau+1}-Ax_\tau-Bu_\tau\rvert
\right\}_{(A,B)\in\mathcal M}
\right)
\end{align*}
 satisfies the stated performance
bound. If \(\overbar V=V_\star\), then the induced policy is optimal.

\qedblack

\subsection{Proof of Theorem~\ref{thm:bellman-termes}}\label{thm:bellman-branches-append}
We proceed in four steps, corresponding to parts \textit{(i)}--\textit{(iv)} of
Theorem~\ref{thm:bellman-termes}. Let \(a\ge\tfrac{1}{2}\) and, without loss
of generality, \(b>0\). Recall that the Bellman operator is given by
\[
\begin{aligned}
\mathcal{F}V(x,z^{(+)},z^{(-)})
=
\min_{|u|\leq \tfrac{a}{b}x}
\ \max_{v\geq 0}\,
\Bigg(
\mathbb E[x]
+
V\!\left(
v,\,
z^{(+)}+\left|v-ax-bu\right|,
z^{(-)}+\left|v-ax+bu\right|
\right)
\Bigg)
\end{aligned}
\]
and that the value iteration sequence is initialized and updated according to~\eqref{vk}. For completeness, we show that starting from the terminal cost in~\eqref{eq:scalar-sign-minimax-min-history}, given by
$
 J_T\coloneqq -\gamma\min\{z^{(+)},z^{(-)}\},
$
one Bellman update gives the model-based cost
$
V_0(x,z^{(+)},z^{(-)})
\coloneqq
\max\Big\{
\mathbb E[x-\gamma z^{(+)}],\;
\mathbb E[x-\gamma z^{(-)}]
\Big\}.
$
First, rewrite the terminal cost as
$
J_T=\max\big\{-\gamma z^{(+)},-\gamma z^{(-)}\big\}.
$
Applying the Bellman operator gives
\begin{multline*}
\mathcal F J_T(x,z^{(+)},z^{(-)})
=
\min_{|u|\le \tfrac{a}{b}x}\max_{v\ge0}
\max\Big\{
\underbrace{\mathbb E\!\left[x-\gamma\bigl(z^{(+)}+|v-ax-bu|\bigr)\right]}_{\Phi_+(u,v)},\\
\underbrace{\mathbb E\!\left[x-\gamma\bigl(z^{(-)}+|v-ax+bu|\bigr)\right]}_{\Phi_-(u,v)}
\Big\}.
\end{multline*}
For a fixed \(u\), we now maximize each term over \(v\ge0\). By Lemma~\ref{lem:sup-exp}\textit{(L1)}, the maximizer is
$
v_\pm^\star=ax\pm bu,
$
which is admissible because \(|u|\le \tfrac{a}{b}x\) implies \(ax\pm bu\ge0\) almost surely. Therefore,
\[
\max_{v\ge0}\Phi_\pm(u,v)
=
\mathbb E\!\left[
x-\gamma\bigl(z^{(\pm)}+|v_\pm^\star-ax\mp bu|\bigr)
\right]
=
\mathbb E[x-\gamma z^{(\pm)}].
\]
Hence the \(v\)-maximization removes the absolute value term completely, and the result no longer depends on \(u\). Thus
\begin{multline*}
  \mathcal F J_T(x,z^{(+)},z^{(-)})
=V_0(x,z^{(+)},z^{(-)})
\Big\}=
\min_{|u|\le \tfrac{a}{b}x}
\max\Big\{
\mathbb E[x-\gamma z^{(+)}],\;
\mathbb E[x-\gamma z^{(-)}]\\= \max\Big\{
\mathbb E[x-\gamma z^{(+)}],\;
\mathbb E[x-\gamma z^{(-)}]
\Big\}
\end{multline*}
Therefore, starting from the terminal cost \(J_T=-\gamma\min\{z^{(+)},z^{(-)}\}\), one Bellman step indeed yields the model-based cost \(V_0\) in~\eqref{vk}.

\medskip
\noindent\emph{Proof of~Theorem~\ref{thm:bellman-termes}(i).} We prove this by following an induction argument. To this end, we start by computing $V_1=\mathcal{F}V_0$. For any admissible control \(u\), it holds that
\begin{multline*}
\mathcal F V_0(x,z^{(+)},z^{(-)})
=
\min_{|u|\le \tfrac{a}{b}x}\ \max_{v\ge 0}
\max\Big(
\mathbb E\!\left[
x+v-\gamma\bigl(z^{(+)}+|v-a x-b u|\bigr)
\right],\\
\mathbb E\!\left[
x+v-\gamma\bigl(z^{(-)}+|v-a x+b u|\bigr)
\right]
\Big)
=
\min_{|u|\le \tfrac{a}{b}x}
\max\Bigg\{
\max_{v\ge0}
\mathbb E\!\left[
x+v-\gamma\bigl(z^{(+)}+|v-a x-b u|\bigr)
\right],\\
\max_{v\ge0}
\mathbb E\!\left[
x+v-\gamma\bigl(z^{(-)}+|v-a x+b u|\bigr)
\right]
\Bigg\}.
\end{multline*}
By Lemma~\ref{lem:sup-exp}\textit{(L1)}, the maximization is finite if and only if \(\gamma\geq1\). The same lemma gives
\[
\max_{v\geq0}\mathbb E\left[v-\gamma|v-ax\mp bu|\right]=\mathbb E[ax\pm bu],
\]
with maximizer \(v_\pm^\star=ax\pm bu\geq0\) almost surely, since \(|u|\leq\tfrac{a}{b}x\) almost surely. Therefore,
\[
\max_{v\geq0}
\mathbb E\!\left[
x+v-\gamma\left(z^{(\pm)}+|v-ax\mp bu|\right)
\right]
=
\mathbb E\!\left[
(1+a)x\pm bu-\gamma z^{(\pm)}
\right].
\]
Consequently, we obtain 
\begin{align*}
\mathcal F V_0(x,z^{(+)},z^{(-)})
=
\min_{|u|\leq \tfrac{a}{b}x}
\max\Bigg\{
\underbrace{
\mathbb E\!\left[(1+a)x+bu-\gamma z^{(+)}\right]
}_{\eqqcolon\,V_+(u)},
\;
\underbrace{
\mathbb E\!\left[(1+a)x-bu-\gamma z^{(-)}\right]
}_{\eqqcolon\,V_-(u)}
\Bigg\}.
\end{align*}
Next, we proceed with the minimization over $u$. To this end, consider the difference 
\begin{multline*}
V_+(u)-V_-(u)
=
\mathbb E\!\Big[(1+a)x+bu-\gamma z^{(+)}\Big]
-
\mathbb E\!\Big[(1+a)x-bu-\gamma z^{(-)}\Big]
\\
=
\mathbb E\!\Big[2bu+\gamma\bigl(z^{(-)}-z^{(+)}\bigr)\Big].
\end{multline*}
We distinguish three regimes depending on the states $(x,z^{(+)},z^{(-)})$.
\begin{itemize}
\item[\textit{(1)}] \emph{Regime 1:
\(\gamma\bigl(z^{(+)}-z^{(-)}\bigr)\leq-2ax\).}
Since \(|bu|\leq ax\) almost surely, we have \(bu\geq-ax\), and therefore
\[
V_+(u)-V_-(u)
=
\mathbb E\!\Big[2bu+\gamma\bigl(z^{(-)}-z^{(+)}\bigr)\Big]
\geq
\mathbb E\!\Big[-2ax+\gamma\bigl(z^{(-)}-z^{(+)}\bigr)\Big]
\geq0.
\]
Hence, \(V_+(u)\geq V_-(u)\), so \(V_-(u)\) never determines the maximum. The
minimizer is therefore obtained by minimizing \(V_+(u)\), which yields
$
u^\star=-\tfrac{a}{b}x.
$
The corresponding value is
$
\mathcal F V_0
=
\mathbb E\!\left[x-\gamma z^{(+)}\right].
$

\item[\textit{(2)}] \emph{Regime 2:
\(\gamma\bigl(z^{(+)}-z^{(-)}\bigr)\geq 2ax\).}
Since \(|bu|\leq ax\) almost surely, we also have \(-bu\geq-ax\), and therefore
\[
V_-(u)-V_+(u)
=
\mathbb E\!\Big[-2bu+\gamma\bigl(z^{(+)}-z^{(-)}\bigr)\Big]
\geq
\mathbb E\!\Big[-2ax+\gamma\bigl(z^{(+)}-z^{(-)}\bigr)\Big]
\geq0.
\]
Hence, \(V_-(u)\geq V_+(u)\), so \(V_+(u)\) never determines the maximum. The
minimizer is therefore obtained by minimizing \(V_-(u)\), which yields
$
u^\star=\tfrac{a}{b}x.
$
The corresponding value is
$
\mathcal F V_0
=
\mathbb E\!\left[x-\gamma z^{(-)}\right].
$
\item[\textit{(3)}] \emph{Regime 3:
\(\bigl|\gamma(z^{(+)}-z^{(-)})\bigr|<2ax\).}
Neither term dominates in this regime. We therefore choose \(u\) so that the
two terms are equal:
\begin{align}\label{v1u}
     V_+(u)=(1+a)x+bu-\gamma z^{(+)}
=
(1+a)x-bu-\gamma z^{(-)}= V_-(u),
\end{align}
which happens at 
$
u^\star
=
\tfrac{\gamma\bigl(z^{(+)}-z^{(-)}\bigr)}{2b}.
$
The regime condition ensures that this choice is admissible, since
$
\bigl|
\tfrac{\gamma\bigl(z^{(+)}-z^{(-)}\bigr)}{2b}
\bigr|
<
\tfrac{a}{b}x.
$
Substituting this choice into~\eqref{v1u} yields the value
$
\mathcal F V_0
=
\mathbb E\!\left[
(1+a)x-\tfrac{\gamma}{2}\bigl(z^{(+)}+z^{(-)}\bigr)
\right].
$
\end{itemize}
Combining the three regimes, we obtain the optimal policy 
\[
\begin{cases}
u^\star=-\tfrac{a}{b}x,
& z^{(-)}-z^{(+)}>\tfrac{2a}{\gamma}x,\\[8pt]
u^\star=\tfrac{\gamma\bigl(z^{(+)}-z^{(-)}\bigr)}{2b},
& \bigl|z^{(+)}-z^{(-)}\bigr|\le \tfrac{2a}{\gamma}x,\\[10pt]
u^\star=\tfrac{a}{b}x,
& z^{(+)}-z^{(-)}>\tfrac{2a}{\gamma}x,
\end{cases}
\]
which results in the following value function
\[
V_1(x,z^{(+)},z^{(-)})
=
\max\Bigg\{
\mathbb E\!\Big[
(1+a)x-\tfrac{\gamma}{2}\bigl(z^{(+)}+z^{(-)}\bigr)
\Big],\;
\mathbb E[x-\gamma z^{(+)}],\;
\mathbb E[x-\gamma z^{(-)}]
\Bigg\}.
\]
$V_1$ is of the form given in Theorem~\ref{thm:bellman-termes}\textit{(i)}. Indeed, since
\(h_0(a,\gamma)=1\), \(h_1(a,\gamma)=1+a\), and the condition
$
\gamma\geq \max\{h_0(a,\gamma)\}=\gamma\geq1
$
is exactly the finiteness condition for \(V_1\). Moreover, when \(k=1\),
the form given in Theorem~\ref{thm:bellman-termes}\textit{(i)} reduces to
\[
V_1(x,z^{(+)},z^{(-)})
=
\max\Bigg\{
\mathbb E[x-\gamma z^{(+)}],\;
\mathbb E[x-\gamma z^{(-)}],\;
\mathbb E\!\Big[
h_1(a,\gamma)x-\tfrac{\gamma}{2}\bigl(z^{(+)}+z^{(-)}\bigr)
\Big]
\Bigg\},
\]
since the last two terms coincide.

We proceed by induction to show that, for every \(k\geq1\),
\begin{multline}\label{eq:Vk-claim1}
V_k(x,z^{(+)},z^{(-)})
=
\max\Bigg\{
\mathbb E\!\left[x-\gamma z^{(+)}\right],\;
\mathbb E\!\left[x-\gamma z^{(-)}\right],\;
\mathbb E\!\Big[
h_1(a,\gamma)x-\tfrac{\gamma}{2}\bigl(z^{(+)}+z^{(-)}\bigr)
\Big],\\
\mathbb E\!\Big[
h_k(a,\gamma)x-\tfrac{\gamma}{2}\bigl(z^{(+)}+z^{(-)}\bigr)
\Big]
\Bigg\},
\end{multline}
if and only if
$
\gamma\geq \max\{h_0(a,\gamma),h_1(a,\gamma),\ldots,h_{k-1}(a,\gamma)\},
$
where
\begin{align}\label{hk-append}
    h_0(a,\gamma)=1,\qquad
    h_1(a,\gamma)=1+a,\qquad
    h_{j+1}(a,\gamma)
    =
    1+2a\,h_j(a,\gamma)-a\gamma,
    \quad j\geq1.
\end{align}

Fix \(k\geq2\) and suppose
that the claim holds at stage \(k-1\); namely, whenever
$$
\gamma\geq \max\{h_0(a,\gamma),h_1(a,\gamma),\ldots,h_{k-2}(a,\gamma)\},
$$
it holds that 
\begin{multline}\label{eq:Vk-1-form}
V_{k-1}(x,z^{(+)},z^{(-)})
=
\max\Bigg\{
\mathbb E\!\left[x-\gamma z^{(+)}\right],\;
\mathbb E\!\left[x-\gamma z^{(-)}\right],\;
\mathbb E\!\Big[
h_1(a,\gamma)x-\tfrac{\gamma}{2}\bigl(z^{(+)}+z^{(-)}\bigr)
\Big],\\
\mathbb E\!\Big[
h_{k-1}(a,\gamma)x-\tfrac{\gamma}{2}\bigl(z^{(+)}+z^{(-)}\bigr)
\Big]
\Bigg\}.
\end{multline}
We now apply the Bellman operator to~\eqref{eq:Vk-1-form} and show that
\(V_k=\mathcal F V_{k-1}\) is finite if and only if
$
\gamma\geq \max\{h_0(a,\gamma),h_1(a,\gamma),\ldots,h_{k-1}(a,\gamma)\}
$. Whenever this condition holds, the resulting
expression for \(V_k\) is precisely~\eqref{eq:Vk-claim1}. To this end, substituting the induction hypothesis \eqref{eq:Vk-1-form}, we obtain
\begin{multline*}
V_k(x,z^{(+)},z^{(-)})
=
\min_{|u|\le \tfrac{a}{b}x}\ \max_{v\ge0}
\max\Big(
\mathbb E\!\Big[
x+v-\gamma\bigl(z^{(+)}+|v-ax-bu|\bigr)
\Big],\\
\mathbb E\!\Big[
x+v-\gamma\bigl(z^{(-)}+|v-ax+bu|\bigr)
\Big],
\mathbb E\!\Big[
x+h_1(a,\gamma)v
-\tfrac{\gamma}{2}\bigl(z^{(+)}+|v-ax-bu|+z^{(-)}+|v-ax+bu|\bigr)
\Big],\\
\mathbb E\!\Big[
x+h_{k-1}(a,\gamma)v
-\tfrac{\gamma}{2}\bigl(z^{(+)}+|v-ax-bu|+z^{(-)}+|v-ax+bu|\bigr)
\Big]
\Big).
\end{multline*}
Exchanging \(\max_{v\ge0}\) with the outer maximum yields
\begin{multline*}
V_k(x,z^{(+)},z^{(-)})
=
\min_{|u|\le \tfrac{a}{b}x}
\max\Bigg\{
\max_{v\ge0}\mathbb E\!\Big[
x+v-\gamma\bigl(z^{(+)}+|v-ax-bu|\bigr)
\Big],\\
\max_{v\ge0}\mathbb E\!\Big[
x+v-\gamma\bigl(z^{(-)}+|v-ax+bu|\bigr)
\Big],
\max_{v\ge0}\mathbb E\!\Big[
x+h_1(a,\gamma)v
-\tfrac{\gamma}{2}\bigl(z^{(+)}+|v-ax-bu|+z^{(-)}+|v-ax+bu|\bigr)
\Big],\\
\max_{v\ge0}\mathbb E\!\Big[
x+h_{k-1}(a,\gamma)v
-\tfrac{\gamma}{2}\bigl(z^{(+)}+|v-ax-bu|+z^{(-)}+|v-ax+bu|\bigr)
\Big]
\Bigg\}.
\end{multline*}
Assume \(\gamma\ge1\) and \(ax\pm bu\ge0\). Following Lemma~\ref{lem:sup-exp}(L1),
\[
\max_{v\ge0}\mathbb E\!\Big[v-\gamma|ax\pm bu-v|\Big]
=
\mathbb E[ax\pm bu],
\]
with maximizer \(v^\star=ax\pm bu\). Thus, the first two terms become
\[
\mathbb E[(1+a)x+bu-\gamma z^{(+)}],
\qquad
\mathbb E[(1+a)x-bu-\gamma z^{(-)}].
\]
For the average terms, we consider
\[
\max_{v\geq0}
\mathbb E\!\Big[
h_j(a,\gamma)v
-\tfrac{\gamma}{2}|ax+bu-v|
-\tfrac{\gamma}{2}|ax-bu-v|
\Big],
\qquad j\in\{1,k-1\}.
\]
By Lemma~\ref{lem:sup-exp}(L2), these maxima are finite if and only if
$\gamma\ge h_1(a,\gamma)$
 and  
$\gamma\ge h_{k-1}(a,\gamma)$,
respectively. In both cases the maximizer is
$
v^\star=ax+|bu|,
$
and the corresponding optimal value is
\[
\mathbb E\!\Big[
h_j(a,\gamma)ax+\bigl(h_j(a,\gamma)-\gamma\bigr)|bu|
\Big],
\qquad j\in\{1,k-1\}.
\]
Therefore the two average terms simplify to
\[
\mathbb E\!\Big[
x+h_j(a,\gamma)ax+\bigl(h_j(a,\gamma)-\gamma\bigr)|bu|
-\tfrac{\gamma}{2}\bigl(z^{(+)}+z^{(-)}\bigr)
\Big],
\qquad j\in\{1,k-1\}.
\]

Collecting all four terms, we obtain
\begin{multline*}
V_k(x,z^{(+)},z^{(-)})
=
\min_{|u|\le \tfrac{a}{b}x}
\max\Big\{
\mathbb E[(1+a)x+bu-\gamma z^{(+)}],\;
\mathbb E[(1+a)x-bu-\gamma z^{(-)}],\\
\mathbb E\!\Big[
x+h_1(a,\gamma)ax+\bigl(h_1(a,\gamma)-\gamma\bigr)|bu|
-\tfrac{\gamma}{2}\bigl(z^{(+)}+z^{(-)}\bigr)
\Big],\\
\mathbb E\!\Big[
x+h_{k-1}(a,\gamma)ax+\bigl(h_{k-1}(a,\gamma)-\gamma\bigr)|bu|
-\tfrac{\gamma}{2}\bigl(z^{(+)}+z^{(-)}\bigr)
\Big]
\Big\},
\end{multline*}
if and only if $
\gamma\geq \max\{h_0(a,\gamma),h_1(a,\gamma),\ldots,h_{k-1}(a,\gamma)\}.
$
Next, we proceed with the minimization over $u$. The minimization over \(u\) now has the following structure
\[
V_k(x,z^{(+)},z^{(-)})
=
\min_{|u|\leq \tfrac{a}{b}x}
\max\left\{
V_+(u),\,
V_-(u),\,
V_{\mathrm{avg}}^{(2)}(u),\,
V_{\mathrm{avg}}^{(k)}(u)
\right\},
\]
where
\[
\begin{aligned}
V_{\pm}(u)
&\coloneqq
\mathbb E\!\Big[(1+a)x\pm bu-\gamma z^{(\pm)}\Big],\\
V_{\mathrm{avg}}^{(j+1)}(u)
&\coloneqq
\mathbb E\!\Big[
x+a h_j(a,\gamma)x
+\bigl(h_j(a,\gamma)-\gamma\bigr)|bu|
-\tfrac{\gamma}{2}\bigl(z^{(+)}+z^{(-)}\bigr)
\Big],
\qquad j\in\{1,k-1\}.
\end{aligned}
\]
Under the finiteness conditions established above,
\(h_1(a,\gamma)-\gamma\leq0\) and
\(h_{k-1}(a,\gamma)-\gamma\leq0\). Hence both average terms are minimized by
maximizing \(\mathbb E[|bu|]\), see Lemma~\ref{lemm1} for further details. We distinguish three regimes.
\begin{itemize}
\item[\textit{(1)}] \emph{Regime 1:
\(\gamma(z^{(-)}-z^{(+)})\geq 2ax\).}
Since \(|bu|\leq ax\) almost surely,
\[
V_+(u)-V_-(u)
=
\mathbb{E}\!\Big[2bu+\gamma(z^{(-)}-z^{(+)})\Big]
\geq
\mathbb{E}\!\Big[-2ax+\gamma(z^{(-)}-z^{(+)})\Big]
\geq 0.
\]
Thus, \(V_+(u)\geq V_-(u)\), so \(V_-(u)\) does not determine the maximum.

The deterministic control \(u^\star=-\tfrac{a}{b}x\) simultaneously minimizes \(V_+(u)\) and maximizes \(\mathbb{E}[|bu|]\), and hence it is also the optimal minimizer of the average terms. Substituting this control and using
\[
h_2(a,\gamma)=1+2a\,h_1(a,\gamma)-a\gamma,
\qquad
h_k(a,\gamma)=1+2a\,h_{k-1}(a,\gamma)-a\gamma,
\]
gives
\[
V_k
=
\max\Bigg\{
\mathbb{E}\!\left[x-\gamma z^{(+)}\right],\;
\mathbb{E}\!\Big[
h_2(a,\gamma)x-\tfrac{\gamma}{2}\bigl(z^{(+)}+z^{(-)}\bigr)
\Big],\;
\mathbb{E}\!\Big[
h_k(a,\gamma)x-\tfrac{\gamma}{2}\bigl(z^{(+)}+z^{(-)}\bigr)
\Big]
\Bigg\}.
\]
Notice that the condition \(\gamma\ge \max_{0\le j\le k-1} h_j(a,\gamma)\), together with the regime condition \(\gamma\bigl(z^{(-)}-z^{(+)}\bigr)\ge 2ax\)
is not sufficient to drop the averaged terms from the maximization. Doing so would require an additional condition on \(\gamma\). However, the optimal minimizing policy is the same for both averaged terms and the \(+\) term.
\item[\textit{(2)}] \emph{Regime 2:
\(\gamma(z^{(+)}-z^{(-)})\geq2ax\).}
Similarly,
\[
V_-(u)-V_+(u)
=
\mathbb E\!\Big[-2bu+\gamma(z^{(+)}-z^{(-)})\Big]
\geq
\mathbb E\!\Big[-2ax+\gamma(z^{(+)}-z^{(-)})\Big]
\geq0.
\]
Hence, \(V_-(u)\geq V_+(u)\), and the minimizing deterministic control is
$
u^\star=\tfrac{a}{b}x.
$
Consequently,
\[
V_k
=
\max\Bigg\{
\mathbb E\!\left[x-\gamma z^{(-)}\right],\;
\mathbb E\!\Big[
h_2(a,\gamma)x-\tfrac{\gamma}{2}(z^{(+)}+z^{(-)})
\Big],\;
\mathbb E\!\Big[
h_k(a,\gamma)x-\tfrac{\gamma}{2}(z^{(+)}+z^{(-)})
\Big]
\Bigg\}.
\]

\item[\textit{(3)}] \emph{Regime 3:
\(\bigl|\gamma(z^{(+)}-z^{(-)})\bigr|<2ax\).}
In this regime, neither \(V_+(u)\) nor \(V_-(u)\) dominates. We therefore choose
a randomized control that equalizes the two terms in expectation.
Since the controller observes \((x,z^{(+)},z^{(-)})\) before acting, the
condition
$
V_+(u)=V_-(u)
$
is equivalent to
$
\mathbb E[bu]
=
\tfrac{\gamma}{2}\bigl(z^{(+)}-z^{(-)}\bigr).
$
Moreover, since
\(h_1(a,\gamma)-\gamma\leq0\) and \(h_{k-1}(a,\gamma)-\gamma\leq0\),
both \(V_{\mathrm{avg}}^{(2)}(u)\) and
\(V_{\mathrm{avg}}^{(k)}(u)\) are minimized by maximizing
\(\mathbb E[|bu|]\). Under the almost sure constraint
$
|u|\leq\tfrac{a}{b}x,
$
we have $\mathbb E[|u|]\leq \tfrac{a}{b}x \Longleftrightarrow \mathbb E[|bu|]\leq ax$.
Hence, at the optimum
\[
\mathbb E[u]
=
\tfrac{\gamma}{2b}\bigl(z^{(+)}-z^{(-)}\bigr) \quad
\mathbb E[|u|]=\tfrac{a}{b}x,
\]
Substituting these identities gives
\[
V_+(u)=V_-(u)
=
\mathbb E\!\Big[
h_1(a,\gamma)x
-\tfrac{\gamma}{2}\bigl(z^{(+)}+z^{(-)}\bigr)
\Big],
\]
while
\[
V_{\mathrm{avg}}^{(j)}(u)
=
\mathbb E\!\Big[
h_j(a,\gamma)x
-\tfrac{\gamma}{2}\bigl(z^{(+)}+z^{(-)}\bigr)
\Big],
\qquad j\in\{2,k\}.
\]
Therefore,
\begin{multline*}
V_k(x,z^{(+)},z^{(-)})
=
\max\Bigg\{
\mathbb E\!\Big[
h_1(a,\gamma)x
-\tfrac{\gamma}{2}\bigl(z^{(+)}+z^{(-)}\bigr)
\Big],\\
\mathbb E\!\Big[
h_2(a,\gamma)x
-\tfrac{\gamma}{2}\bigl(z^{(+)}+z^{(-)}\bigr)
\Big],\;
\mathbb E\!\Big[
h_k(a,\gamma)x
-\tfrac{\gamma}{2}\bigl(z^{(+)}+z^{(-)}\bigr)
\Big]
\Bigg\}.
\end{multline*}
\end{itemize}
Combining the three regimes, we conclude that \(V_k\) is finite if and only if
$
\gamma \ge \max_{0\le j\le k-1} h_j(a,\gamma),
$
in which case
\begin{multline*}
V_k(x,z^{(+)},z^{(-)})
=
\max\Bigg\{
\mathbb E\!\left[x-\gamma z^{(+)}\right],\;
\mathbb E\!\left[x-\gamma z^{(-)}\right],\;
\mathbb E\!\Big[
h_1(a,\gamma)x-\tfrac{\gamma}{2}\bigl(z^{(+)}+z^{(-)}\bigr)
\Big],\\
\mathbb E\!\Big[
h_2(a,\gamma)x-\tfrac{\gamma}{2}\bigl(z^{(+)}+z^{(-)}\bigr)
\Big],\;
\mathbb E\!\Big[
h_k(a,\gamma)x-\tfrac{\gamma}{2}\bigl(z^{(+)}+z^{(-)}\bigr)
\Big]
\Bigg\}.
\end{multline*}
To eliminate the \(h_2\)-term, first observe that
\[
h_2(a,\gamma)-h_1(a,\gamma)
=
a\bigl(1+2a-\gamma\bigr).
\]
Moreover, the recursion for \(h_j\) gives
\[
\begin{aligned}
h_{j+1}(a,\gamma)-h_j(a,\gamma)
&=
\bigl(1+2a h_j(a,\gamma)-a\gamma\bigr)
-\bigl(1+2a h_{j-1}(a,\gamma)-a\gamma\bigr)\\
&=
2a\bigl(h_j(a,\gamma)-h_{j-1}(a,\gamma)\bigr),
\qquad j\geq2.
\end{aligned}
\]
Therefore,
\[
h_{j+1}(a,\gamma)-h_j(a,\gamma)
=
(2a)^{j-1}a\bigl(1+2a-\gamma\bigr),
\qquad j\geq1.
\]
Since \(a\geq0\), all successive differences have the same sign. More
precisely,
\[
\begin{cases}
h_1(a,\gamma)\leq h_2(a,\gamma)\leq\cdots\leq h_k(a,\gamma),
& \gamma\leq1+2a,\\[2pt]
h_1(a,\gamma)\geq h_2(a,\gamma)\geq\cdots\geq h_k(a,\gamma),
& \gamma\geq1+2a.
\end{cases}
\]
Thus, in either case,
$
h_2(a,\gamma)
\leq
\max\bigl\{h_1(a,\gamma),h_k(a,\gamma)\bigr\}.
$
Since all three corresponding terms have the same history penalty
\(-\tfrac{\gamma}{2}(z^{(+)}+z^{(-)})\), it follows that
$
\mathbb E\!\Big[
h_2(a,\gamma)x
-\tfrac{\gamma}{2}\bigl(z^{(+)}+z^{(-)}\bigr)
\Big]
$
can never exceed both the \(h_1\)-term and the \(h_k\)-term for all admissible values of $\gamma$ satisfying $
\gamma\geq \max\{h_0(a,\gamma),h_1(a,\gamma),\ldots,h_{k-1}(a,\gamma)\}
$. Hence, it is
redundant in the maximum. Consequently,
\begin{multline*}
V_k(x,z^{(+)},z^{(-)})
=
\max\Bigg\{
\mathbb E\!\left[x-\gamma z^{(+)}\right],\;
\mathbb E\!\left[x-\gamma z^{(-)}\right],\;
\mathbb E\!\Big[
h_1(a,\gamma)x-\tfrac{\gamma}{2}\bigl(z^{(+)}+z^{(-)}\bigr)
\Big],\\
\mathbb E\!\Big[
h_k(a,\gamma)x-\tfrac{\gamma}{2}\bigl(z^{(+)}+z^{(-)}\bigr)
\Big]
\Bigg\},
\end{multline*}
which matches the claimed form and completes the induction argument. This then concludes the Proof of~Theorem~\ref{thm:bellman-termes}\textit{(i)}.

\noindent\emph{Proof of Theorem~\ref{thm:bellman-termes}\textit{(ii)}.}
By Theorem~\ref{thm:bellman-termes}\textit{(i)}, the \(k\)-th value-iteration
iterate \(V_k\) is finite if and only if
$
\gamma \ge \max_{0\le j\le k-1} h_j(a,\gamma).
$
Therefore, to determine whether the infinite horizon value is finite and to
identify the limiting value function, we must characterize the behavior of
\(h_k(a,\gamma)\) as \(k\to\infty\). Accordingly, we analyze the linear
recursion in~\eqref{hk-append} and determine for which values of \(\gamma\) the
finiteness conditions hold for every \(k\). We begin by analyzing the strict case \(a>\tfrac{1}{2}\). The marginal case
\(a=\tfrac{1}{2}\) is treated separately afterward.
For \(a>\tfrac12\), this recursion
has the unique fixed point
\[
h_\star(a,\gamma)=1+2a\,h_\star(a,\gamma)-a\gamma
\quad\Longrightarrow\quad
h_\star(a,\gamma)=\tfrac{1-a\gamma}{1-2a}.
\]
Subtracting \(h_\star(a,\gamma)\) from the recursion gives
\[
h_{k+1}(a,\gamma)-h_\star(a,\gamma)
=
2a\bigl(h_k(a,\gamma)-h_\star(a,\gamma)\bigr),
\]
and hence, by applying the recursion repeatedly,
\begin{equation}\label{eq:h-closed-new}
h_k(a,\gamma)
=
h_\star(a,\gamma)
+(2a)^{k-1}\bigl(h_1(a,\gamma)-h_\star(a,\gamma)\bigr),
\qquad k\geq1.
\end{equation}
Using \(h_1(a,\gamma)=1+a\), we obtain
$
h_1(a,\gamma)-h_\star(a,\gamma)
=
\tfrac{a}{1-2a}\bigl(\gamma-1-2a\bigr).
 $
Define
$
\gamma_\star\coloneqq1+2a.
$
Since \(a>\tfrac12\), we have \(2a>1\) and \(1-2a<0\). Therefore, the sign of
\(h_1(a,\gamma)-h_\star(a,\gamma)\) is opposite to that of
\(\gamma-\gamma_\star\). We now distinguish three cases.

\begin{itemize}
\item[\textit{(i)}] \emph{Suppose that \(\gamma<\gamma_\star\).}
Then \(h_1(a,\gamma)-h_\star(a,\gamma)>0\). Since \(2a>1\), we have
\((2a)^{k-1}\to+\infty\). Hence, the second term on the right-hand side
of~\eqref{eq:h-closed-new} diverges to \(+\infty\), and therefore
\[
h_k(a,\gamma)\longrightarrow+\infty
\qquad\text{as }k\to\infty.
\]
Consequently, there exists \(K\) such that
$
\gamma<h_{K-1}(a,\gamma).
$
By Theorem~\ref{thm:bellman-termes}\textit{(i)}, \(V_K\) is then infinite.
Hence, the sequence of value functions diverges.

\item[\textit{(ii)}] \emph{Suppose that \(\gamma=\gamma_\star\).}
In this case,
$
h_1(a,\gamma_\star)-h_\star(a,\gamma_\star)=0,
$
and therefore~\eqref{eq:h-closed-new} gives
\[
h_k(a,\gamma_\star)=h_\star(a,\gamma_\star)=1+a,
\qquad k\geq1.
\]
Moreover,
\[
h_k(a,\gamma_\star)=1+a\leq1+2a=\gamma_\star,
\]
so the finiteness condition \(\gamma_\star =1+2a \geq \max\{h_0(a,\gamma_\star),h_1(a,\gamma_\star),\ldots,h_{k-1}(a,\gamma_\star)\}=1+a\) holds for every \(k\geq1\).
Substituting \(h_k(a,\gamma_\star)=h_1(a,\gamma_\star)\) into the expression for
\(V_k\) yields
\[
V_k(x,z^{(+)},z^{(-)})
=
\max\Bigg\{
\mathbb E[x-\gamma_\star z^{(+)}],\;
\mathbb E[x-\gamma_\star z^{(-)}],\;
\mathbb E\!\Big[
(1+a)x-\tfrac{\gamma_\star}{2}\bigl(z^{(+)}+z^{(-)}\bigr)
\Big]
\Bigg\},
\, k\geq1.
\]

\item[\textit{(iii)}] \emph{Suppose that \(\gamma>\gamma_\star\).}
Then \(h_1(a,\gamma)-h_\star(a,\gamma)<0\). Since \(2a>1\),~\eqref{eq:h-closed-new} gives
\[
h_k(a,\gamma)\longrightarrow-\infty
\qquad\text{as }k\to\infty.
\]
More precisely,
\[
h_2(a,\gamma)-h_1(a,\gamma)
=
a\bigl(1+2a-\gamma\bigr)<0,
\]
and the recursion also implies
\[
h_{k+1}(a,\gamma)-h_k(a,\gamma)
=
2a\bigl(h_k(a,\gamma)-h_{k-1}(a,\gamma)\bigr).
\]
Thus, \(\{h_k(a,\gamma)\}_{k\geq1}\) is strictly decreasing, and
\[
h_k(a,\gamma)\leq h_1(a,\gamma)=1+a<1+2a=\gamma_\star<\gamma,
\qquad k\geq1.
\]
Hence, \(\gamma\geq \max\{h_0(a,\gamma),h_1(a,\gamma),\ldots,h_{k-1}(a,\gamma)\}\) for every \(k\geq1\). Furthermore, since \(x\geq0\),
\[
h_k(a,\gamma)x-\tfrac{\gamma}{2}\bigl(z^{(+)}+z^{(-)}\bigr)
\leq
(1+a)x-\tfrac{\gamma}{2}\bigl(z^{(+)}+z^{(-)}\bigr).
\]
Therefore, the \(h_k\)-term is dominated by the \(h_1\)-term, and
\[
V_k(x,z^{(+)},z^{(-)})
=
\max\Bigg\{
\mathbb E[x-\gamma z^{(+)}],\;
\mathbb E[x-\gamma z^{(-)}],\;
\mathbb E\!\Big[
(1+a)x-\tfrac{\gamma}{2}\bigl(z^{(+)}+z^{(-)}\bigr)
\Big]
\Bigg\},
\qquad k\geq1.
\]
\end{itemize}
Combining the three cases, we conclude that the value-iteration sequence
converges to a finite limit if and only if
$
\gamma\geq\gamma_\star=1+2a.
$
For every \(\gamma\geq\gamma_\star\), the \(h_k\)-term is dominated by the
\(h_1\)-term for all \(k\geq1\). Hence,
\[
V_k(x,z^{(+)},z^{(-)})=V_1(x,z^{(+)},z^{(-)}),
\qquad k\geq1,
\]
and the sequence reaches its limit after one iteration. Therefore,
\[
V_\star(x,z^{(+)},z^{(-)})
=
\max\Bigg\{
\mathbb E[x-\gamma z^{(+)}],\;
\mathbb E[x-\gamma z^{(-)}],\;
\mathbb E\!\Big[
(1+a)x-\tfrac{\gamma}{2}\bigl(z^{(+)}+z^{(-)}\bigr)
\Big]
\Bigg\}.
\]
Moreover, \(V_2=\mathcal F V_1=V_1=V_\star\), and thus
$
\mathcal F V_\star=V_\star.
$
Hence, \(V_\star\) is a fixed point of the Bellman operator.

We now turn our attention to the marginal case \(a=\tfrac12\). Then, the recursion~\eqref{hk-append} becomes
\[
h_{k+1}\!\left(\tfrac12,\gamma\right)
=
1+h_k\!\left(\tfrac12,\gamma\right)-\tfrac{\gamma}{2}.
\]
Since
$
h_1\!\left(\tfrac12,\gamma\right)=1+\tfrac12=\tfrac32,
$
this recursion solves explicitly to
\[
h_k\!\left(\tfrac12,\gamma\right)
=
\tfrac32+(k-1)\Bigl(1-\tfrac{\gamma}{2}\Bigr),
\qquad k\geq 1.
\]
Hence, the behavior is completely determined by the sign of $1-\tfrac{\gamma}{2}$.
\begin{itemize}
\item[\textit{(1)}] If $\gamma<2$, then $1-\gamma/2>0$, so
$
h_k\!\left(\tfrac12,\gamma\right)\to +\infty.
$
In particular, there exists $K$ such that $h_K\!\left(\tfrac12,\gamma\right)>\gamma$, and Theorem~\ref{thm:bellman-termes}\textit{(i)} implies that $V_{K+1}$ is infinite. Therefore the value iteration sequence cannot converge to a finite limit.

\item[\textit{(2)}]  If $\gamma=2$, then
$
h_k\!\left(\tfrac12,2\right)=\tfrac32=h_1\!\left(\tfrac12,2\right)<\gamma
$ for $k\ge 1$.
Thus, the moving term and the $h_1$ term coincide, and for every $k\geq 1$ we have
\begin{multline*}
V_k(x,z^{(+)},z^{(-)})
=V_\star(x,z^{(+)},z^{(-)})=
\max\Bigg\{
\mathbb{E}\bigl[x-2z^{(+)}\bigr],\;
\mathbb{E}\bigl[x-2z^{(-)}\bigr],
\mathbb{E}\!\Big[
\tfrac32\,x-\bigl(z^{(+)}+z^{(-)}\bigr)
\Big]
\Bigg\}.
\end{multline*}
\item[\textit{(3)}] If $\gamma>2$, then $1-\gamma/2<0$, so
$
h_k\!\left(\tfrac12,\gamma\right)\to -\infty.
$
Hence the $h_k$ term disappears in the limit, while the $h_1$ term remains. Therefore,
\[
V_\star(x,z^{(+)},z^{(-)})
=
\max\Bigg\{
\mathbb E\!\left[x-\gamma z^{(+)}\right],\;
\mathbb E\!\left[x-\gamma z^{(-)}\right],\;
\mathbb E\!\Big[
\tfrac32\,x-\tfrac{\gamma}{2}\bigl(z^{(+)}+z^{(-)}\bigr)
\Big]
\Bigg\}.
\]
\end{itemize}
Consequently, for $a=\tfrac12$, the value iteration sequence also converges if and only if
$
\gamma\ge \left.(1+2a)\right|_{a=\frac12}=2.
$ In that case, 
\begin{align}\label{Vstar11}
    V_\star(x,z^{(+)},z^{(-)})
=
\max\Bigg\{
\mathbb E\!\left[x-\gamma z^{(+)}\right],\;
\mathbb E\!\left[x-\gamma z^{(-)}\right],\;
\mathbb E\!\Big[
\left.(1+a)\right|_{a=\frac12}
\,x-\tfrac{\gamma}{2}\bigl(z^{(+)}+z^{(-)}\bigr)
\Big]
\Bigg\}.
\end{align}
Combining the two regimes $a>\tfrac12$ and $a=\tfrac12$, we conclude that the value iteration sequence converges if and only if $\gamma\ge 1+2a$. In that case,
\[
V_\star(x,z^{(+)},z^{(-)})
=
\max\Bigg\{
\mathbb E[x-\gamma z^{(+)}],\;
\mathbb E[x-\gamma z^{(-)}],\;
\mathbb E\!\Big[
(1+a)x-\tfrac{\gamma}{2}\bigl(z^{(+)}+z^{(-)}\bigr)
\Big]
\Bigg\}.
\]

\noindent\emph{Proof of Theorem~\ref{thm:bellman-termes}\textit{(iii)}.}
By Theorem~\ref{thm:bellman-termes}\textit{(ii)}, for every
\(\gamma\geq\gamma_\star\), the value iteration satisfies \(V_k=V_\star\) for all
\(k\geq1\), and
$
\mathcal F V_\star=V_\star.
$
Moreover, the minimization in \(\mathcal F V_\star\) has exactly the same structure as the minimization appearing in the induction step from \(V_{k-1}\) to \(V_k\). In particular, after applying the Bellman operator and maximizing over \(v\ge 0\), we obtain the same minimization problem as in the induction argument.
\begin{multline*}
\mathcal F V_\star(x,z^{(+)},z^{(-)})
=
\min_{|u|\leq\tfrac{a}{b}x}
\max\Bigg\{
\underbrace{
\mathbb E\!\left[(1+a)x+bu-\gamma z^{(+)}\right]
}_{V_+(u)},
\;
\underbrace{
\mathbb E\!\left[(1+a)x-bu-\gamma z^{(-)}\right]
}_{V_-(u)},
\\
\underbrace{
\mathbb E\!\Big[
x+a(1+a)x+(1+a-\gamma)|bu|
-\tfrac{\gamma}{2}\bigl(z^{(+)}+z^{(-)}\bigr)
\Big]
}_{V_{\mathrm{avg}}(u)}
\Bigg\}.
\end{multline*}
Since \(\gamma\ge \gamma_\star=1+2a\), we have
$
1+a-\gamma \le 1+a-\gamma_\star \le 0,
$
which is strictly negative when \(a\neq 0\), so
\(V_{\mathrm{avg}}(u)\) is minimized by maximizing \(\mathbb E[|bu|]\) as Lemma~\ref{lemm1} shows, which gives
\(\mathbb E[|bu|]=ax\).
We distinguish three regimes as before.
\begin{itemize}
\item[\textit{(i)}] If \(z^{(+)}-z^{(-)}<-\tfrac{2a}{\gamma}x\), then
$
V_+(u)-V_-(u)
=
\mathbb E\!\left[2bu+\gamma\bigl(z^{(-)}-z^{(+)}\bigr)\right]>0
$
for every admissible \(u\). Hence \(V_+(u)\) dominates \(V_-(u)\), and both
\(V_+(u)\) and \(V_{\mathrm{avg}}(u)\) are minimized by
$
u^\star=-\tfrac{a}{b}x.
$
\item[\textit{(ii)}] If \(z^{(+)}-z^{(-)}>\tfrac{2a}{\gamma}x\), then
$
V_-(u)>V_+(u)
$
for every admissible \(u\). Hence \(V_-(u)\) dominates \(V_+(u)\), and both
\(V_-(u)\) and \(V_{\mathrm{avg}}(u)\) are minimized by
$
u^\star=\tfrac{a}{b}x.
$
\item[\textit{(iii)}] If
$
\bigl|z^{(+)}-z^{(-)}\bigr|\leq \tfrac{2a}{\gamma}x,
$
then neither boundary term dominates. They are equalized by
\[
V_+(u)=V_-(u)
\quad\Longleftrightarrow\quad
\mathbb E[u]
=
\tfrac{\gamma\bigl(z^{(+)}-z^{(-)}\bigr)}{2b},
\]
while \(V_{\mathrm{avg}}(u)\) is minimized by
$
\mathbb E[|u|]=\tfrac{a}{b}x.
$
The regime condition guarantees that these moment constraints are compatible with
\(|u|\leq \tfrac{a}{b}x\) almost surely.
\end{itemize}
Therefore, the minimizing policy \(u_t=\eta^\star(x_t,z_t^{(+)},z_t^{(-)})\) of
\(\mathcal F V_\star\) is given by
\begin{align}\label{optimalagehalf}
    \left\{
\begin{aligned}
u^\star
&=
-\tfrac{a}{b}x,
&&
\text{if }
z^{(+)}-z^{(-)}
<
-\tfrac{2a}{\gamma}x,
\\[2pt]
u^\star
&=
\tfrac{a}{b}x,
&&
\text{if }
z^{(+)}-z^{(-)}
>
\tfrac{2a}{\gamma}x,
\\[2pt]
\mathbb E[u^\star]
&=
\tfrac{\gamma\bigl(z^{(+)}-z^{(-)}\bigr)}{2b},
\qquad
\mathbb E[|u^\star|]
=
\tfrac{a}{b}x,
&&
\text{if }
\bigl|z^{(+)}-z^{(-)}\bigr|
\leq
\tfrac{2a}{\gamma}x,
\end{aligned}
\right.
\end{align}
and the resulting value is 
\begin{multline*}
\mathcal F V_\star(x,z^{(+)},z^{(-)})
=
\max\Bigg\{
\mathbb E\!\left[x-\gamma z^{(+)}\right],\;
\mathbb E\!\left[x-\gamma z^{(-)}\right],\\
\mathbb E\!\left[
h_1(a,\gamma)x-\tfrac{\gamma}{2}\bigl(z^{(+)}+z^{(-)}\bigr)
\right],\;
\mathbb E\!\left[
h_2(a,\gamma)x-\tfrac{\gamma}{2}\bigl(z^{(+)}+z^{(-)}\bigr)
\right]
\Bigg\}.
\end{multline*}
where
$
h_1(a,\gamma)=1+a$ and 
$h_2(a,\gamma)=1+2a\,h_1(a,\gamma)-a\gamma.$
Since \(\gamma\ge\gamma_\star=1+2a\), we have
\[
h_2(a,\gamma)-h_1(a,\gamma)=a(1+2a-\gamma)\le 0,
\]
so \(h_2(a,\gamma)\le h_1(a,\gamma)\), and the \(h_2\)-term is redundant in the maximum and therefore can be dropped. Hence, the Bellman update is closed
\[
\mathcal F V_\star(x,z^{(+)},z^{(-)})=V_\star(x,z^{(+)},z^{(-)}).
\]
Since \(\mathcal F V_\star=V_\star\), Theorem~\ref{thm:l1-dp}\textit{(ii)} implies that the minimizing policy in~\eqref{optimalagehalf} coincides with~\eqref{eq:ustar-cases1-thm} and is optimal for Problem~\ref{pb2}. Moreover, Theorem~\ref{thm:l1-dp}\textit{(ii)} implies that substituting
\(z_t^{(+)}=\sum_{\tau=0}^{t-1}\lvert x_{\tau+1}-ax_\tau-bu_\tau\rvert\) and
\(z_t^{(-)}=\sum_{\tau=0}^{t-1}\lvert x_{\tau+1}-ax_\tau+bu_\tau\rvert\)
into~\eqref{optimalagehalf} yields the causal randomized policy stated in~\eqref{dual-cont}, which is optimal for Problem~\ref{prob:l1-robust-control}.

\noindent\emph{Proof of Theorem~\ref{thm:bellman-termes}\textit{(iv)}.}
By Theorem~\ref{thm:l1-dp}\textit{(i)}, Problems~\ref{pb2} and
\ref{prob:l1-robust-control} have the same value, which is finite if and only if
the value iteration sequence is bounded above. By
Theorem~\ref{thm:bellman-termes}\textit{(ii)}, this occurs if and only if
\(\gamma\geq\gamma_\star\).

For \(\gamma\geq\gamma_\star\), we have \(V_k=V_\star\) for every \(k\geq1\).
Evaluating the expression for \(V_\star\) at \(z^{(+)}=z^{(-)}=0\) gives
\[
J_\star(x_0)=V_\star(x_0,0,0)
=\max\left\{x_0,x_0,(1+a)x_0\right\}
=(1+a)x_0.
\]
where the last equality follows from \(a\ge0\) and \(x_0\geq0\). Hence,
\begin{align}\label{opt_cost}
    J_\star(x_0)=V_k(x_0,0,0)=(1+a)x_0,
\qquad k\geq1.
\end{align}
By Theorem~\ref{thm:bellman-termes}\textit{(iii)}, \(u_t=\eta^\star(x_t,z_t^{(+)},z_t^{(-)})\) achieves the optimal cost in~\eqref{opt_cost} for Problem~\ref{pb2}. Moreover, substituting \(z_t^{(+)}=\sum_{\tau=0}^{t-1}\lvert x_{\tau+1}-ax_\tau-bu_\tau\rvert\) and \(z_t^{(-)}=\sum_{\tau=0}^{t-1}\lvert x_{\tau+1}-ax_\tau+bu_\tau\rvert\) into~\eqref{eq:ustar-cases1-thm} yields the optimal policy stated in~\eqref{dual-cont}, which attains the same optimal cost for Problem~\ref{prob:l1-robust-control}.

This completes the proof of all statements in Theorem~\ref{thm:bellman-termes}.
\qedblack

\subsection{Proof of Theorem~\ref{thm:small-a-zero-policy}}\label{thm:small-a-zero-policy-append}
We proceed in four steps, corresponding to parts \textit{(i)}--\textit{(iv)} of
Theorem~\ref{thm:small-a-zero-policy}. Let \(0\leq a<\tfrac{1}{2}\) and, without loss
of generality, \(b>0\). Recall that the Bellman operator is given by
\[
\begin{aligned}
\mathcal{F}V(x,z^{(+)},z^{(-)})
=
\min_{|u|\leq \tfrac{a}{b}x}
\ \max_{v\geq 0}\,
\Bigg(
\mathbb E[x]
+
V\!\left(
v,\,
z^{(+)}+\left|v-ax-bu\right|,
z^{(-)}+\left|v-ax+bu\right|
\right)
\Bigg)
\end{aligned}
\]
and that the value iteration sequence is initialized and updated according to~\eqref{vk}.

\medskip
\noindent\emph{Proof of Theorem~\ref{thm:small-a-zero-policy}\textit{(i)}.}
The proof is identical to that of Theorem~\ref{thm:bellman-termes}\textit{(i)}. Indeed, the argument there does not use any restriction on the parameter \(a\). Therefore, the proof carries over verbatim for all \(a\ge 0\), and in particular for \(0\le a< \tfrac12\).

\noindent\emph{Proof of Theorem~\ref{thm:small-a-zero-policy}\textit{(ii)}.}
By Theorem~\ref{thm:small-a-zero-policy}\textit{(i)}, the \(k\)-th value iteration
iterate \(V_k\) is finite if and only if
$
\gamma \ge \max_{0\le j\le k-1} h_j(a,\gamma).
$
Therefore, to determine whether the infinite horizon value is finite and to
identify the limiting value function, we must characterize the behavior of
\(h_k(a,\gamma)\) as \(k\to\infty\). Accordingly, we analyze the linear
recursion in~\eqref{hk1} and determine for which values of \(\gamma\) the
finiteness conditions hold for every \(k\).

Recall from the proof of Theorem~\ref{thm:bellman-termes}\textit{(ii)} that the recursion in~\eqref{hk1} has the fixed point \(h_\star(a,\gamma)=1+2a\,h_\star(a,\gamma)-a\gamma\Leftrightarrow h_\star(a,\gamma)=\tfrac{1-a\gamma}{1-2a}\), and that \(h_{k+1}(a,\gamma)-h_\star(a,\gamma)=2a\bigl(h_k(a,\gamma)-h_\star(a,\gamma)\bigr)\). Hence, for every \(k\ge1\),
\begin{equation}\label{eq:hclosed-a-less-half}
h_k(a,\gamma)=h_\star(a,\gamma)+(2a)^{k-1}\bigl(h_1(a,\gamma)-h_\star(a,\gamma)\bigr).
\end{equation}
Since \(h_1(a,\gamma)=1+a\), we get \(h_1(a,\gamma)-h_\star(a,\gamma)=\tfrac{a}{1-2a}\bigl(\gamma-(1+2a)\bigr)\), and therefore \(h_{k+1}(a,\gamma)-h_k(a,\gamma)=a(2a)^{k-1}\bigl(1+2a-\gamma\bigr)\), \(k\ge1\).
Since $a\ge0$ and $0\le 2a<1$, the sign of this difference is independent of $k$. Hence, the sequence $\{h_k(a,\gamma)\}_{k\ge1}$ is monotone: it is increasing if $\gamma<1+2a$, decreasing if $\gamma>1+2a$, and constant if $\gamma=1+2a$. In particular, every term lies between $h_1(a,\gamma)$ and $h_\star(a,\gamma)$, and
\[
h_k(a,\gamma)\longrightarrow h_\star(a,\gamma)
\quad\text{as }k\to\infty.
\]
We now determine exactly when the finiteness condition holds for every $k$. By Theorem~\ref{thm:small-a-zero-policy}\textit{(i)}, this is equivalent to
$
\gamma \ge \max_{0\le j\le k-1} h_j(a,\gamma)$ for $k\ge 0$.
If $\gamma<h_\star(a,\gamma)$, then since $h_k(a,\gamma)\to h_\star(a,\gamma)$, for some large enough $K$ we have
$
h_K(a,\gamma)>\gamma.
$
Then, the finiteness condition fails at step $K+1$, so $V_{K+1}$ is infinite by Theorem~\ref{thm:small-a-zero-policy}\textit{(i)}.

Conversely, suppose $\gamma\ge h_\star(a,\gamma)$. Then
\[
\gamma\ge \tfrac{1-a\gamma}{1-2a}
\quad\Longleftrightarrow\quad
\gamma(1-a)\ge 1
\quad\Longleftrightarrow\quad
\gamma\ge \tfrac{1}{1-a}\eqqcolon \gamma_\star.
\]
Therefore also
$
\gamma\ge \gamma_\star> h_1(a,\gamma)=1+a
$ for $0\leq a<\tfrac{1}{2}$.
Since $\{h_k(a,\gamma)\}_{k\ge1}$ is monotone and converges to $h_\star(a,\gamma)\le\gamma$, we obtain
\[
h_k(a,\gamma)\le \gamma
\qquad\text{for all }k\ge1.
\]
Consequently, the finiteness conditions
$
\gamma \ge \max_{0\le j\le k-1} h_j(a,\gamma)
$
holds for every $k$ indicating that the iterate $V_k$ is finite for all $k$.
Hence, for $0\le a<\tfrac12$, the value iteration sequence converges if and only if
$\gamma\ge \tfrac{1}{1-a}$.
In that case, the limiting value function is 
\begin{multline}\label{Vstar}
V_\star(x,z^{(+)},z^{(-)})
=
\max\Bigg\{
\mathbb{E}\bigl[x-\gamma z^{(+)}\bigr],\;
\mathbb{E}\bigl[x-\gamma z^{(-)}\bigr],\\
\mathbb{E}\!\Big[
\tfrac{1-a\gamma}{1-2a}\,x
-\tfrac{\gamma}{2}\bigl(z^{(+)}+z^{(-)}\bigr)
\Big],\;
\mathbb{E}\!\Big[
(1+a)x
-\tfrac{\gamma}{2}\bigl(z^{(+)}+z^{(-)}\bigr)
\Big]
\Bigg\}.
\end{multline}
\noindent\emph{Proof of Theorem~\ref{thm:small-a-zero-policy}\textit{(iii)}.}
Let \(\gamma=\gamma_\star=\tfrac{1}{1-a}\). Then, it holds that
$
h_\star(a,\gamma_\star)=\tfrac{1-a\gamma_\star}{1-2a}=\gamma_\star.
$
Moreover,
$
\gamma_\star-(1+a)=\tfrac{a^2}{1-a}\geq0,
$
so the \((1+a)\)-term in~\eqref{Vstar} is dominated by the \(h_\star(a,\gamma_\star)\)-term. Hence, \eqref{Vstar} becomes
\begin{multline}\label{eq:Vk-small-a}
V_\star(x,z^{(+)},z^{(-)})
=
\max\Bigg\{
\mathbb E\!\left[x-\gamma_\star z^{(+)}\right],\;
\mathbb E\!\left[x-\gamma_\star z^{(-)}\right],
\mathbb E\!\Big[
\gamma_\star x
-\tfrac{\gamma_\star}{2}\bigl(z^{(+)}+z^{(-)}\bigr)
\Big]
\Bigg\}.
\end{multline}
Applying the Bellman operator to~\eqref{eq:Vk-small-a} gives
\begin{multline*}
\mathcal F V_\star(x,z^{(+)},z^{(-)})
=
\min_{\lvert u\rvert\leq\tfrac{a}{b}x}
\max\Bigg\{
\max_{v\geq0}\mathbb E\!\Big[
x+v-\gamma_\star\bigl(z^{(+)}+\lvert v-ax-bu\rvert\bigr)
\Big],\\
\max_{v\geq0}\mathbb E\!\Big[
x+v-\gamma_\star\bigl(z^{(-)}+\lvert v-ax+bu\rvert\bigr)
\Big],\\
\max_{v\geq0}\mathbb E\!\Bigg[
x+\gamma_\star v
-\tfrac{\gamma_\star}{2}\Big(
z^{(+)}+\lvert v-ax-bu\rvert
+z^{(-)}+\lvert v-ax+bu\rvert
\Big)
\Bigg]
\Bigg\}.
\end{multline*}
Applying Lemma~\ref{lem:sup-exp}\textit{(L1)} to the first two maximizations, and since \(\gamma_\star \geq 1\), we obtain
\begin{multline*}
\mathcal F V_\star(x,z^{(+)},z^{(-)})
=
\min_{\lvert u\rvert\leq\tfrac{a}{b}x}
\max\Bigg\{
\mathbb E\!\Big[(1+a)x+bu-\gamma_\star z^{(+)}\Big],\;
\mathbb E\!\Big[(1+a)x-bu-\gamma_\star z^{(-)}\Big],\\
V_{\mathrm{avg}}(u)
\Bigg\},
\end{multline*}
where
\[
V_{\mathrm{avg}}(u)
\coloneqq
\max_{v\geq0}
\mathbb E\!\Bigg[
x+\gamma_\star v
-\tfrac{\gamma_\star}{2}\Big(
z^{(+)}+\lvert v-ax-bu\rvert
+z^{(-)}+\lvert v-ax+bu\rvert
\Big)
\Bigg].
\]
Applying Lemma~\ref{lem:sup-exp}\textit{(L2)} to \(V_{\mathrm{avg}}(u)\), we get
\begin{align*}
V_{\mathrm{avg}}(u)
=
\mathbb E\!\Big[
x+a\gamma_\star x
+\bigl(\gamma_\star-\gamma_\star\bigr)\lvert bu\rvert
-\tfrac{\gamma_\star}{2}\bigl(z^{(+)}+z^{(-)}\bigr)
\Big]
=
\mathbb E\!\Big[
\gamma_\star x
-\tfrac{\gamma_\star}{2}\bigl(z^{(+)}+z^{(-)}\bigr)
\Big],
\end{align*}
where the last equality follows from \(1+a\gamma_\star=\gamma_\star\). Thus, the coefficient in front of the exploration incentivizing term \(\mathbb E[\lvert bu\rvert]\) vanishes, and \(V_{\mathrm{avg}}(u)\) is independent of \(u\). Consequently,
\begin{multline*}
\mathcal F V_\star(x,z^{(+)},z^{(-)})
=
\min_{\lvert u\rvert\leq\tfrac{a}{b}x}
\max\Bigg\{
\mathbb E\!\Big[(1+a)x+bu-\gamma_\star z^{(+)}\Big],\;
\mathbb E\!\Big[(1+a)x-bu-\gamma_\star z^{(-)}\Big],\\
\mathbb E\!\Big[
\gamma_\star x
-\tfrac{\gamma_\star}{2}\bigl(z^{(+)}+z^{(-)}\bigr)
\Big]
\Bigg\}.
\end{multline*}
The optimal control input is therefore obtained by minimizing
\(\max\{V_+(u),V_-(u)\}\), where
\[
V_+(u)-V_-(u)
=
\mathbb E\!\Big[
2bu+\gamma_\star\bigl(z^{(-)}-z^{(+)}\bigr)
\Big].
\]
We distinguish the following three regimes.
\begin{itemize}
\item[\textit{(i)}] If \(z^{(-)}-z^{(+)} > 2a(1-a)x\), then \(\mathbb E\!\Big[2bu+\gamma_\star\bigl(z^{(-)}-z^{(+)}\bigr)\Big]\ge \mathbb E\!\Big[-2ax+\gamma_\star\bigl(z^{(-)}-z^{(+)}\bigr)\Big]>0\), which implies that \(V_+(u)>V_-(u)\) for every admissible \(u\). The minimizing input therefore minimizes \(V_+(u)\), yielding \(u^\star=-\tfrac{a}{b}x\).
\item[\textit{(ii)}] If
\(z^{(+)}-z^{(-)} > 2a(1-a)x\), then \(V_-(u)>V_+(u)\) for every admissible \(u\). The minimizing input is therefore the optimal minimizer of \(V_-(u)\) yielding
$
u^\star=\tfrac{a}{b}x.
$
\item[\textit{(iii)}] If \(\lvert z^{(+)}-z^{(-)}\rvert\leq 2a(1-a)x\), then the equalizing input is admissible, and the two boundary terms can be matched by choosing \(u^\star=\tfrac{z^{(+)}-z^{(-)}}{2b(1-a)}\), which yields \(V_+(u^\star)=V_-(u^\star)\).
\end{itemize}
 Hence, the optimal control is the deterministic policy
\begin{align}\label{ustarnew}
\begin{cases}
 u^\star=-\tfrac{a}{b}x,
&
z^{(-)}-z^{(+)}>2a(1-a)x,
\\[4pt]
 u^\star=\tfrac{z^{(+)}-z^{(-)}}{2b(1-a)},
&
\lvert z^{(+)}-z^{(-)}\rvert\leq2a(1-a)x,
\\[6pt]
 u^\star=\tfrac{a}{b}x,
&
z^{(+)}-z^{(-)}>2a(1-a)x,
\end{cases}
\end{align}
which yields
\begin{align*}
    V_\star(x,z^{(+)},z^{(-)})
=
\max\Bigg\{
\mathbb E\!\left[x-\gamma_\star z^{(+)}\right],\;
\mathbb E\!\left[x-\gamma_\star z^{(-)}\right],\;
\mathbb E\!\Big[
\gamma_\star x-\tfrac{\gamma_\star}{2}\bigl(z^{(+)}+z^{(-)}\bigr)
\Big]
\Bigg\},
\end{align*}
indicating that the Bellman operation is closed, that is,
$
\mathcal F V_\star(x,z^{(+)},z^{(-)})=V_\star(x,z^{(+)},z^{(-)}).
$
Starting from \(z_0^{(+)}=z_0^{(-)}=0\), the optimal policy in~\eqref{ustarnew} gives \(u_0^\star=0\).
Moreover, if, at time $t$, \(z_t^{(+)}=z_t^{(-)}\), then optimal control input in~\eqref{ustarnew} becomes \(u_t^\star=0\), and
\[
z_{t+1}^{(+)}
=
z_t^{(+)}+\lvert v_t-ax_t\rvert,
\qquad
z_{t+1}^{(-)}
=
z_t^{(-)}+\lvert v_t-ax_t\rvert,
\]
so \(z_{t+1}^{(+)}=z_{t+1}^{(-)}\). Thus, by induction,
\(z_t^{(+)}=z_t^{(-)}\) and \(u_t^\star=0\) for every \(t\geq0\). Therefore,
by Theorem~\ref{thm:l1-dp}\textit{(ii)}, the deterministic zero input policy
is optimal for both Problem~\ref{pb2} and
Problem~\ref{prob:l1-robust-control}. This concludes the proof for
\textit{(iii)}.

\noindent\emph{Proof of Theorem~\ref{thm:small-a-zero-policy}\textit{(iv)}.}
Let \(\gamma>\gamma_\star=\tfrac{1}{1-a}\) and \(0\le a<\tfrac12\). Recall that in the limit the value function converges to
\begin{multline}\label{eq:Vstar-small-a}
V_\star(x,z^{(+)},z^{(-)})
=
\max\Bigg\{
\mathbb E\!\left[x-\gamma z^{(+)}\right],\;
\mathbb E\!\left[x-\gamma z^{(-)}\right],\;
\mathbb E\!\Big[
(1+a)x-\tfrac{\gamma}{2}\bigl(z^{(+)}+z^{(-)}\bigr)
\Big],\\
\mathbb E\!\Big[
\tfrac{1-a\gamma}{1-2a}x
-\tfrac{\gamma}{2}\bigl(z^{(+)}+z^{(-)}\bigr)
\Big]
\Bigg\}.
\end{multline}
Applying the Bellman operator we get 
\[
\begin{aligned}
\mathcal F V^\star(x,z^{(+)},z^{(-)})
&=
\min_{|u|\le \tfrac{a}{b}x}
\max\Big\{
V_+(u),V_-(u),V_{\mathrm{avg}}^{(h_1)}(u),V_{\mathrm{avg}}^{(h_\star)}(u)
\Big\},
\\
V_\pm(u)
&\coloneqq
\max_{v\ge0}\mathbb E\!\left[
x+v-\gamma\bigl(z^{(\pm)}+|v-ax\mp bu|\bigr)
\right],
\\
V_{\mathrm{avg}}^{(r)}(u)
&\coloneqq
\max_{v\ge0}\mathbb E\!\left[
x+r v-\tfrac{\gamma}{2}\Bigl(
z^{(+)}+|v-ax-bu|+z^{(-)}+|v-ax+bu|
\Bigr)
\right],
\\
&\hspace{7.3cm} r\in\{h_1(a,\gamma),h_\star(a,\gamma)\}.
\end{aligned}
\]
where
$
h_1(a,\gamma)=1+a$ and $
h_\star(a,\gamma)=\tfrac{1-a\gamma}{1-2a}.
$
By Lemma~\ref{lem:sup-exp}\textit{(L1)}, since $\gamma > \tfrac{1}{1-a}\ge 1$
\[
V_+(u)=\mathbb E\!\left[(1+a)x+bu-\gamma z^{(+)}\right],
\qquad
V_-(u)=\mathbb E\!\left[(1+a)x-bu-\gamma z^{(-)}\right].
\]
For the average terms, 
where \(h_\star(a,\gamma)=\tfrac{1-a\gamma}{1-2a}\).
Since \(\gamma>\gamma_\star\) and \(1-2a>0\), so
$
\gamma>\gamma_\star=\,h_\star(a,\gamma_\star)\ge h_\star(a,\gamma).
$
Also, \(\gamma>\gamma_\star\ge h_1(a,\gamma)=1+a\) since $0\leq a<\tfrac{1}{2}$. Therefore, Lemma~\ref{lem:sup-exp}\textit{(L2)} gives
the maximizer in \(v\) is
$
v^\star=ax+|bu|,
$
and therefore
\[
V_{\mathrm{avg}}^{(r)}(u)
=
\mathbb E\!\Big[
x+a r x+(r-\gamma)|bu|
-\tfrac{\gamma}{2}\bigl(z^{(+)}+z^{(-)}\bigr)
\Big], \quad r\in\{1+a,h_\star(a,\gamma)\},
\]
with the coefficients in front of
\(\mathbb E[\lvert bu\rvert]\) being strictly negative. Hence, the minimizer satisfies
$
\mathbb E[\lvert bu\rvert]=ax.
$
The two boundary terms are
\[
V_+(u)
=
\mathbb E\!\Big[(1+a)x+bu-\gamma z^{(+)}\Big],
\qquad
V_-(u)
=
\mathbb E\!\Big[(1+a)x-bu-\gamma z^{(-)}\Big],
\]
and satisfy
\[
V_+(u)-V_-(u)
=
\mathbb E\!\Big[
2bu+\gamma\bigl(z^{(-)}-z^{(+)}\bigr)
\Big].
\]
We distinguish the following three regimes.
\begin{itemize}
\item[\textit{(i)}] If
\(z^{(-)}-z^{(+)}>\tfrac{2a}{\gamma}x\), then \(V_+(u)>V_-(u)\) for every
admissible \(u\). The minimizing input is therefore
$
u^\star=-\tfrac{a}{b}x.
$
This choice also minimizes the average terms, since it satisfies
\(\lvert bu^\star\rvert=ax\).

\item[\textit{(ii)}] If
\(z^{(+)}-z^{(-)}>\tfrac{2a}{\gamma}x\), then \(V_-(u)>V_+(u)\) for every
admissible \(u\). The minimizing input is therefore
$
u^\star=\tfrac{a}{b}x.
$
This choice also minimizes the average terms, since it satisfies
\(\lvert bu^\star\rvert=ax\).

\item[\textit{(iii)}] If
\(\lvert z^{(+)}-z^{(-)}\rvert\leq\tfrac{2a}{\gamma}x\), the boundary terms
can be equalized by imposing
$
\mathbb E[u^\star]
=
\tfrac{\gamma\bigl(z^{(+)}-z^{(-)}\bigr)}{2b}.
$
This mean is admissible under the regime condition. By
Lemma~\ref{lemm1}, it can be attained simultaneously with
$
\mathbb E[\lvert u^\star\rvert]
=
\tfrac{a}{b}x,
$
which minimizes all average terms.
\end{itemize}
Consequently, the optimal minimizing control is $u^\star=\eta^\star(x,z^{(+)},z^{(-)})$ where 
\begin{align}\label{opt-polic}
    \left\{
\begin{aligned}
u^\star
&=-\tfrac{a}{b}x,
&&\text{if }z^{(-)}-z^{(+)}>\tfrac{2a}{\gamma}x,\\[2pt]
u^\star
&=\tfrac{a}{b}x,
&&\text{if }z^{(+)}-z^{(-)}>\tfrac{2a}{\gamma}x,\\[2pt]
\mathbb E[u^\star]
&=\tfrac{\gamma\bigl(z^{(+)}-z^{(-)}\bigr)}{2b},
\qquad
\mathbb E[|u^\star|]=\tfrac{a}{b}x,
&&\text{if }
|z^{(+)}-z^{(-)}|\leq\tfrac{2a}{\gamma}x.
\end{aligned}
\right.
\end{align}
Substituting this control into the Bellman update yields
\begin{multline*}
\mathcal F V_\star(x,z^{(+)},z^{(-)})
=
\max\Bigg\{
\mathbb E\!\left[x-\gamma z^{(+)}\right],\;
\mathbb E\!\left[x-\gamma z^{(-)}\right],\;
\mathbb E\!\Big[
h_1(a,\gamma)\,x-\tfrac{\gamma}{2}\bigl(z^{(+)}+z^{(-)}\bigr)
\Big],\\
\mathbb E\!\Big[
h_2(a,\gamma)\,x-\tfrac{\gamma}{2}\bigl(z^{(+)}+z^{(-)}\bigr)
\Big],\;
\mathbb E\!\Big[
h_\star(a,\gamma)\,x-\tfrac{\gamma}{2}\bigl(z^{(+)}+z^{(-)}\bigr)
\Big]
\Bigg\},
\end{multline*}
where
\[
h_1(a,\gamma)=1+a,
\qquad
h_2(a,\gamma)=1+2a\,h_1(a,\gamma)-a\gamma,
\qquad
h_\star(a,\gamma)=\tfrac{1-a\gamma}{1-2a}.
\]
Now
\begin{align*}
    h_2(a,\gamma)-h_1(a,\gamma)=a\bigl(1+2a-\gamma\bigr),\quad 
    h_2(a,\gamma)-h_\star(a,\gamma)=
\tfrac{2a^2}{1-2a}\bigl(\gamma-(1+2a)\bigr).
\end{align*}
Therefore:
\begin{itemize}
\item[\textit{(1)}] if \(\gamma<1+2a\), then \(h_2(a,\gamma)>h_1(a,\gamma)\) and \(h_2(a,\gamma)<h_\star(a,\gamma)\);
\item[\textit{(2)}]if \(\gamma=1+2a\), then \(h_2(a,\gamma)=h_1(a,\gamma)=h_\star(a,\gamma)\);
\item[\textit{(3)}]if \(\gamma>1+2a\), then \(h_2(a,\gamma)<h_1(a,\gamma)\) and \(h_2(a,\gamma)>h_\star(a,\gamma)\).
\end{itemize}
Hence \(h_2(a,\gamma)\le \max\{h_1(a,\gamma),h_\star(a,\gamma)\}\) for every \(\gamma\), so the \(h_2\)-term is redundant in the maximum. Therefore, we obtain
\begin{multline}\label{Vstar2}
V_\star(x,z^{(+)},z^{(-)})
=
\max\Bigg\{
\mathbb E\!\left[x-\gamma z^{(+)}\right],\;
\mathbb E\!\left[x-\gamma z^{(-)}\right],\;
\mathbb E\!\Big[
h_1(a,\gamma)\,x-\tfrac{\gamma}{2}\bigl(z^{(+)}+z^{(-)}\bigr)
\Big],\\
\mathbb E\!\Big[
h_\star(a,\gamma)\,x-\tfrac{\gamma}{2}\bigl(z^{(+)}+z^{(-)}\bigr)
\Big]
\Bigg\},
\end{multline}
indicating that $
\mathcal F V_\star(x,z^{(+)},z^{(-)})=V_\star(x,z^{(+)},z^{(-)}).
$
Thus, for \(\gamma>\gamma_\star\coloneqq \tfrac{1}{1-a}\), the exploration term becomes active, and the optimal policy requires a randomized input. 

By Theorem~\ref{thm:l1-dp}\textit{(ii)}, the policy
$
u_t^\star
=
\eta^\star(x_t,z_t^{(+)},z_t^{(-)})
$ in~\eqref{opt-polic}
is optimal for Problem~\ref{pb2}. Substituting the observed histories gives
the corresponding optimal policy for
Problem~\ref{prob:l1-robust-control}:
\begin{align}\label{opt-polic-orig}
    u_t^\star
=
\mu_t^\star(x_0,\ldots,x_t;u_0,\ldots,u_{t-1})
\coloneqq
\eta^\star\!\left(
x_t,\,
\sum_{\tau=0}^{t-1}
\left|x_{\tau+1}-ax_\tau-bu_\tau\right|,
\sum_{\tau=0}^{t-1}
\left|x_{\tau+1}-ax_\tau+bu_\tau\right|
\right).
\end{align}
Evaluating~\eqref{Vstar2} at
\((x_0,z_0^{(+)},z_0^{(-)})=(x_0,0,0)\) gives
\begin{align*}
J_\star(x_0)
=
V_\star(x_0,0,0)
=
\max\left\{
x_0,\,
(1+a)x_0,\,
h_\star(a,\gamma)x_0
\right\}
=
\max\Bigg\{
(1+a)x_0,\,
\tfrac{1-a\gamma}{1-2a}x_0
\Bigg\},
\end{align*}
because \(1+a\geq1\). This value is achieved by the policies above for
Problem~\ref{pb2} and Problem~\ref{prob:l1-robust-control}. This concludes the proof for
\textit{(iv)}.

This completes the proof of all statements in Theorem~\ref{thm:small-a-zero-policy}.
\qedblack
\subsection{Proof of Theorem~\ref{thm3}}\label{thm3-proof}
We split the proof into the two regimes of $a$.
\begin{itemize}
\item[\textit{(i)}]
First suppose $0\leq a<1/2$. In this case the policy applies $u_t=0$ for all $t$.
Therefore the closed loop dynamics are independent of the unknown sign $b$ and
are given by $x_{t+1}=a x_t+w_t$. Since the disturbance is positivity
preserving, $x_t\geq 0$ for all $t$. Iterating the recursion gives
$
x_t=a^t x_0+\sum_{s=0}^{t-1}a^{t-1-s}w_s.
$
Hence, summing over $t\geq 0$ gives the desired $\ell_1$ gain bound 
\[
\sum_{t=0}^{\infty}x_t
\leq
\tfrac{1}{1-a}x_0
+
\underbrace{\tfrac{1}{1-a}}_{\gamma_\star(a)}\sum_{t=0}^{\infty}w_t
\]
\item[\textit{(ii)}]
Now suppose $a\geq 1/2$. The policy applies the CE action
$u_t=-k_t a x_t$, where
$k_t\in\arg\min_{\sigma\in\{+1,-1\}}z_t^{(\sigma)}$, and
\[
z_{t+1}^{(\sigma)}
=
z_t^{(\sigma)}
+
\left|x_{t+1}-a x_t-\sigma u_t\right|,
\qquad
z_0^{(+)}=z_0^{(-)}=0 .
\]
We prove the claim for the true plant $b=+1$; the case $b=-1$ follows by
interchanging the labels $+$ and $-$. Thus
$x_{t+1}=a x_t+u_t+w_t$. 
Let $d_t:=z_t^{(-)}-z_t^{(+)}$. Since the controller chooses a model with
minimal history, $k_t=-1$ implies $d_t\leq 0$, while $k_t=+1$ implies
$d_t\geq 0$, up to tie breaking.
If $k_t=+1$, then $u_t=-a x_t$ and $x_{t+1}=w_t$. If $k_t=-1$, then
$u_t=a x_t$ and $x_{t+1}=2a x_t+w_t$. Hence, writing \(\mathbf{1}_{\{k_t=-1\}}\) for the event that \(k_t=-1\), we have
\[
x_{t+1}=w_t+2a x_t\,\mathbf{1}_{\{k_t=-1\}}.
\]
Next, we compute the recursion for $d_t$. Since the true sign is $b=+1$, the true
dynamics are
$
x_{t+1}=a x_t+u_t+w_t .
$
Therefore the residual of the true model is
$
x_{t+1}-a x_t-u_t=w_t .
$
Since we are considering positive disturbances, $w_t\geq 0$, we obtain
\[
z_{t+1}^{(+)}
=
z_t^{(+)}
+
\left|x_{t+1}-a x_t-u_t\right|
=
z_t^{(+)}+w_t .
\]
For the other candidate sign, the residual is
\[
x_{t+1}-a x_t+u_t
=
w_t+2u_t .
\]
Hence
\[
z_{t+1}^{(-)}
=
z_t^{(-)}+\left|w_t+2u_t\right| .
\]
Now we consider the two possible CE choices. 
\begin{enumerate}
    \item[\textit{(1)}] If $k_t=-1$, then
$u_t=a x_t$, and therefore
\[
z_{t+1}^{(-)}
=
z_t^{(-)}+\left|w_t+2a x_t\right|
=
z_t^{(-)}+w_t+2a x_t .
\]
Thus
\[
d_{t+1}
=
z_{t+1}^{(-)}-z_{t+1}^{(+)}
=
d_t+2a x_t .
\]
     \item[\textit{(2)}] If $k_t=+1$, then $u_t=-a x_t$, and therefore
\[
z_{t+1}^{(-)}
=
z_t^{(-)}+\left|w_t-2a x_t\right| .
\]
Thus
\[
d_{t+1}
=
d_t+\left|w_t-2a x_t\right|-w_t
\geq d_t-w_t .
\]
After a correct step, i.e., $k_t=+1$, case \textit{(2)} gives
$
d_{t+1}\geq d_t-w_t.
$
We now use the switching rule. Since the true sign is \(b=+1\), the correct
model is the \(+\) model. Thus, if \(k_t=+1\), then the controller has selected
the true model. Because the controller selects a model with minimal history
$
z_t^{(+)}\leq z_t^{(-)}.
$
By definition, \(d_t=z_t^{(-)}-z_t^{(+)}\), and hence
$
d_t\geq 0 .
$
Substituting this into the previous inequality yields
\[
d_{t+1}\geq d_t-w_t\geq -w_t .
\]
Equivalently,
$
-d_{t+1}\leq w_t .
$
Finally, on a correct step the CE input $u_t=-a x_t $
\[
x_{t+1}=a x_t+u_t+w_t
=
a x_t-a x_t+w_t
=
w_t .
\]
Hence
$
-d_{t+1}\leq x_{t+1} 
$ or  $d_{t+1}\ge  -x_{t+1} $.
The last inequality says that, after a correct step, the wrong model can be
ahead of the true model by at most the current state \(x_{t+1}\). We now show
that one subsequent wrong step removes this possible advantage.

Suppose the next step is wrong, i.e., \(k_{t+1}=-1\). Since the controller
chooses a model with minimal history, this implies
$
d_{t+1}\leq 0 .
$
Applying case \textit{(1)} at time \(t+1\), we get
\[
d_{t+2}=d_{t+1}+2a x_{t+1}.
\]
Since \(a\geq 1/2\), we have \(2a x_{t+1}\geq x_{t+1}\). Moreover, from the
previous estimate, \(-d_{t+1}\leq x_{t+1}\). Therefore
$
2a x_{t+1}\geq x_{t+1}\geq -d_{t+1}.
$
Substituting this into the recursion for \(d_{t+2}\) gives
\[
d_{t+2}
=
d_{t+1}+2a x_{t+1}
\geq
d_{t+1}-d_{t+1}
=
0 .
\]
Thus, after this wrong step, the wrong model no longer has strictly smaller
history than the true model. Consequently, a wrong step cannot be immediately
followed by another wrong step, except possibly in the tie case \(d_{t+2}=0\).
If \(a>1/2\), or if ties are broken away from the just-penalized model, this
rules out consecutive wrong sign steps.
\end{enumerate}
Hence wrong sign steps are isolated. In other words, if \(k_t=-1\) and
\(t\geq 1\), then necessarily \(k_{t-1}=+1\). Therefore every wrong sign state
except possibly the initial one is preceded by a correct step.
Indeed, if \(k_t=-1\) and \(t=0\), then the corresponding state is simply
\(x_0\). If \(k_t=-1\) and \(t\geq 1\), then \(k_{t-1}=+1\). On the preceding
correct step, the CE input was
$
u_{t-1}=-a x_{t-1}.
$
Hence
\[
x_t
=
a x_{t-1}+u_{t-1}+w_{t-1}
=
a x_{t-1}-a x_{t-1}+w_{t-1}
=
w_{t-1}.
\]
Letting \(\sum_{t:\,k_t=-1}\) denote the sum over all times \(t\) for which \(k_t=-1\), it follows that
\[
\sum_{t:\,k_t=-1}x_t
=
\mathbf 1_{\{k_0=-1\}}x_0
+
\sum_{\substack{t\geq 1:\\ k_t=-1}}x_t
=
\mathbf 1_{\{k_0=-1\}}x_0
+
\sum_{\substack{t\geq 1:\\ k_t=-1}}w_{t-1}.
\]
Since \(w_t\geq 0\), we obtain
$
\sum_{t:\,k_t=-1}x_t
\leq
x_0+\sum_{t=0}^{\infty}w_t .
$
Summing the identity
$
x_{t+1}
=
w_t+2a x_t{\bf 1}_{\{k_t=-1\}}
$
over \(t\geq 0\), we obtain
\[
\sum_{t=0}^{\infty}x_{t+1}
=
\sum_{t=0}^{\infty}w_t
+
2a\sum_{t=0}^{\infty}x_t{\bf 1}_{\{k_t=-1\}} .
\]
Since
$
\sum_{t=0}^{\infty}x_t
=
x_0+\sum_{t=0}^{\infty}x_{t+1},
$
it follows that
\[
\sum_{t=0}^{\infty}x_t
=
x_0
+
\sum_{t=0}^{\infty}w_t
+
2a\sum_{t=0}^{\infty}x_t{\bf 1}_{\{k_t=-1\}} .
\]
Equivalently,
$
\sum_{t=0}^{\infty}x_t
=
x_0
+
\sum_{t=0}^{\infty}w_t
+
2a\sum_{t:\,k_t=-1}x_t .$
Using the previously established bound
$
\sum_{t:\,k_t=-1}x_t
\leq
x_0+\sum_{t=0}^{\infty}w_t,
$
we get
\begin{multline*}
\sum_{t=0}^{\infty}x_t
\leq
x_0
+
\sum_{t=0}^{\infty}w_t
+
2a\left(x_0+\sum_{t=0}^{\infty}w_t\right)
=
x_0+2a x_0
+
\sum_{t=0}^{\infty}w_t
+
2a\sum_{t=0}^{\infty}w_t\\
=
(1+2a)x_0
+
(1+2a)\sum_{t=0}^{\infty}w_t.
\end{multline*}
Therefore, for \(a\geq 1/2\), the CE policy certifies the desired result
\[
\sum_{t=0}^{\infty}x_t
\leq
(1+2a)x_0
+
\underbrace{(1+2a)}_{\gamma_\star(a)}\sum_{t=0}^{\infty}w_t .
\]
\end{itemize}
Combining the two regimes gives the desired result. \qedblack

\end{appendix}

\begingroup
\sloppy
\printbibliography

@report{Annaswamy2023,
  title       = {Control for societal-scale challenges: {Road} map 2030},
  editor      = {A.~M.~{Annaswamy} and K.~J.~{Johansson} Karl H. and Pappas, George J.},
  institution = {IEEE Control Systems Society},
  year        = {2023},
  month       = {May},
}

@book{Ljung1999,
  author    = {L.~{Ljung}},
  title     = {System Identification: {Theory} for the User},
  edition   = {2},
  publisher = {Prentice Hall},
  year      = {1999}
}

@article{Overschee1996,
	title		=	{Continuous-time frequency domain subspace system identification},
	author		=	{P.~{Van Overschee} and B.~{De Moor}},
	journal		=	{Signal Process.},
	volume 		=	{52},
	year		=	{1996},
	pages 		=	{179--194}
}

@article{Markovsky2008,
    title       =   {Data-driven simulation and control},
    author      =   {I.~{Markovsky} and P.~{Rapisarda}},
    journal     =   {Int. J. Control},
    volume      =   {81},
    number      =   {12},
    year        =   {2008},
    pages       =   {1946--1959}
}

@inproceedings{Coulson2019,
    author      =   {J.~{Coulson} and J.~{Lygeros} and F.~{D\"{o}rfler}},
    title       =   {Data-enabled predictive control: {In} the shallows of the {DeePC}},
    booktitle   =   {Eur. Control Conf.},
    pages       =   {307--312},
    year        =   {2019}
}

@article{vanWaarde2023,
    title       =   {The informativity approach: {To} data-driven analysis and control},
    author      =   {H.~J.~{van Waarde} and J.~{Eising} and M.~K.~{Camlibel} and H.~L.~{Trentelman}},
    journal     =   {IEEE Control Syst. Mag.},
    volume      =   {43},
    number      =   {6},
    year        =   {2023},
    pages       =   {32--66}
}

@article{Berberich2021,
	title		=	{Data-driven model predictive control with stability and robustness guarantees},
	author		=	{J.~{Berberich} and J.~{K\"{o}hler} and M.~A.~{M\"{u}ller} and F.~{Allg\"{o}wer}},
	journal		=	{IEEE Trans. Autom. Control},
	volume		=	{66},
	number		=	{4},
	year		=	{2021}
}

@misc{Meijer2025-fd-wfl-arxiv,
	title		=	{From a frequency-domain {Willems}' lemma to data-driven predictive control},
	author		=	{T.~J.~{Meijer} and K.~J.~A.~{Scheres} and S.~A.~N.~{Nouwens} and V.~S.~{Dolk} and W.~P.~M.~H.~{Heemels}},
	year 		=	{2025},
	note 		=	{preprint: \url{https://arxiv.org/abs/2501.19390}}
}

@book{AAstrom2013,
    author  =   {K.~J.~{{\AA}str{\"o}m} and B.~{Wittenmark}},
    year    =   {2013},
    title   =   {Adaptive Control},
    publisher   =   {Courier}
}

@article{Feldbaum1960,
    author  =   {A.~A.~{Feldbaum}},
    title   =   {Dual control theory {I}},
    journal =   {Avtomat. i Telemekh},
    volume  =   {21},
    number  =   {9},
    pages   =   {1240--1249},
    year    =   {1960}
}

@article{Wittenmark1995,
    author  =   {B.~{Wittenmark}},
    year    =   {1995},
    title   =   {Adaptive dual control methods: {An} overview},
    journal =   {IFAC Proc. Vol.},
    volume  =   {28},
    number  =   {13},
    pages   =   {67--72}
}

@article{Mesbah2018,
  author  = {A.~{Mesbah}},
  title   = {Stochastic model predictive control with active uncertainty learning: {A} survey on dual control},
  journal = {Annu. Rev. Control},
  volume  = {45},
  pages   = {107--117},
  year    = {2018}
}

@article{Macarthur1966,
    author  =   {R.~H.~{MacArthur} and E.~R.~{Pianka}},
    title   =   {On optimal use of a patchy environment},
    journal =   {Am. Nat.},
    volume  =   {100},
    number  =   {916},
    pages   =   {603--609},
    year   =   {1966}
}

@article{Cohen2007,
    author  =   {J.~D.~{Cohen} and S.~M.~{McClure}},
    title   =   {Should {I} stay or should {I} go? {How} the human brain manages the trade-off between exploitation and exploration},
    journal =   {Philos. Trans. R. Soc. B Biol. Sci.},
    volume  =   {362},
    number  =   {1481},
    pages   =   {933--942},
    year    =   {2007}
}

@article{Recht2019,
  author  = {B.~{Recht}},
  title   = {A tour of reinforcement learning: {The} view from continuous control},
  journal = {Annu. Rev. Control Robot. Auton. Syst.},
  volume  = {2},
  number  = {1},
  pages   = {253--279},
  year    = {2019}
}

@book{Bellman57,
    author  =   {R.~E.~{Bellman}},
    year    =   {1957},
    title   =   {Dynamic Programming},
    publisher   =   {Princeton Univ. Press}
}

@book{Bertsekas2007,
  author    = {D.~P.~{Bertsekas} and et~al.},
  title     = {Dynamic Programming and Optimal Control, volume {II}},
  publisher = {Athena Scientific},
  edition   = {3rd},
  year      = {2007}
}

@article{Sternby1976,
    author  =   {J.~{Sternby}},
    year    =   {1976},
    title   =   {A simple dual control problem with an analytical solution},
    journal =   {IEEE Trans. Autom. Control},
    volume  =   {21},
    number  =   {6},
    pages   =   {840--844}
}

@article{AAstrom1986,
    author  =   {K.~J.~{{\AA}str{\"o}m} and A.~{Helmersson}},
    year    =   {1986},
    title   =   {Dual control of an integrator with unknown gain},
    journal =   {Comput. Math. Appl.},
    volume  =   {12},
    number  =   {6A},
    pages   =   {653--662}
}

@article{Bernhardsson1989,
    author  =   {B.~{Bernhardsson}},
    year    =   {1989},
    title   =   {Dual control of a first-order system with two possible gains},
    journal =   {Int. J. Adapt. Control Signal Process.},
    volume  =   {3},
    pages   =   {15--22}
}

@article{Kulcsar1996,
    author  =   {C.~{Kulcs{\'a}r} and L.~{Pronzato} and E.~{Walter}},
    year    =   {1996},
    title   =   {Dual control of linearly parameterised models via prediction of posterior densities},
    journal =   {Eur. J. Control},
    volume  =   {2},
    number  =   {2},
    pages   =   {135--143}
}

@article{Zames1981,
  author  = {G.~{Zames}},
  title   = {Feedback and optimal sensitivity: {Model} reference transformations, multiplicative seminorms, and approximate inverses},
  journal = {IEEE Trans. Autom. Control},
  volume  = {26},
  number  = {2},
  pages   = {301--320},
  year    = {1981}
}

@book{Zhou1996,
  author    = {K.~{Zhou} and J.~{Doyle} and K.~{Glover}},
  title     = {Robust and Optimal Control},
  publisher = {Prentice-Hall},
  year      = {1996}
}

@inproceedings{Cusumano1988,
  author    = {S.~{Cusumano} and K.~{Poolla}},
  title     = {Nonlinear feedback vs. linear feedback for robust stabilization},
  booktitle = {IEEE Conf. Decis. Control},
  pages     = {1776--1780},
  year      = {1988}
}

@article{Ioannou1988,
  author  = {P.~{Ioannou} and J.~{Sun}},
  title   = {Theory and design of robust direct and indirect adaptive-control schemes},
  journal = {Int. J. Control},
  volume  = {1988},
  number  = {3},
  pages   = {775--813},
  year    = {1988}
}

@inproceedings{Vinnicombe2004,
  author    = {G.~{Vinnicombe}},
  title     = {Examples and counterexamples in finite {L2}-gain adaptive control},
  booktitle = {Proc. Int. Symp. Math. Theory Netw. Syst.},
  year      = {2004}
}

@inproceedings{Megretski2004,
  author    = {A.~{Megretski}},
  title     = {A nonlinear dynamical game interpretation of adaptive $\ell_2$ control: {Performance} limitations and suboptimal controllers},
  booktitle = {Proc. Int. Symp. Math. Theory Netw. Syst.},
  year      = {2004}
}

@inproceedings{Didinsky1994,
  author    = {G.~{Didinsky} and T.~{{Ba\c{s}ar}}},
  title     = {Minimax adaptive control of uncertain plants},
  booktitle = {IEEE Conf. Decis. Control},
  pages     = {2839--2844},
  year      = {1994}
}

@article{Pan1998,
  author  = {Z.~{Pan} and T.~{{Ba\c{s}ar}}},
  title   = {Adaptive controller design for tracking and disturbance attenuation in parametric strict-feedback nonlinear systems},
  journal = {IEEE Trans. Autom. Control},
  volume  = {43},
  number  = {8},
  pages   = {1066--1083},
  year    = {1998}
}

@inproceedings{Rantzer2021a,
  author    = {A.~{Rantzer}},
  title     = {Minimax adaptive control for a finite set of linear systems},
  booktitle = {Learn. Dyn. Control},
  pages     = {893--904},
  publisher = {PMLR},
  year      = {2021}
}

@inproceedings{Rantzer2025a,
  author    = {A.~{Rantzer}},
  title     = {On minimax optimal dual control for fully actuated systems},
  booktitle = {Amer. Control Conf.},
  pages     = {3993--3997},
  year      = {2025}
}

@inproceedings{Rantzer2025b,
  author    = {A.~{Rantzer}},
  title     = {Minimax optimal adaptive control for systems on cones},
  booktitle = {64th IEEE Conf. Decis. Control},
  pages     = {4137--4139},
  year      = {2025}
}

@misc{Rantzer2026,
	title		=	{Minimax optimal dual control -- {The} single input case},
	author		=	{A.~{Rantzer}},
	year		=	{2026},
	note		=	{preprint: \url{https://arxiv.org/abs/2604.18550}}
}

@inproceedings{Kjellqvist2022a,
  author    = {O.~{Kjellqvist} and A.~{Rantzer}},
  title     = {Learning-enabled robust control with noisy measurements},
  booktitle = {Learn. Dyn. Control Conf.},
  pages     = {86--96},
  publisher = {PMLR},
  year      = {2022}
}

@inproceedings{Kjellqvist2022b,
  author    = {O.~{Kjellqvist} and A.~{Rantzer}},
  title     = {Minimax adaptive estimation for finite sets of linear systems},
  booktitle = {Amer. Control Conf.},
  pages     = {260--265},
  year      = {2022}
}

@inproceedings{Orlov2018,
  author    = {Y.~{Orlov} and A.~{Rantzer} and L.~T.~{Aguilar}},
  title     = {Adaptive ${H}_\infty$ synthesis for linear systems with uncertain parameters},
  booktitle = {IEEE Conf. Decis. Control},
  pages     = {5512--5517},
  year      = {2018}
}

@article{Rantzer2020,
  author  = {A.~{Rantzer}},
  title   = {Minimax adaptive control for state matrix with unknown sign},
  journal = {IFAC-PapersOnLine},
  volume  = {53},
  number  = {2},
  pages   = {58--62},
  year    = {2020}
}

@inproceedings{Jedra2022,
  author    = {Y.~{Jedra} and A.~{Proutiere}},
  title     = {Minimal expected regret in linear quadratic control},
  booktitle = {Int. Conf. Artif. Intell. Stat.},
  pages     = {10234--10321},
  publisher = {PMLR},
  year      = {2022}
}

@inproceedings{Dean2018,
  author    = {S.~{Dean} and H.~{Mania} and N.~{Matni} and B.~{Recht} and S.~{Tu}},
  title     = {Regret bounds for robust adaptive control of the linear quadratic regulator},
  booktitle = {Proc. Adv. Neural Inf. Process. Syst.},
  pages     = {4188--4197},
  year      = {2018}
}

@article{Tsiamis2023,
    author  =   {A.~{Tsiamis} and I.~{Ziemann} and N.~{Matni} and G.~J.~{Pappas}},
    year    =   {2023},
    title   =   {Statistical learning theory for control: {A} finite-sample perspective},
    journal =   {IEEE Control Syst. Mag.},
    volume  =   {43},
    number  =   {6},
    pages   =   {67--97}
}

@inproceedings{matni2019self,
  title={From self-tuning regulators to reinforcement learning and back again},
  author={Matni, Nikolai and Proutiere, Alexandre and Rantzer, Anders and Tu, Stephen},
  booktitle={2019 IEEE 58th Conference on Decision and Control (CDC)},
  pages={3724--3740},
  year={2019},
  organization={IEEE}
}

@inproceedings{Lee2024,
  author    = {B.~{Lee} and A.~{Rantzer} and N.~{Matni}},
  title     = {Nonasymptotic regret analysis of adaptive linear quadratic control with model misspecification},
  booktitle = {Annu. Learn. Dyn. Control Conf.},
  pages     = {980--992},
  publisher = {PMLR},
  year      = {2024}
}

@inproceedings{Simchowitz2020,
  author    = {M.~{Simchowitz} and D.~{Foster}},
  title     = {Naive exploration is optimal for online {LQR}},
  booktitle = {Int. Conf. Mach. Learn.},
  pages     = {8937--8948},
  publisher = {PMLR},
  year      = {2020}
}

@article{Ziemann2020,
  author  = {I.~{Ziemann} and H.~{Sandberg}},
  title   = {On a phase transition of regret in linear quadratic control: {The} memoryless case},
  journal = {IEEE Control Syst. Lett.},
  volume  = {5},
  number  = {2},
  pages   = {695--700},
  year    = {2020}
}

@inproceedings{Cassel2020,
  author    = {A.~{Cassel} and A.~{Cohen} and T.~{Koren}},
  title     = {Logarithmic regret for learning linear quadratic regulators efficiently},
  booktitle = {Int Conf. Mach. Learn.},
  pages     = {1328--1337},
  publisher = {PMLR},
  year      = {2020}
}

@techreport{Megretski2003,
  author      = {A.~{Megretski} and A.~{Rantzer}},
  title       = {Bounds on the optimal $l_2$ gain in adaptive control of a first order linear system},
  institution = {Institut Mittag-Leffler},
  number      = {41},
  year        = {2003}
}

@article{Rantzer2021b,
  author  = {A.~{Rantzer} and M.~E.~{Valcher}},
  title   = {Scalable control of positive systems},
  journal = {Annu. Rev. Control Robot. Auton. Syst.},
  volume  = {4},
  pages   = {319--341},
  year    = {2020}
}

@inproceedings{Rantzer2022,
	title 		=	{Explicit solutions to {Bellman} equation for positive systems with linear cost},
	author		=	{A.~{Rantzer}},
	booktitle 	=	{61st IEEE Conf. Decis. Control},
	pages 		=	{6154--6155},
	year		=	{2022}
}

@article{Ohlin2025,
    title   =   {On exact solutions to the linear {Bellman} equation},
    author  =   {D.~{Ohlin} and R.~{Pates} and M.~{Arcak}},
    journal =   {IEEE Control Syst. Lett.},
    volume  =   {9},
    pages  =   {1568--1573},
    year    =   {2025}
}

@article{blanchini2023optimal,
  title={Optimal control of compartmental models: The exact solution},
  author={Blanchini, Franco and Bolzern, Paolo and Colaneri, Patrizio and De Nicolao, Giuseppe and Giordano, Giulia},
  journal={Automatica},
  volume={147},
  pages={110680},
  year={2023},
  publisher={Elsevier}
}

@article{Gurpegui2023,
	title 		=	{Minimax linear optimal control of positive systems},
	author		=	{A.~{Gurpegui} and E.~{Tegling} and A.~{Rantzer}},
	journal 	=	{IEEE Control Syst. Lett.},
	volume 		=	{7},
	year 		=	{2023},
	pages 		=	{3920--3925}
}

@article{Gurpegui2026,
	title 		=	{Minimax linear regulator problems for positive systems},
	author		=	{A.~{Gurpegui} and M.~{Jeeninga} and E.~{Tegling} and A.~{Rantzer}},
	journal 	=	{IEEE Trans. Autom. Control},
	year 		=	{2026},
    note        =   {Early access}
}

@inproceedings{miller2023data,
  title={Data-driven control of positive linear systems using linear programming},
  author={Miller, Jared and Dai, Tianyu and Sznaier, Mario and Shafai, Bahram},
  booktitle={2023 62nd IEEE Conference on Decision and Control (CDC)},
  pages={1588--1594},
  year={2023},
  organization={IEEE}
}

@inproceedings{shafai2022data,
  title={Data-driven positive stabilization of linear systems},
  author={Shafai, Bahram and Moradmand, Anahita and Siami, Milad},
  booktitle={2022 8th international conference on control, decision and information technologies (CoDIT)},
  volume={1},
  pages={1031--1036},
  year={2022},
  organization={IEEE}
}

@article{Miller2026,
    title       =   {Data-driven control of switched linear positive systems using robust convex optimization},
    author      =   {J.~{Miller} and T.~{Dai} and M.~{Sznaier} and B.~{Shafai}},
    journal     =   {IEEE Trans. Autom. Control},
    year        =   {2026},
    note        =   {Early access}
}

@article{Iwata2025,
    title       =   {Data informativity for analysis and design of positive systems},
    author      =   {T.~{Iwata} and S.-i.~{Azuma} and M.~{Nagahara} and D.~{Peaucelle} and Y.~{Ebihara}},
    journal     =   {IEEE Control Syst. Lett.},
    volume      =   {9},
    pages       =   {2651--2656},
    year        =   {2025}
}

@inproceedings{bencherki2025adaptive,
  title={Adaptive Control of Positive Systems with Application to Learning SSP},
  author={Bencherki, Fethi and Rantzer, Anders},
  booktitle={7th Annual Learning for Dynamics$\backslash$\& Control Conference},
  pages={660--672},
  year={2025},
  organization={PMLR}
}

@article{bencherki2024data,
  title={Data-driven adaptive dispatching policies for processing networks},
  author={Bencherki, Fethi and Rantzer, Anders},
  journal={IEEE Control Systems Letters},
  year={2024},
  publisher={IEEE}
}

@article{Bencherki2026,
  author = {Bencherki, Fethi and Rantzer, Anders},
  title  = {Minimax adaptive control for finite sets of positive linear systems},
  note   = {Preprint: \url{https://arxiv.org/abs/2607.26816}},
  year   = {2026}
}

@inproceedings{vinnicombe2004examples,
  title={Examples and counterexamples in finite L2-gain adaptive control},
  author={Vinnicombe, Glenn},
  booktitle={Leuven: Sixteenth International Symposium on Mathematical Theory of Networks and Systems (MTNS2004)},
  year={2004}
}

@book{bacsar2008h,
  title={H-infinity optimal control and related minimax design problems: a dynamic game approach},
  author={Ba{\c{s}}ar, Tamer and Bernhard, Pierre},
  year={2008},
  publisher={Springer Science \& Business Media}
}
\endgroup

\end{document}